\definecolor{marron}{HTML}{BA2908}
\definecolor{violet}{HTML}{2900BA}
\definecolor{mg}{rgb}   {0.85,  0.,    0.85}
\newtheorem{thm}{Theorem}[section]
\newtheorem{lem}[thm]{Lemma}
\newtheorem{prop}[thm]{Proposition}
\newtheorem{cor}[thm]{Corollary}
\theoremstyle{definition}
\newtheorem{nota}[thm]{Notation}
\newtheorem{ass}[thm]{Assumption}
\theoremstyle{remark}
\renewcommand{\Re}{\operatorname{\textrm{\textup{Re}}}}
\newcommand{\IMS}{\mathsf{loc}}
\newcommand{\Hess}{\mathop{\sf{Hess}}}
\newcommand{\cB}{\mathcal{B}}
\newcommand{\Cr}{\mathcal{X}_{\varepsilon}}
\newcommand{\N}{\mathbb{N}}
\newcommand{\R}{\mathbb{R}}
\newcommand{\rd}{\mathrm{d}}
\newcommand{\re}{\mathrm{e}}
\newcommand{\rS}{\mathrm{S}}
\newcommand{\Dom}{\mathop{\sf Dom}}
\newcommand{\loc}{^{\sf loc}}
\newcommand{\dist}{\rd}
\newcommand{\Spec}{\mathfrak{S}}
\newcommand{\supp}{\operatorname{\textrm{supp}}}
\newcommand{\spa}{\operatorname{\textrm{span}}}
\newcommand{\BB}{\mathbf{B}}
\newcommand{\AAA}{\mathbf{A}}
\newcommand{\sta}{^{\!\textstyle\star}}
\newcommand{\sts}{^{\!\scriptstyle\star}}
\newcommand{\Astar}{\mathbf{A}^{\!\textstyle\star}}
\newcommand{\cP}{\mathcal{P}}
\newcommand{\Fq}{\mathcal{Q}_{h,\Omega}^{\AAA}}
\newcommand{\Fqzz}{\mathcal{Q}_{h}^{\AAA^{\cx_0}}}
\newcommand{\Fqstar}{\mathcal{Q}_{h,\Omega}^{\AAA^{\!\scriptstyle\star}}}
\newcommand{\sA}{\mathsf{A}}
\newcommand{\sN}{\mathsf{N}}
\newcommand{\sP}{\mathsf{P}}
\newcommand{\sQ}{\mathsf{Q}}
\newcommand{\sV}{\mathsf{V}}
\newcommand{\sX}{\mathsf{X}}
\newcommand{\cg}{\mathsf{g}}
\newcommand{\cx}{\mathsf{x}}
\newcommand{\cy}{\mathsf{y}}
\newcommand{\cY}{\mathsf{Y}}
\newcommand{\cL}{\mathcal{L}}
\newcommand{\fs}{\mathfrak{s}}
\newcommand{\ft}{\mathfrak{t}}
\newcommand{\fL}{\mathfrak{L}}
\newcommand{\CBSigma}{\Lambda_1^{\BB}}
\newcommand{\Omegazz}{\cB(\cx_0,r_0)}
\newcommand{\Op}{\mathcal{P}_{h,\Omega}^{\AAA}}
\newcommand{\Opzz}{\mathcal{P}_{h}^{\AAA^{\cx_0}}}
\newcommand{\Oploc}{\mathcal{P}_{h,\Omegazz}^{\AAA}}
\newcommand{\Cims}{K_\mathsf{loc}}
\newcommand{\Dir}{\mathsf{Dir}}
\newcommand{\chifrakUU}{\mathfrak{X}^{[1]}}
\title[On the semiclassical magnetic Laplacian]{On the semiclassical Laplacian with magnetic field\newline  having self-intersecting zero set}
\author[M. Dauge]{Monique Dauge} 
\address{\textbf{M. Dauge}, Univ. Rennes, CNRS, IRMAR - UMR 6625, F-35000 Rennes, France}
\email{\href{mailto:monique.dauge@univ-rennes1.fr}{monique.dauge@univ-rennes1.fr}}
\urladdr{\href{https://perso.univ-rennes1.fr/monique.dauge/}{https://perso.univ-rennes1.fr/monique.dauge/}}
\author[J-P. Miqueu]{Jean-Philippe Miqueu} 
\address{\textbf{J-P. Miqueu}}
\email{\href{mailto:jean-philippe.miqueu@laposte.net}{jean-philippe.miqueu@laposte.net}}
\urladdr{\href{http://www.jean-philippe-miqueu.com}{http://www.jean-philippe-miqueu.com}}
\author[N. Raymond]{Nicolas Raymond}
\address{\textbf{N. Raymond}}
\email{\href{mailto:nicolas.raymond@univ-rennes1.fr}{nicolas.raymond@univ-rennes1.fr}}
\urladdr{\href{http://nraymond.perso.math.cnrs.fr}{http://nraymond.perso.math.cnrs.fr}}
\begin{document}

\keywords{Magnetic Laplacian, Semiclassical asymptotics, Exponential concentration of eigenvectors.}

\subjclass[2010]{35P20, 35Q40, 65M60}

\begin{abstract}
This paper is devoted to the spectral analysis of the Neumann realization of the 2D magnetic Laplacian with semiclassical parameter $h>0$ in the case when the magnetic field vanishes along a smooth curve which crosses itself inside a bounded domain.  We investigate the behavior of its eigenpairs in the limit $h\to0$. We show that each crossing point acts as a potential well, generating a new decay scale of $h^{3/2}$ for the lowest eigenvalues, as well as exponential concentration for eigenvectors around the set of crossing points. These properties are consequences of the nature of associated model problems in $\R^2$ for which the zero set of the magnetic field is the union of two straight lines. In this paper we also analyze the spectrum of model problems when the angle between the two straight lines tends to $0$.
\end{abstract}

\maketitle

\section{Introduction}
\subsection{The magnetic Laplacian}
Let $\Omega$ be a bounded, smooth and simply connected open set of $\R^2$, and $\AAA\in \mathcal{C}^{\infty}(\overline{\Omega},\R^2)$ be a regular potential vector. For $h>0$, we consider the self-adjoint operator
\begin{equation*}
   \Op=(-ih\nabla+\AAA)^2,
\end{equation*}
with domain
\[\Dom(\Op)=\{u \in H^2(\Omega), (-ih\nabla+\AAA)u \cdot \nu  =0 \quad \text{on} \quad \partial\Omega \},\]
where $\nu$ is the outward pointing normal at the boundary of $\Omega$. 

The operator $\Op$ has compact resolvent and is associated with the quadratic form $\Fq$ defined on the form domain $\Dom(\Fq)=H^1(\Omega)$
by
\begin{equation}\label{QF}
\displaystyle{\Fq(u)= \int_{\Omega}|(-ih\nabla+\AAA)u(\cx)|^2\, \mathrm{d}\cx}\,.
\end{equation}
Here $\cx=(x_1,x_2)$ denotes Cartesian coordinates in $\R^2$.
We have the gauge invariance
\begin{equation}\label{gauge}
\re^{-i\phi/h}\left(-ih\nabla+\AAA\right)^2 \re^{i\phi/h}=\left(-ih\nabla+\AAA+\nabla\phi\right)^2\,,
\end{equation} 
for any $\phi\in H^2(\Omega)$. Therefore, the spectrum of $\Op$ only depends on the magnetic field 
\begin{equation}\label{eq:rot}
   \BB=\nabla\times\AAA=\partial_{2}A_{1}-\partial_{1}A_{2}\,.
\end{equation}

\begin{nota}\label{nota:sp}
We denote by $\lambda_{n}(h)$ the $n$-th eigenvalue (with multiplicity) of $\Op$. In all of the paper, $\Spec(\mathcal{P})$ denotes the spectrum of any operator $\mathcal{P}$.
\end{nota}

We are interested in the behavior of the eigenvalues $\lambda_{n}(h)$ and their associated eigenfunctions in the semiclassical limit $h\to0$ for special configurations of the magnetic field $\BB$.

\subsection{Motivations and context}
The spectral analysis of the magnetic Laplacian comes from the theory of superconductivity in which the magnetic Laplacian appears in the study of the third critical field of the Ginzburg-Landau functional (see for instance \cite{SJ70} and also the books \cite{FH} and \cite{R16}, and the references therein). The regime when $h$ goes to $0$ (called the semiclassical limit) is equivalent to the strong magnetic field limit which is often involved in applications. In this paper, we restrict to dimension two.

\subsubsection{Overview of the literature}
In the past two decades, most of the contributions dealt with non-vanishing magnetic fields. We can refer for instance to the works by Bolley \& Helffer \cite{BH97}, Bauman, Phillips \& Tang \cite{BPT98}, 
del Pino, Felmer \& Sternberg \cite{dPFPS00}, Helffer \& Morame \cite{HM01}, Bonnaillie-No\"el \cite{BN05}, Lu \& Pan \cite{LP99}, Raymond \cite{R09}, Bonnaillie-No\"el \& Dauge \cite{BD06},
Bonnaillie-No\"el \& Fournais \cite{BF07}, Raymond \& Vu-Ngoc \cite{RV13}.

The present paper is devoted to the case of vanishing magnetic fields. Such an investigation was initially motivated by a paper of Montgomery \cite{Mont}, followed by the contributions of Helffer \& Morame \cite{HM96}, Helffer \& Kordyukov \cite{HK09, H09}, and Dombrowski \& Raymond \cite{RD}. The aforementioned  papers do not investigate the case when the zero set of the magnetic field $\BB$ intersects the boundary. This was the purpose of the work by Pan \& Kwek \cite{PK} and \cite{artJP}. We can find in \cite{PK} a one term asymptotics of the first eigenvalue $\lambda_1(h)$. The paper \cite{artJP} 
establishes a sharper result by giving an explicit control of the remainder as well as expansions of all the eigenvalues and eigenfunctions, as the semiclassical parameter $h$ goes to $0$, under suitable assumptions when the zero set of $\BB$ does not self-intersect. 

\subsubsection{When the zero set of $\BB$ self-intersects} 
In the present paper, we want to include non-degenerate quadratic cancellations inside the domain, which is a new configuration in the investigations about vanishing magnetic fields.
\begin{ass}\label{hypcadre2}
Let 
\[\Gamma=\{\cx\in\overline{\Omega} : \BB(\cx)=0\}\,,\]
and assume that $\Gamma\neq\emptyset$.
We work under the following assumptions
\begin{enumerate}[label=\roman*)]
\item\label{ii} 
The set $$\Sigma=\{\cx\in\Gamma, \nabla\BB(\cx)=0\}$$ is non-empty, finite and such that  $\partial\Omega\cap\Sigma=\emptyset$.
\item \label{iii} 
For any $\cx\in \Sigma$, the Hessian matrix $\Hess\BB(\cx)$ of the magnetic field at the point $\cx$ has two non-zero eigenvalues with opposite signs.
\item \label{i}The set $\Gamma\cap\partial\Omega$ is finite, and in each of these intersection points, $\Gamma$ is non tangent to $\partial\Omega$.
\end{enumerate}
\end{ass}

Note that assumptions \ref{ii}-\ref{iii} imply that the set $\Gamma$ is a simple curve in a neighborhood of each of its intersection points with $\partial\Omega$. Moreover, the set $\Sigma$ is made of isolated points which are locally the intersection point of two smooth curves.

\begin{nota}
\label{nota:beta}
Choose $\cx_0\in\Sigma$. 
\begin{enumerate}[label=\emph{\roman*)}]
\item $\BB^{\cx_0}$ denotes the second order Taylor expansion of the magnetic field at $\cx_0$. So 
$$\BB^{\cx_0}(\cx) = \tfrac12 (\cx-\cx_0)\Hess\BB(\cx_0)(\cx-\cx_0)^\top.$$ 
\item $\AAA^{\cx_0}$ denotes the Taylor expansion of the magnetic potential $\AAA$ to the third order at the point $\cx_0$. Thus $\BB^{\cx_0}=\nabla\times\AAA^{\cx_0}$.
\item Let $\alpha(\cx_0)$, $\beta(\cx_0)$ be the eigenvalues of $\frac12\Hess\BB(\cx_0)$, agreeing that $|\alpha(\cx_0)|\le |\beta(\cx_0)|$. Set
\[ 
   \varepsilon(\cx_0)=\sqrt{{|\alpha(\cx_0)|}/{|\beta(\cx_0)|}} 
   \quad\mbox{and}\quad
   \Xi(\cx_0)=|\beta(\cx_0)| \,,\]
so that in a suitable local system of orthogonal coordinates $\cy=(s,t)$ centered at $\cx_0$
\[
   \BB^{\cx_0}(\cx) = \alpha(\cx_0)s^2+\beta(\cx_0)t^2 = -\beta(\cx_0) \big(\varepsilon^2(\cx_0)s^2-t^2\big)\,.
\]
\end{enumerate}
\end{nota}
Hence the zero set of $\BB^{\cx_0}$ has the equation $\varepsilon^2(\cx_0)s^2-t^2=0$: It is the union of the two lines $\{t=\pm \varepsilon(\cx_0)s\}$. These lines are the two tangents to the set $\Gamma$ at the point $\cx_0$.

\medskip
The main novelty in this paper is related to the presence of $\Sigma$ and to the role of the following family of model operators indexed by $\varepsilon$ and acting on $L^2(\R^2)$, defined as
\begin{equation}
\label{eq:Xeps}
   \Cr=\Big(D_\sigma-\frac{\tau^3}{3}+\varepsilon^2 \sigma^2 \tau\Big)^2+D_\tau^2,
   \qquad (\sigma,\tau)=:\cY\in\R^2\,,
\end{equation} 
with $D_\sigma = -i\partial_\sigma$ and $D_\tau = -i\partial_\tau$.
Note that the magnetic field associated with the operator $\Cr$ is $\BB(\sigma,\tau)=-\tau^2+\varepsilon^2\sigma^2$. That is why  the operator $\Cr$ for $\varepsilon=\varepsilon(\cx_0)$ will serve as a model magnetic operator at point $\cx_0$.

As a consequence of \cite{HM88,HN}, the operator $\Cr$ has a compact resolvent in $\R^2$. Then it follows that the eigenfunctions of $\Cr$ have an exponential decay (see \cite{Ag82}, \cite{Ag85}, \cite[Theorem B.5.1]{FH} and the example in \cite[p. 100]{theseR}). To sum up:

\begin{prop}\label{2.16aux}
Let $\varepsilon>0$. The spectrum of the operator $\Cr$ is formed by a non-decreasing unbounded sequence of positive eigenvalues denoted by $(\varkappa_{n}(\varepsilon))_{n\geq 1}$.
Moreover, for any eigenfunction $\Psi_{\varepsilon}$ of $\Cr$, there exists $c>0$ such that $\re^{c|Y|}\Psi_{\varepsilon}\in L^2(\R^2)$.
\end{prop}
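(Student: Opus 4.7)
The plan is to read the operator $\Cr$ as a magnetic Laplacian and to invoke two well known machineries: the Helffer--Mohamed compactness criterion for the discreteness of the spectrum, and Agmon estimates for the exponential decay.

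First, I would observe that $\Cr = (-i\nabla + \widetilde{\AAA})^2$ on $L^2(\R^2)$ with $\widetilde\AAA(\sigma,\tau) = (-\tau^3/3+\varepsilon^2\sigma^2\tau,0)$, whose associated magnetic field is $\widetilde\BB(\sigma,\tau) = \partial_\tau \widetilde A_1 = -\tau^2+\varepsilon^2\sigma^2$. The quadratic form
\[
q_\varepsilon(u) = \int_{\R^2}\Big|\Big(D_\sigma - \tfrac{\tau^3}{3}+\varepsilon^2\sigma^2\tau\Big)u\Big|^2 + |D_\tau u|^2\, \rd\sigma\,\rd\tau
\]
is densely defined, nonnegative and closed on its natural form domain, so $\Cr$ is self-adjoint and nonnegative. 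To apply the Helffer--Mohamed criterion (the cited \cite{HM88,HN}) I would check the confinement-by-field condition
\[
|\widetilde\BB(\cY)| + |\nabla\widetilde\BB(\cY)| \longrightarrow +\infty \qquad \text{as }|\cY|\to\infty.
\]
Here $\nabla \widetilde\BB = (2\varepsilon^2\sigma, -2\tau)$, so $|\nabla\widetilde\BB(\cY)|$ already grows linearly in $|\cY|$, and the criterion yields the compactness of the resolvent. The spectrum is therefore a non-decreasing sequence of eigenvalues tending to $+\infty$. Positivity follows because $0$ is not an eigenvalue: $q_\varepsilon(u)=0$ forces $D_\tau u = 0$ and $(D_\sigma-\tau^3/3+\varepsilon^2\sigma^2\tau)u=0$, which cannot hold simultaneously for a nonzero $u\in L^2(\R^2)$ (from the first equation $u$ is $\tau$-independent, and then the second equation imposes a purely imaginary phase factor $\exp(i\tau(\tau^2/3-\varepsilon^2\sigma^2\tau))/\cdots$ that has no square integrable solution).

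For the exponential decay I would use a standard Agmon argument. Fix an eigenpair $(\varkappa_n(\varepsilon),\Psi_\varepsilon)$. For any real Lipschitz function $\phi$ with compactly supported gradient, the IMS-type identity applied to the magnetic form gives
\[
\mathrm{Re}\,\langle \Cr \Psi_\varepsilon, \re^{2\phi}\Psi_\varepsilon\rangle = q_\varepsilon(\re^\phi\Psi_\varepsilon) - \||\nabla\phi|\re^\phi\Psi_\varepsilon\|^2.
\]
Combined with the eigenvalue equation this gives
\[
q_\varepsilon(\re^\phi\Psi_\varepsilon) = \varkappa_n(\varepsilon)\|\re^\phi\Psi_\varepsilon\|^2 + \||\nabla\phi|\re^\phi\Psi_\varepsilon\|^2.
\]
Choosing $\phi(\cY) = c\chi(\cY)|\cY|$ with $\chi$ a cutoff equal to $1$ outside a large ball and $c$ a small constant, and using the diamagnetic/lower-bound estimate that comes with the compactness of the resolvent (the form dominates a confining potential of the type $V(\cY)\to+\infty$), I can absorb the terms $\varkappa_n(\varepsilon)\|\re^\phi\Psi_\varepsilon\|^2 + c^2\|\re^\phi\Psi_\varepsilon\|^2$ by the confining contribution on the exterior of a sufficiently large ball. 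A truncation argument letting $\chi$ exhaust $\R^2$ then yields $\re^{c|\cY|}\Psi_\varepsilon\in L^2(\R^2)$.

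The only delicate step is the last one: obtaining a usable lower bound of the form $q_\varepsilon(v)\geq \int V|v|^2$ with $V\to+\infty$. I expect this to be the main obstacle, but it is exactly what the Helffer--Mohamed / Helffer--Nourrigat theory provides along with the compactness (and what is recorded in \cite[Theorem B.5.1]{FH}), so after invoking these references the decay estimate follows as in the examples cited in \cite{Ag82,Ag85,theseR}.
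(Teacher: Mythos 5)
Your approach matches the paper's, which only gives citations for this Proposition (compactness from Helffer--Mohamed/Helffer--Nourrigat, decay from Agmon-type estimates), so you are really supplying the details the authors leave implicit. The structure is sound: identifying the magnetic field $\widetilde\BB=-\tau^2+\varepsilon^2\sigma^2$, checking the Helffer--Mohamed confinement condition via $|\nabla\widetilde\BB|\to\infty$, and then running the Agmon identity with the weight coming from the same source. Two small remarks. First, the phrase \enquote{the diamagnetic/lower-bound estimate that comes with the compactness of the resolvent} conflates two distinct things: the diamagnetic inequality only yields $q_\varepsilon(u)\ge\|\nabla|u|\|^2$ and does \emph{not} produce a confining potential, and compactness of the resolvent by itself does not imply a quantitative lower bound of the form $q_\varepsilon(u)\gtrsim\int V|u|^2$. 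What actually delivers that bound is \cite[Th\'eor\`eme~(1.1)]{HM88} directly (exactly the estimate recorded later in the paper as Lemma~\ref{lem:gapunif}), so the logical order should be: Helffer--Mohamed gives $\|m_\varepsilon^{1/4}\psi\|^2\lesssim\langle\Cr\psi,\psi\rangle+\|\psi\|^2$ with $m_\varepsilon\to\infty$, which simultaneously gives compactness and the confining bound feeding the Agmon argument. Second, the positivity argument can be shortened: for $u\in L^2(\R^2)$ the single equation $D_\tau u=0$ already forces $u=0$ (a $\tau$-independent function is not square-integrable on $\R^2$ unless it vanishes), so one need not analyse the second factor at all.
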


\subsection{Semiclassical expansions of the magnetic eigenvalues}

The model operators $\Cr$ have the following homogeneity property, due to their magnetic potential of degree $3$. By rescaling, we find immediately:

\begin{lem}
\label{lem:scal}
Set $\AAA_\varepsilon:= (-\frac{\tau^3}{3}+\varepsilon^2 \sigma^2 \tau,0)$ the magnetic potential of $\Cr$. Let $(\varkappa,\Psi)$ be a normalized eigenpair of $\Cr$. Setting for $h>0$ and $\Xi>0$
\[
   \psi_h(\cy) = \Xi^{1/4}h^{-1/4}\Psi(\Xi^{1/4}h^{-1/4}\cY)
\]
we obtain that $(h^{3/2}\,\Xi^{1/2}\varkappa,\,\psi_h)$ is a normalized eigenpair for the semiclassical magnetic operator $(-ih\nabla+\Xi\AAA_\varepsilon)^2$ on $\R^2$.
\end{lem}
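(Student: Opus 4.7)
The proof will be a direct scaling computation. The key observation is that $\AAA_\varepsilon$ is positively homogeneous of degree $3$: for any $\lambda>0$,
\[
   \AAA_\varepsilon(\lambda\cY) = \lambda^3\,\AAA_\varepsilon(\cY),
\]
since each component is a homogeneous polynomial of degree $3$ in $(\sigma,\tau)$ (the first being $-\tau^3/3+\varepsilon^2\sigma^2\tau$, the second being zero). This is exactly what makes a single scaling match the derivative term and the potential term simultaneously.

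The plan is to introduce the dilation $\tilde\cY = \lambda \cY$ with $\lambda=\Xi^{1/4}h^{-1/4}$, so that $\psi_h(\cY)=\lambda\,\Psi(\tilde\cY)$ (the prefactor $\lambda=\Xi^{1/4}h^{-1/4}$ ensures the $L^2$-normalization, since $\|\psi_h\|_{L^2(\R^2_\cY)}^2=\lambda^2\lambda^{-2}\|\Psi\|_{L^2(\R^2_{\tilde\cY})}^2=1$ after the Jacobian $\rd\cY=\lambda^{-2}\rd\tilde\cY$ is taken into account). Under this change of variable $\nabla_\cY=\lambda\nabla_{\tilde\cY}$, and by the degree-$3$ homogeneity
\[
   -ih\nabla_\cY + \Xi\AAA_\varepsilon(\cY)
   \;=\; -ih\lambda\nabla_{\tilde\cY} + \Xi\lambda^{-3}\AAA_\varepsilon(\tilde\cY).
\]
With the choice $\lambda=(\Xi/h)^{1/4}$, one checks $h\lambda=\Xi\lambda^{-3}=h^{3/4}\Xi^{1/4}$, so the right-hand side factors as
\[
   h^{3/4}\Xi^{1/4}\bigl(-i\nabla_{\tilde\cY}+\AAA_\varepsilon(\tilde\cY)\bigr).
\]

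Squaring this identity gives
\[
   (-ih\nabla_\cY+\Xi\AAA_\varepsilon(\cY))^2 \;=\; h^{3/2}\Xi^{1/2}\bigl(-i\nabla_{\tilde\cY}+\AAA_\varepsilon(\tilde\cY)\bigr)^2,
\]
and the operator on the right-hand side is precisely $\Cr$ acting in the $\tilde\cY$-variable. Applying it to $\psi_h(\cY)=\lambda\,\Psi(\tilde\cY)$ and using $\Cr\Psi=\varkappa\Psi$ produces the eigenvalue $h^{3/2}\Xi^{1/2}\varkappa$, as announced. Normalization has already been handled by the prefactor, and the exponential decay of $\Psi$ provided by Proposition~\ref{2.16aux} guarantees $\psi_h\in L^2(\R^2)$ (in fact in the domain of the semiclassical operator).

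There is no real obstacle here: the only point to check with care is the bookkeeping of the exponents, ensuring that the single parameter $\lambda=(\Xi/h)^{1/4}$ simultaneously balances the $\nabla$ term (one power of $\lambda$) with the magnetic potential term (three inverse powers from the cubic homogeneity), and that the $L^2$-normalization prefactor $\Xi^{1/4}h^{-1/4}$ exactly compensates the Jacobian of the dilation.
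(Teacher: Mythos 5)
Your proof is correct and is exactly the rescaling argument the paper has in mind: the paper simply says ``By rescaling, we find immediately'' before stating the lemma, and your computation — exploiting the degree-$3$ homogeneity of $\AAA_\varepsilon$ so that the single dilation parameter $\lambda=(\Xi/h)^{1/4}$ balances $h\lambda=\Xi\lambda^{-3}=h^{3/4}\Xi^{1/4}$, together with the $L^2$-normalizing prefactor — is precisely that rescaling written out in full.
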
 
After the scale $h$ for non-vanishing magnetic fields, the scale $h^{4/3}$ for magnetic field vanishing at order 1, we note the apparition of the new scale $h^{3/2}$.

Since for each crossing point $\cx_0\in\Sigma$ the operator $(-ih\nabla+\Xi(\cx_0)\AAA_{\varepsilon(\cx_0)})^2$ is unitarily equivalent to the ``tangent'' magnetic operator $(-ih\nabla+\AAA^{\cx_0})^2$, we can guess  that the behavior of the low lying spectrum of the operator $\Op$ corresponds to the low lying spectrum of the operator
\begin{equation}
\label{eq:OpSigma}
  \mathcal{P}_{h}^{\AAA,\Sigma} := \underset{\cx\in \Sigma}{\bigoplus} 
   \big(-ih\nabla+\Xi(\cx)\AAA_{\varepsilon(\cx)}\big)^2,
\end{equation}
on $L^2(\R^2)^{\sharp \Sigma}$, where $\sharp \Sigma$ is the cardinal of the finite set $\Sigma$. Lemma \ref{lem:scal} then gives that
\[
   \Spec(\mathcal{P}_{h}^{\AAA,\Sigma}) = 
   \underset{\cx\in \Sigma}{\coprod} \:h^{3/2}\,\Xi(\cx)^{1/2}  \Spec(\mathcal{X}_{\varepsilon(\cx)}).
\]
This leads to introduce, for all $\cx\in \Sigma$ and all $n\in\mathbb{N}^*$, the enumeration of the eigenvalues
\begin{subequations}
\begin{equation}
\label{eq:Lam1}
   \Lambda_n^{\cx}=\Xi(\cx)^{1/2}\varkappa_n(\varepsilon(\cx))\,,
\end{equation} 
and the ordered set 
\begin{equation}
\label{eq:Lam2}
   \left\{\Lambda_\mathfrak{n}^{\BB }, \mathfrak{n} \in\mathbb{N}^*\right\}=
   \underset{\cx\in \Sigma}{\coprod}\left\{\Lambda_n^{\cx}, n\in\mathbb{N}^*\right\},
\end{equation}
\end{subequations}
for which the same value can possibly appear several times (for instance, if we have $\Lambda_n^{\cx}=\Lambda^{\cx'}_{n'}$ for $(n,\cx)\neq (n',\cx')$). Note in particular that the smallest element of this set is given by
\begin{equation}
\label{eq:CBSigma}
   \CBSigma=\min_{\cx\in\Sigma} \ \Xi(\cx)^{1/2}\varkappa_1(\varepsilon(\cx))\,,
\end{equation}
where we recall that $\varkappa_1(\varepsilon)$ is the first eigenvalue of $\Cr$ and $\varepsilon(\cx)$ is defined in Notation \ref{nota:beta}.
 
We are ready to state the main two results relating to the low lying spectrum of the operator $\Op$. The first result provides a localized estimate from below of the energy functional $\Fq$ (defined in \eqref{QF}) and exponential decay estimates for the eigenvectors of $\Op$. The second result states an asymptotic expansion for the eigenvalues of $\Op$.

\subsubsection{Estimate from below of the energy and Agmon estimates}
Let the function $\cx\mapsto \dist(\cx,\Sigma)$ be the Euclidean distance between the point $\cx\in \overline{\Omega}$ and the set $\Sigma$.

\begin{thm}\label{thmequcroixINTRO}
Under Assumption \textup{\ref{hypcadre2}}, for any exponent $d\in(\frac{3}{16},\frac{1}{4})$, there exist $\mathrm{c}_d,\:\mathrm{C}_d>0$, $h_{d}>0$, such that for all $u\in \Dom(\Fq)$ and all $h\in(0,h_{d})$,
\begin{subequations}
\begin{equation}
\label{eq:bfb}
   \Fq(u)\ge \int_{\Omega} \mathcal{I}_{h,d}^{\Sigma}(\cx) \, |u(\cx)|^2\,\rd\cx,
\end{equation}
with
\begin{equation}
\label{eq:Ibfb}
   \mathcal{I}_{h,d}^{\Sigma}(\cx)
   =\left\{\begin{array}{ll}
   \CBSigma h^{3/2} - \mathrm{C}_d\, h^{\min\{2-2d,\,3/4+4d\}} &\mbox{if}\quad \dist(\cx,\Sigma)\le  h^{d} 
   \\[0.5ex]
   \mathrm{c}_d \,h^{4/3+2d/3} &\mbox{if}\quad \dist(\cx,\Sigma)> h^{d}\,,
   \end{array}\right.
\end{equation}
\end{subequations}
where $\CBSigma$ is defined in \eqref{eq:CBSigma}. Note that the exponents present in the definition of $\mathcal{I}_{h,d}^{\Sigma}$ satisfy
\begin{equation}
\label{eq:Iexp}
   \min\{2-2d,\,\tfrac34+4d\}>\tfrac32 \quad\mbox{and}\quad \tfrac43+\tfrac{2d}{3}<\tfrac32\,.
\end{equation}
\end{thm}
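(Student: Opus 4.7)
The strategy is a magnetic IMS localisation at scale $h^d$ around the finite set $\Sigma$, coupled with two different local lower bounds: near each crossing point, the tangent model of Lemma~\ref{lem:scal} gives the $\CBSigma h^{3/2}$ scale; far from $\Sigma$, the field vanishes at most to first order and the $h^{4/3}$ analysis of~\cite{artJP} applies, now with an effective gradient that degenerates like $\dist(\cdot,\Sigma)$.

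\emph{Step 1 (IMS partition of unity).} Using that $\Sigma$ is finite and disjoint from $\partial\Omega$, I construct a quadratic partition of unity consisting of cutoffs $\chi_{\cx_0}$, $\cx_0\in\Sigma$, supported in $\cB(\cx_0,2h^d)$ and equal to $1$ on $\cB(\cx_0,h^d)$, together with $\chi_{\mathrm{out}}=\sqrt{1-\sum_{\cx_0}\chi_{\cx_0}^2}$. Gradients are of size $h^{-d}$, so the magnetic IMS formula yields
\[
\Fq(u) = \sum_{\cx_0\in\Sigma}\Fq(\chi_{\cx_0}u) + \Fq(\chi_{\mathrm{out}}u) - O(h^{2-2d})\|u\|^2.
\]

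\emph{Step 2 (inner estimate).} Fix $\cx_0\in\Sigma$ and set $v=\chi_{\cx_0}u$, compactly supported inside $\Omega$. Using the gauge invariance~\eqref{gauge}, I may assume that $\AAA^{\cx_0}$ is in the canonical form of Notation~\ref{nota:beta}, so that $\AAA-\AAA^{\cx_0}=O(|\cx-\cx_0|^4)=O(h^{4d})$ on $\supp v$. The elementary inequality $|a+b|^2\ge(1-\eta)|a|^2-\eta^{-1}|b|^2$ applied pointwise, together with Lemma~\ref{lem:scal} and the min-max principle for the tangent operator $(-ih\nabla+\AAA^{\cx_0})^2$ on $\R^2$, gives
\[
\Fq(v) \ge (1-\eta)\,\CBSigma\, h^{3/2}\|v\|^2 - C\eta^{-1}h^{8d}\|v\|^2.
\]
Optimising with $\eta=h^{4d-3/4}$, which is admissible exactly when $d>3/16$, collapses both errors to $h^{3/4+4d}$, yielding
\[
\Fq(\chi_{\cx_0}u) \ge \bigl(\CBSigma\, h^{3/2} - Ch^{3/4+4d}\bigr)\|\chi_{\cx_0}u\|^2.
\]

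\emph{Step 3 (outer estimate).} On $\supp\chi_{\mathrm{out}}$ one has $\dist(\cx,\Sigma)\ge h^d$, so $\BB$ vanishes at most linearly: along $\Gamma\setminus\Sigma$ the Hessian condition gives $|\nabla\BB(\cx)|\ge c\,\dist(\cx,\Sigma)\ge c\,h^d$, while away from $\Gamma$ the field $|\BB|$ itself is bounded below. A second localisation at a sufficiently small auxiliary scale, together with a direct application of the first-order-vanishing analysis from~\cite{artJP} (whose effective spectral prefactor is $|\nabla\BB|^{2/3}$), produces
\[
\Fq(\chi_{\mathrm{out}}u) \ge \mathrm{c}_d\, h^{4/3+2d/3}\|\chi_{\mathrm{out}}u\|^2.
\]
Combining Steps~1--3 and using $\sum_{\cx_0}\chi_{\cx_0}^2+\chi_{\mathrm{out}}^2\equiv 1$, with $\chi_{\cx_0}\equiv 1$ on $\{\dist(\cdot,\Sigma)\le h^d\}$, reconstructs the pointwise integrand $\mathcal{I}_{h,d}^\Sigma$ after absorbing the uniform error $\mathrm{C}_dh^{\min\{2-2d,\,3/4+4d\}}$ into the (larger) inner term.

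\emph{Main obstacle.} The critical balancing act is Step~2: the fourth-order Taylor remainder of $\AAA$ on a ball of radius $h^d$ produces an error $\eta^{-1}h^{8d}$ that must be balanced against $\eta h^{3/2}$, which is what forces $d>3/16$ and generates the exponent $3/4+4d$. Combined with the IMS cost $h^{2-2d}$ (demanding $d<1/4$), these two competing constraints pin down exactly the admissible interval $d\in(3/16,1/4)$ and the $\min$ appearing in~\eqref{eq:Ibfb}. Step~3 is more routine but requires care in handling the transition where $|\nabla\BB|$ degenerates, which is why the effective scale $h^{4/3}$ is multiplied by the degraded factor $h^{2d/3}$.
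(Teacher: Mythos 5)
Your proposal is correct and follows essentially the same strategy as the paper's: an IMS localization at scale $h^d$ near $\Sigma$, a perturbative estimate there via the tangent model, and a separate lower bound away from $\Sigma$ using the first-order-vanishing analysis. Step~2 (the inner estimate) reproduces the paper's Lemma~\ref{lem:eq:12} almost verbatim, including the optimal choice $\eta=h^{4d-3/4}$ and the origin of the constraint $d>3/16$. The one structural difference is superficial: the paper splits the outer region into a fixed-size far region $\Sigma^{[1]}$ (where \cite[Theorem 1.9]{artJP} gives $c_1h^{4/3}$ directly) and an $h$-dependent annulus $\Sigma^{[2]}(h)$, whereas you collapse them into a single $\chi_{\mathrm{out}}$; since both of the paper's outer bounds dominate $c h^{4/3+2d/3}$, this is harmless.

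Where your write-up is thin is Step~3. The phrase ``a second localisation at a sufficiently small auxiliary scale'' hides the actual content of the paper's Lemma~\ref{lem:eq:8b}: one must choose a scale $h^{\rho}$ (the paper takes $\rho=\tfrac14$) and then distinguish two sub-cases, balls of radius $\sim h^{\rho}$ centered \emph{on} $\Gamma$ where \cite[Lemma 2.3]{artJP} gives $\tfrac12 M_0|\nabla\BB(\cx_j)|^{2/3}h^{4/3}-O(h^{6\rho})$, versus balls \emph{off} $\Gamma$ where one uses $|\BB|\,h$ with $|\BB|\gtrsim h^{\rho+d}$ (not a constant, as ``bounded below'' might suggest; it degenerates near $\Gamma$). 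One must then verify that $\rho$ can be chosen so that all three errors $h^{6\rho}$, $h^{1+\rho+d}$ and $h^{2-2\rho}$ are negligible compared with $h^{4/3+2d/3}$, which forces $\rho\in(\tfrac{2}{9}+\tfrac{d}{9},\,\tfrac13-\tfrac{d}{3})$, a nonempty interval precisely when $d<\tfrac14$. These constraints are what your ``routine but requires care'' remark is gesturing at; spelling them out is necessary to close the argument, but none of them invalidates your plan. The final reassembly into the pointwise integrand $\mathcal{I}_{h,d}^{\Sigma}$ also deserves a sentence explaining how the global IMS error $h^{2-2d}\|u\|^2$ is split between the two zones using a characteristic function, but the absorption you describe is exactly what the paper does.
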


Agmon estimates are an \textit{a priori} result of exponential decay 
of the eigenfunctions. The following theorem states that the eigenfunctions associated with eigenvalues of order $h^{4/3+2d/3}$ are localized near $\Sigma$.

\begin{thm}\label{agmon_gammad}
Let $L>0$ and $d\in(\frac{3}{16},\frac{1}{4})$. There exist $C$, $h_{d}>0$ such that for all $h\in(0,h_{d})$, and all eigenpair $(\lambda(h),\psi_{h})$ of $\Op$ with $\lambda(h)\le L h^{3/2}$, we have
\begin{equation}
\label{eq:Ag}
   \int_{\Omega} \re^{2\dist(\cx,\Sigma)/h^d}|\psi_{h}(\cx)|^2\,\rd\cx +
   h^{-3/2} \Fq\big(\re^{\dist(\cdot,\Sigma)/h^d}\psi_{h}\big)
   \le C\| \psi_{h} \|^2\,.  
\end{equation}
\end{thm}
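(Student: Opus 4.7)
The plan is the classical weighted-energy (Agmon) argument, with Theorem \ref{thmequcroixINTRO} supplying the essential lower bound on the form. Set $\Phi(\cx):=\dist(\cx,\Sigma)$, which is Lipschitz with $|\nabla\Phi|=1$ almost everywhere, and for an eigenpair $(\lambda(h),\psi_{h})$ satisfying $\lambda(h)\le L h^{3/2}$ introduce the weighted function $v_h:=\re^{\Phi/h^d}\psi_{h}$. Since $\Sigma\subset\Omega$ is a finite set of interior points, truncating $\Phi$ at a level $M$ and letting $M\to\infty$ shows that $v_h\in\Dom(\Fq)=H^1(\Omega)$.

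The first step is the standard IMS-type identity for the magnetic form, obtained by expanding $\Fq(\chi\psi_h)$ with $\chi=\re^{\Phi/h^d}$ and using the eigenvalue equation:
\begin{equation*}
\Fq(v_h)=\lambda(h)\|v_h\|^2+h^{2-2d}\int_\Omega|\nabla\Phi|^2\,|v_h|^2\,\rd\cx=\bigl(\lambda(h)+h^{2-2d}\bigr)\|v_h\|^2,
\end{equation*}
where I have used $|\nabla\Phi|=1$ a.e.

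The second step applies Theorem \ref{thmequcroixINTRO} to $v_h$ and splits the integral into the near region $\{\dist(\cx,\Sigma)\le h^d\}$ and the far region $\{\dist(\cx,\Sigma)>h^d\}$. Writing $N_h$ and $F_h$ for the respective $L^2$-masses of $v_h$, the exponent estimates \eqref{eq:Iexp} combined with $\lambda(h)+h^{2-2d}\le(L+1)h^{3/2}$ for $h$ small enough give
\begin{equation*}
\tfrac12\CBSigma\,h^{3/2}N_h+\mathrm{c}_d\,h^{4/3+2d/3}F_h\le\Fq(v_h)\le(L+1)\,h^{3/2}(N_h+F_h).
\end{equation*}
Since $4/3+2d/3<3/2$, for $h$ small the coefficient of $F_h$ on the left dominates the coefficient on the right, so $F_h\le C'h^{1/6-2d/3}N_h$ and in particular $\|v_h\|^2\le 2N_h$. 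In the near region one has $\re^{2\Phi/h^d}\le \re^2$, hence $N_h\le \re^2\|\psi_h\|^2$, which yields the first term of \eqref{eq:Ag}. The bound on $h^{-3/2}\Fq(v_h)$ then follows directly from the identity of Step~1.

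The only real subtlety, rather than a deep obstacle, is the rigorous justification of the IMS identity when $\Phi$ is only Lipschitz: one approximates $\chi=\re^{\Phi/h^d}$ by smooth bounded truncations, applies the identity for each approximant, and passes to the limit by monotone convergence. The rest is bookkeeping on the three exponents $2-2d$, $3/4+4d$, $4/3+2d/3$, and the restriction $d\in(\tfrac{3}{16},\tfrac14)$ is precisely what ensures $4/3+2d/3<3/2<\min\{2-2d,\,3/4+4d\}$, so that the near-region correction is negligible compared to $\CBSigma h^{3/2}$ while the far-region term dominates in the absorption argument.
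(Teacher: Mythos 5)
Your proof is correct and follows essentially the same route as the paper: the weighted energy identity \eqref{agmonIDd}, the lower bound of Theorem \ref{thmequcroixINTRO} applied to $v_h$ split over $Z^{\sf near}_h$ and $Z^{\sf far}_h$, and the absorption argument driven by $4/3+2d/3<3/2<\min\{2-2d,3/4+4d\}$. The only cosmetic difference is that the paper folds the $h^{-d}$ factor into the weight $\Phi$ itself, whereas you keep $\Phi=\dist(\cdot,\Sigma)$ and carry the $h^{-d}$ explicitly; your explicit tracking of the factor $\re^2$ on the near region is a small improvement over the paper, which silently absorbs it into the constant $C$.
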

\noindent
In fact estimate \eqref{eq:Ag} would also hold for the relaxed condition $\lambda(h)\le L h^{4/3+2d/3}$ if $L<\mathrm{c}_d$.

\subsubsection{Expansion of lowest eigenvalues}
Let us recall that
\[
   \lambda_1(h)\le\lambda_2(h)\le\ldots\le\lambda_n(h)\ldots
\]
denote the increasing sequence of the eigenvalues of the operator $\Op$ while
\[
   \Lambda_1^{\BB}\le\Lambda_2^{\BB}\le\ldots\le\Lambda_n^{\BB}\ldots
\]
denote the elements defined in \eqref{eq:Lam1}-\eqref{eq:Lam2} from the eigenvalues of the model operators $\Cr$.

\begin{thm}\label{thm:asy1}
Under Assumption \textup{\ref{hypcadre2}}, for all $N\in \mathbb{N}^*$, there exist $C_N>0$ and $h_0>0$ such that, for all $h\in (0,h_0)$ and all $n=1,\ldots,N$
\[
   |\lambda_n(h)-h^{3/2}\Lambda_n^{\BB}| \le C_N h^{7/4}\,.
\]
\end{thm}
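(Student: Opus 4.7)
The plan is to combine a quasimode construction (for the upper bound) with an Agmon-localized reduction to the model operators (for the lower bound), and then conclude by the min-max principle applied on both sides.

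\textbf{Upper bound.} For each crossing point $\cx_0 \in \Sigma$ and each index $n\in\mathbb{N}^*$, I take an $L^2$-normalized eigenpair $(\varkappa_n(\varepsilon(\cx_0)),\Psi_{n,\cx_0})$ of $\mathcal{X}_{\varepsilon(\cx_0)}$, and use Lemma~\ref{lem:scal} to rescale $\Psi_{n,\cx_0}$ into a function $\psi_{h,n,\cx_0}$ on $\R^2$ that is a normalized eigenfunction of $(-ih\nabla+\Xi(\cx_0)\AAA_{\varepsilon(\cx_0)})^2$ with eigenvalue $h^{3/2}\Lambda_n^{\cx_0}$. After a rotation/translation aligning $\cy$-coordinates with $\cx_0$, I multiply by a smooth cutoff $\chi_{\cx_0}$ supported in a ball of radius $h^d$ (for some $d\in(\tfrac{3}{16},\tfrac14)$) and apply a gauge transformation $\re^{i\phi_{\cx_0}/h}$ chosen so that the gauge of $\AAA^{\cx_0}$ matches $\AAA$ near $\cx_0$ up to an $O(|\cx-\cx_0|^4)$ perturbation. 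The resulting functions $\widetilde{\psi}_{h,n,\cx_0}$ have pairwise disjoint supports (so they are orthogonal in $L^2(\Omega)$). Plugging into $\Fq$, two error sources appear: the cutoff commutator, which is exponentially small by Proposition~\ref{2.16aux}; and the Taylor remainder $\AAA-\AAA^{\cx_0}=O(|\cx-\cx_0|^4)$, which after the scaling $\cx-\cx_0\sim h^{1/4}$ has $L^\infty$ size $O(h)$ on the support of the quasimode, contributing cross-terms of size $O(h)\cdot O(h^{3/4})=O(h^{7/4})$ in the quadratic form. The min-max principle then yields $\lambda_n(h)\le h^{3/2}\Lambda_n^{\BB}+C h^{7/4}$.

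\textbf{Lower bound.} Conversely, let $\psi_h^{(1)},\ldots,\psi_h^{(N)}$ be orthonormal eigenfunctions of $\Op$ associated with $\lambda_1(h),\ldots,\lambda_N(h)$. Since $\lambda_N(h)\le C_N h^{3/2}$ (by the upper bound already proved), Theorem~\ref{agmon_gammad} applies and gives uniform exponential concentration near $\Sigma$ at scale $h^d$. I choose a partition of unity $\{\chi_{\cx_0}\}_{\cx_0\in\Sigma}$ supported in disjoint balls of radius $h^d$ around each crossing point, together with an exterior cutoff $\chi_\infty$, and apply the IMS formula. On each ball, a local gauge transformation replaces $\AAA$ by $\AAA^{\cx_0}$ up to the Taylor remainder controlled as above, and the scaling of Lemma~\ref{lem:scal} maps the local piece $\chi_{\cx_0}\re^{-i\phi_{\cx_0}/h}\psi_h^{(j)}$ to a function $v_{\cx_0}^{(j)}$ on $\R^2$ whose Rayleigh quotient for $\mathcal{X}_{\varepsilon(\cx_0)}$ is controlled by $h^{-3/2}\Xi(\cx_0)^{-1/2}\bigl[\lambda_j(h)+O(h^{7/4})\bigr]$. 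Summing over $\cx_0$ and using Agmon decay (so that the $\chi_\infty$-part and the IMS commutators are negligible compared to $h^{7/4}$), the linear map $\psi_h^{(j)}\mapsto (v_{\cx_0}^{(j)})_{\cx_0\in\Sigma}$ is quasi-isometric modulo $O(\re^{-c/h^d})$, hence injective for $h$ small. Its image is thus an $N$-dimensional subspace on which the quadratic form of $\mathcal{P}_h^{\AAA,\Sigma}$ is bounded by $\lambda_N(h)+O(h^{7/4})$. The min-max principle applied to $\mathcal{P}_h^{\AAA,\Sigma}$, whose $n$-th eigenvalue is $h^{3/2}\Lambda_n^{\BB}$, gives $h^{3/2}\Lambda_n^{\BB}\le \lambda_n(h)+C_N h^{7/4}$ for all $n\le N$.

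\textbf{Main obstacle.} The delicate step is the lower bound: one needs to verify that the Taylor remainder and IMS commutators can all be absorbed into the $O(h^{7/4})$ error, which requires the exponent range $d\in(\tfrac{3}{16},\tfrac14)$ of Theorem~\ref{agmon_gammad} (so that localization errors $h^{2-2d}$, $h^{3/4+4d}$ in \eqref{eq:Iexp} beat $h^{3/2}$), together with careful bookkeeping of the cross terms $2\Re\langle(\AAA-\AAA^{\cx_0})(-ih\nabla+\AAA^{\cx_0})u,u\rangle$ using the Agmon weighted bound on $\Fq(\re^{\dist(\cdot,\Sigma)/h^d}\psi_h)$ provided by \eqref{eq:Ag}. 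Injectivity of the reduction map also has to be made quantitative, but this is a consequence of the same Agmon estimate. Once all these pieces are in place, combining the two bounds yields $|\lambda_n(h)-h^{3/2}\Lambda_n^{\BB}|\le C_N h^{7/4}$ as required.
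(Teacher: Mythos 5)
Your upper-bound argument is essentially the paper's and is correct: the quasimodes are built from the \emph{model} eigenfunctions, which decay exponentially at scale $h^{1/4}$, so the Taylor remainder $\AAA-\AAA^{\cx_0}=O(|\cx-\cx_0|^4)$ contributes $\|(\AAA-\AAA^{\cx_0})\psi\|=O(h)$ and the cross term in \eqref{eq:diff} is $O(h^{3/4}\cdot h)=O(h^{7/4})$. The min-max principle then gives $\lambda_n(h)\le h^{3/2}\Lambda_n^{\BB}+Ch^{7/4}$.

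The lower bound, however, has a genuine gap. Your approach localizes the \emph{true} eigenfunctions $\psi_h^{(j)}$ of $\Op$ using the Agmon estimates of Theorem \ref{agmon_gammad}, which hold at scale $h^d$ only for $d\in(\tfrac{3}{16},\tfrac14)$ \emph{strictly}. Via Lemma \ref{lem:Ag+}, this gives $\|(\AAA-\AAA^{\cx_0})\chi_{\cx_0}\psi_h^{(j)}\|=O(h^{4d})$, so the cross term in \eqref{eq:diff} is $O(h^{3/4+4d})$; similarly the IMS commutator contributes $O(h^{2-2d})$. Since $d<\tfrac14$, both exponents satisfy $3/4+4d<7/4$ and $2-2d<7/4$ (for $d>1/8$), so these errors are \emph{strictly larger} than $h^{7/4}$. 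You remark that the exponent range $d\in(\tfrac{3}{16},\tfrac14)$ makes these errors ``beat $h^{3/2}$'', which is true, but beating $h^{3/2}$ is much weaker than being $O(h^{7/4})$: the best your method can yield on the lower side is $\lambda_n(h)\ge h^{3/2}\Lambda_n^{\BB}-C_Nh^{3/2+\delta}$ with $\delta=\min\{1/4+4d,\,1/2-2d\}<1/4$. This is precisely the asymmetric intermediate estimate \eqref{eq:asy1} of the paper, and it does \emph{not} establish the theorem.

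To close the gap, the paper does not rely on the Agmon localization of the true eigenfunctions at all for the sharp lower bound. Instead, it builds formal power-series quasimodes $\psi_h^{[m]}=\chi_{\cx_0}\sum_{j\le m}h^{j/4}\Psi_j(\cdot/h^{1/4})$ for $\Op$, which decay exponentially at scale $h^{1/4}$ (being built from model eigenfunctions), giving $\|\Op\psi_h^{[m]}-\ell^{[m]}(h)\psi_h^{[m]}\|\le C_m h^{3/2+(m+1)/4}\|\psi_h^{[m]}\|$. The spectral theorem then shows that $\Op$ has eigenvalues within $O(h^{3/2+(m+1)/4})$ of each $\ell^{[m]}(h)$, and the coarse ordering information from \eqref{eq:asy1} identifies these with the $\lambda_n(h)$. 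Theorem \ref{thm:asy1} is then the special case $M=0$ of the full expansion Theorem \ref{thm:asy}. Your proposal is missing this quasimode-expansion ingredient and so only proves the weaker estimate \eqref{eq:asy1}.
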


In Section \ref{sec.3}, an extended version of this theorem is proved, providing full expansions of all lowest eigenvalues $\lambda_n(h)$, see Theorem \ref{thm:asy}.

\subsection{Low lying eigenvalues of the magnetic cross in the small angle limit}
When $\varepsilon$ tends to $0$, the angle between the two lines $\{\tau=\pm \varepsilon\sigma\}$ tends to $0$. It is interesting to understand the behavior of $\varkappa_{n}(\varepsilon)$ in such a limit. One could naively expect that, when $\varepsilon$ goes to $0$, $\inf\Spec(\Cr)$ goes to $\inf \Spec(\mathcal{M}^{[2]})$ where 
\[\mathcal{M}^{[2]}=D^2_{\tau}+\left(D_{\sigma}-\frac{\tau^3}{3}\right)^2\quad\text{acting on} \ L^2(\R^2).\]
The operator $\mathcal{M}^{[2]}$ is sometimes called Montgomery operator of order two. By Fourier transform, we have
\[
   \inf \Spec(\mathcal{M}^{[2]})=\inf_{\xi\in\R}\left(\inf\Spec(\mathcal{M}^{[2]}_{\xi})\right)\,,\quad 
   \text{where} \  \mathcal{M}^{[2]}_{\xi}=D^2_{\tau}+\left(\xi-\frac{\tau^3}{3}\right)^2\ \text{acting on} \ L^2(\R).\]
The quantity $\inf \Spec(\mathcal{M}^{[2]})$ has been numerically estimated, see \cite[Table 1]{HP10}.

Actually, the limit $\varepsilon\to 0$ is singular: The operator $\Cr$ is partially semiclassical.   
Indeed, after the scaling $\sigma=\varepsilon^{-1}s, \ \tau=t$, the operator $\Cr$ becomes 
\begin{equation}\label{eq:opsemicl}
   D_t^2+\Big(\varepsilon D_s-\frac{t^3}{3}+ s^2 t\Big)^2
   \quad\mbox{with}\quad D_t = -i\partial_t,\quad D_s = -i\partial_s.
\end{equation}
The operator \eqref{eq:opsemicl} is the Weyl quantization $\mathsf{Op}^{\sf w}_{\varepsilon}(\sX_{\alpha, \xi})$ of the symbol $\sX_{\alpha, \xi}$ where
\begin{equation}
\label{eq:Xalxi}
   \sX_{\alpha, \xi}=D_t^2+\Big(\xi-\frac{t^3}{3}+ \alpha^2 t\Big)^2
\end{equation}
is a self-adjoint operator acting on $L^2(\R)$ depending on the two real parameters $\alpha$ and $\xi$. In the small angle regime, the spectral analysis of the operator $\Cr$ is related to the spectral analysis of the family $\left(\sX_{\alpha, \xi}\right)_{(\alpha, \xi)\in\R^2}$. The asymptotic expansions of the first eigenvalues of $\Cr$ is related to the``band function'' $\varrho_1$ defined by the ground state energy of $\sX_{\alpha, \xi}$
\[
   \varrho_1(\alpha,\xi)=\min \Spec (\sX_{\alpha,\xi})\,,\quad (\alpha,\xi)\in\R^2\,,
\] 
see for instance \cite{R13} and \cite{RBH}, where such operators and reductions are considered. One of the requirements to apply the theory developed in \cite{RBH} (and that relates $\varrho_1$ to the eigenvalue asymptotic expansions when $\varepsilon\to0$ for the operator $\Cr$) is that $\varrho_1$ has a minimum. 

\begin{thm}\label{thmprincipalsymboleIntro}
The function $\varrho_1$ reaches its infimum $\rS_0$ in $\R^2$, so
\begin{equation*}
   \rS_0=\min_{(\alpha,\xi)\in\R^2}\varrho_1(\alpha,\xi),
\end{equation*}
and this minimum is reached on the set
\begin{equation*}
\Big\{(\alpha,\xi)\in\R^2, \quad |\xi|\le\frac{2}{3}|\alpha|^3 \Big\}\,.
\end{equation*}
\end{thm}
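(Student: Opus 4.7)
\textbf{Proof plan for Theorem \ref{thmprincipalsymboleIntro}.}

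\emph{Basic setup.} Since the potential $(\xi - t^3/3 + \alpha^2 t)^2$ grows like $t^6$ at infinity, the one-dimensional Schr\"odinger operator $\sX_{\alpha,\xi}$ has compact resolvent and a simple, non-degenerate ground state $\psi_{\alpha,\xi}$. Standard analytic perturbation theory then gives that $\varrho_1$ is real-analytic in $(\alpha,\xi)$. Two symmetries reduce the analysis to the first quadrant: $\sX_{\alpha,\xi}=\sX_{-\alpha,\xi}$ by inspection, and the unitary $t\mapsto -t$ conjugates $\sX_{\alpha,\xi}$ to $\sX_{\alpha,-\xi}$. In particular $\varrho_1$ is even in each argument, and it suffices to locate minimizers inside $\{\alpha\geq 0,\,\xi\geq 0\}$ where the target region becomes $\{\xi\leq \tfrac{2}{3}\alpha^3\}$.

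\emph{Existence via coercivity.} To produce $\rS_0$ I would show $\varrho_1(\alpha,\xi)\to +\infty$ as $|(\alpha,\xi)|\to\infty$, then invoke continuity. At each real root $t_i$ of $f(t)=\xi-t^3/3+\alpha^2 t$, the potential $f^2$ is locally bounded below by a harmonic oscillator of frequency $|f'(t_i)|=|\alpha^2-t_i^2|$. A standard IMS partition-of-unity combined with a commutator lower bound (using $[D_t,f]=-if'$) yields $\varrho_1(\alpha,\xi)\gtrsim \min_i |\alpha^2-t_i^2|$ modulo localization error. An elementary case analysis on the roots of the cubic $f$ (using $|t_i|\gtrsim \max\{|\alpha|,|\xi|^{1/3}\}$ for at least one root) shows this lower bound blows up as $(\alpha,\xi)\to\infty$. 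Continuity then delivers a minimizer.

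\emph{Location of minimizers.} The heart of the proof is to show that any critical point $(\alpha_0,\xi_0)$ with $\alpha_0,\xi_0\geq 0$ satisfies $\xi_0 \leq \tfrac{2}{3}\alpha_0^3$. By Feynman--Hellmann, writing $\psi=\psi_{\alpha_0,\xi_0}$,
\[
   \partial_\xi \varrho_1(\alpha_0,\xi_0) = 2\int_{\R} f(t)\,|\psi(t)|^2\,\rd t\,,
\]
so it suffices to establish $\int f\,|\psi|^2 > 0$ whenever $\xi > \tfrac{2}{3}\alpha^3$, giving a contradiction with $\partial_\xi\varrho_1=0$. In this exterior regime $f$ has a unique real root $t^* > 2\alpha$, with $f'(t^*)=\alpha^2-(t^*)^2<0$. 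The Taylor expansion
\[
   f(t^*+u) = f'(t^*)\,u \;-\; t^*\,u^2 \;-\; \tfrac{1}{3}u^3
\]
displays a cubic correction $-2\,t^*\,f'(t^*)\,u^3>0$ in the potential $f^2$, which tilts the well asymmetrically in favor of the half-line $\{t<t^*\}$ where $f>0$. This heuristically forces the ground state to place more mass to the left of $t^*$, giving the desired sign. Equivalently, the test-function comparison
\[
   \varrho_1(\alpha,\xi-\delta)\leq \varrho_1(\alpha,\xi) - 2\delta\,\langle f\psi,\psi\rangle + \delta^2
\]
shows that a strict inequality $\langle f\psi,\psi\rangle>0$ would contradict minimality. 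I would try to rigorously prove this sign by combining (i) the ground-state ODE $-\psi''+f^2\psi=\varrho_1\psi$ with positivity of $\psi$, (ii) a barrier/maximum-principle argument exploiting the asymmetric shape of $f^2$ near $t^*$, and (iii) as a sanity check, the semiclassical scaling $\xi=c\alpha^3$, $t=\alpha s$, which maps $\sX_{\alpha,\xi}$ unitarily to $\alpha^6[\alpha^{-8}D_s^2 + G_c(s)^2]$ with $G_c(s)=c+s-s^3/3$; this scaling shows $\varrho_1(\alpha,\tfrac{2}{3}\alpha^3)\sim C\alpha^{2/3}$ (quartic well at the double zero $s=-1$) while $\varrho_1(\alpha,\xi)\sim 3\alpha^2$ for $\xi$ just past $\tfrac{2}{3}\alpha^3$, confirming the claim for large $\alpha$.

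\emph{Expected main obstacle.} The integral sign $\langle f\psi,\psi\rangle>0$ throughout the exterior region is a non-perturbative statement about the global shape of the ground state, not just its harmonic part near $t^*$; the only tools available are the ODE, positivity of $\psi$, and the geometric asymmetry of $f^2$. Translating the cubic-tilt heuristic into a rigorous proof -- possibly via a convexity argument for $\xi\mapsto \varrho_1(\alpha,\xi)$ on the exterior ray, or via explicit construction of a super-solution showing $|\psi(t^*-u)|>|\psi(t^*+u)|$ for $u>0$ -- is the step I expect to require the most work.
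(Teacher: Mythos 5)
Your symmetry reduction and the overall two-step plan (coercivity gives existence of a minimum; a sign condition on derivatives of $\varrho_1$ excludes critical points from the exterior region) match the structure of the paper's proof. However, both of your key steps have genuine gaps.

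\emph{Location of minimizers.} This is where your proposal goes off track. You try to show that $\partial_\xi\varrho_1 = 2\int \sP_{\alpha,\xi}\,u_{\alpha,\xi}^2 >0$ on $\{\xi>\tfrac23\alpha^3\}$, and you yourself flag that controlling the sign of this single integral is the main obstacle: the integrand changes sign at $t_3$, and the ``cubic tilt'' heuristic is a statement about global mass distribution of the ground state, which the ODE and positivity alone do not deliver in any obvious way. The paper sidesteps this entirely by taking the \emph{combination} $\partial_\alpha\varrho_1 - 2\alpha t_3(\alpha,\xi)\,\partial_\xi\varrho_1$. By Feynman--Hellmann this equals $4\alpha\int \sP_{\alpha,\xi}(t)\,(t-t_3)\,u_{\alpha,\xi}^2\,\rd t$, and after factoring $\sP_{\alpha,\xi}(t)=-\sN_{\alpha,\xi}(t)(t-t_3)$ with $\sN_{\alpha,\xi}>0$ on $\{\xi\geq\tfrac23\alpha^3>0\}$ the integrand becomes $-4\alpha\,\sN_{\alpha,\xi}(t)(t-t_3)^2\,u_{\alpha,\xi}^2 \le 0$, with strict inequality of the integral. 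So this particular directional derivative is manifestly negative, hence no critical point in the exterior. There is nothing to estimate; the sign is algebraic. This is the idea missing from your write-up, and your route requires a non-perturbative pointwise statement about $u_{\alpha,\xi}$ that you have no tools to prove.

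\emph{Coercivity.} Your claimed lower bound $\varrho_1(\alpha,\xi)\gtrsim\min_i|\alpha^2-t_i^2|$ breaks down exactly where you need it: along and near the curve $\xi=\tfrac23\alpha^3$ the cubic has a double root at $t=-\alpha$ where $f'=0$, so $\min_i|\alpha^2-t_i^2|=0$ although $\varrho_1\to\infty$ there. Your own sanity check shows $\varrho_1\sim C\alpha^{2/3}$ on this curve, which is precisely what your harmonic-well bound cannot see. The paper's proof handles this by a quadratic partition of unity in $t$ that isolates the simple root $t_3$ from the two (possibly coalescing) roots $t_1,t_2$: near $t_3$ one uses a genuine harmonic oscillator comparison, while on the other side one uses the factor $\sN_{\alpha,\xi}$ written in canonical form, reducing (after scaling) to the known positivity $\mathrm{M}_0=\min_\eta\inf\Spec\big(D_\tau^2+(\tfrac{\tau^2}{2}-\eta)^2\big)>0$. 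You would need some version of this two-region split to fix your coercivity argument; a single commutator lower bound is not enough.
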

By construction, $\rS_0\le\inf \Spec(\mathcal{M}^{[2]})$. The numerical simulations that we have performed provide the upper bound 
\begin{equation}
\label{eq:ubS_0}
   \rS_0\le \varrho_1(\alpha_0,0) \simeq 0.4941\quad\mbox{for}\quad\alpha_0=0.786.
\end{equation}
We note that the value given in \cite[Table 1]{HP10} for $\inf \Spec(\mathcal{M}^{[2]})$ is $\simeq 0.66$, corresponding to the value $\xi=0$ of the Fourier parameter. Hence 
we have obtained the strict inequality $$\rS_0<\inf \Spec(\mathcal{M}^{[2]}).$$
Our numerical results for the band function $\varrho_1$, see Figures \ref{F1} and \ref{F2}, suggest that the minimum $\rS_0$ is attained at the sole two values $\pm(\alpha_0,0)$ of $(\alpha,\xi)$.

The following result gives the convergence of the eigenvalues $\varkappa_{n}(\varepsilon)$ of the model operator $\Cr$ as $\varepsilon\to0$.
\begin{thm}\label{1.8Intro}
For all $n\ge1$, there exist $C>0$ and $\varepsilon_0>0$ such that for all $\varepsilon\in (0,\varepsilon_0)$
\begin{equation*}|\varkappa_{n}(\varepsilon)-\rS_0|\le C\varepsilon.\end{equation*}
\end{thm}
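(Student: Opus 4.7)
The plan is to reduce $\Cr$ to the semiclassical setting with operator-valued symbol already set up in Section 1.4, and then to follow the general scheme of \cite{RBH}. After the rescaling $\sigma=\varepsilon^{-1}s$, $\tau=t$, the operator $\Cr$ becomes (unitarily) $\hat{\Cr}:= D_t^2+(\varepsilon D_s-\tfrac{t^3}{3}+s^2 t)^2$ from \eqref{eq:opsemicl}, i.e.\ the $\varepsilon$-semiclassical Weyl quantization $\mathsf{Op}^{\sf w}_{\varepsilon}(\sX_{s,\xi})$ of the operator-valued symbol $(s,\xi)\mapsto\sX_{s,\xi}$ on $L^2(\R_t)$. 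The associated band function is exactly $\varrho_1$, and by Theorem \ref{thmprincipalsymboleIntro} it attains its minimum $\rS_0$ on a set with non-empty interior. The two-sided bound $|\varkappa_n(\varepsilon)-\rS_0|\le C_n\varepsilon$ will then follow from min-max combined with an upper and a lower bound.

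For the lower bound $\varkappa_n(\varepsilon)\ge \rS_0-C\varepsilon$, the starting point is the pointwise operator inequality $\sX_{s,\xi}\ge\varrho_1(s,\xi)\ge \rS_0$ on $L^2(\R_t)$, i.e.\ nonnegativity of the operator-valued symbol $\sX_{s,\xi}-\rS_0$. I would implement it by combining an IMS-type partition of unity in the phase-space variables $(s,\xi)$ at scale $\sqrt{\varepsilon}$ with the fiber-wise inequality on each cell; commutator errors produced by the cutoffs are of order $\varepsilon$, matching the claimed rate. This yields $\hat{\Cr}\ge\rS_0-C\varepsilon$ as quadratic forms, and the lower bound for every fixed $n$ follows from min-max.

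For the upper bound $\varkappa_n(\varepsilon)\le\rS_0+C_n\varepsilon$, I would construct $n$ orthogonal coherent-state quasi-modes localized at $n$ interior points $(\alpha_k,\xi_k)$ of the minimum set of $\varrho_1$ (they exist by Theorem \ref{thmprincipalsymboleIntro}), with pairwise distinct abscissas $\alpha_k$. For each $k$, denote by $u_k\in L^2(\R_t)$ a normalized eigenfunction of $\sX_{\alpha_k,\xi_k}$ at the eigenvalue $\rS_0$, and set
\[
   \psi_\varepsilon^k(s,t)=\varepsilon^{-1/4}\,\phi\!\left(\frac{s-\alpha_k}{\sqrt{\varepsilon}}\right)\re^{i\xi_k s/\varepsilon}u_k(t),
\]
for a fixed Schwartz cutoff $\phi$. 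A Taylor expansion of the symbol $\sX_{s,\xi}$ at $(\alpha_k,\xi_k)$, whose linear terms in $(s-\alpha_k,\xi-\xi_k)$ give vanishing energy contribution since $(\alpha_k,\xi_k)$ lies in the interior of the minimum set of $\varrho_1$, yields $\|(\hat{\Cr}-\rS_0)\psi_\varepsilon^k\|\le C\sqrt{\varepsilon}$ and Rayleigh quotient $\langle\hat{\Cr}\psi_\varepsilon^k,\psi_\varepsilon^k\rangle\le(\rS_0+C\varepsilon)\|\psi_\varepsilon^k\|^2$. For $\varepsilon$ small enough the $s$-supports are disjoint, the $\psi_\varepsilon^k$ are mutually orthogonal, and min-max concludes.

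The main technical obstacle is the lower bound, because the symbol $\sX_{s,\xi}-\rS_0$ is operator-valued and unbounded: both the fiber variable $t$ and the term $s^2 t$ in the symbol carry unbounded growth, so neither the scalar sharp Gårding inequality nor its standard matrix-valued version applies directly. The way around this, which is the main contribution of the framework in \cite{RBH}, is to use the \emph{a priori} exponential decay of the low-lying eigenfunctions of $\Cr$ (Proposition \ref{2.16aux}) to confine the Rayleigh quotient analysis to a bounded region in $(s,\xi)$; there the symbol is smooth with controlled derivatives and the operator-valued sharp Gårding / coherent-state arguments deliver the $O(\varepsilon)$ remainder announced in the statement.
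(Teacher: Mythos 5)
Your lower-bound strategy is, in essence, the paper's: the paper implements the ``phase-space decomposition at scale $\sqrt{\varepsilon}$'' through the anti-Wick (coherent-state) quantization $\int_{\R^2} \sX_{u\sqrt\varepsilon,\,p\sqrt\varepsilon}\,\Pi_{u,p}\,\rd u\,\rd p$, uses the fiber-wise inequality $\sQ_{\alpha,\xi}\ge\rS_0$, and controls the unbounded polynomial remainder operators with the eigenfunction decay estimates of Corollary~\ref{petitemicro}. One correction to your sketch, though: the decay needed there is \emph{uniform in} $\varepsilon$, and Proposition~\ref{2.16aux} only gives exponential decay for each fixed $\varepsilon>0$ with no control of the constants. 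The paper has to prove a uniform version first (Lemma~\ref{lem:gapunif} and Proposition~\ref{prop:gapunif2}, a Helffer--Mohamed type estimate in which the constant is shown to be independent of $\varepsilon\in(0,1)$), and then upgrade it to the weighted estimates of Corollary~\ref{petitemicro}. You should cite those, not Proposition~\ref{2.16aux}, for the reduction to a controlled region.

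The genuine gap is in your upper bound. You assert that Theorem~\ref{thmprincipalsymboleIntro} gives you a minimum set of $\varrho_1$ with non-empty interior, and you then construct one coherent-state quasimode per minimum point, requiring $n$ points with distinct abscissas. But Theorem~\ref{thmprincipalsymboleIntro} only says that the minimum is attained \emph{somewhere inside} $\{|\xi|\le\frac{2}{3}|\alpha|^3\}$; it gives no lower bound on the size of the minimum set, and the paper's numerics (Figures~\ref{F1}--\ref{F2}) indicate the minimum is attained at exactly two points $\pm(\alpha_0,0)$. Your construction therefore produces at most two mutually orthogonal quasimodes and does not prove the theorem for $n\ge 3$. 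The observation the paper actually uses is that a \emph{single} minimum point already carries arbitrarily many quasimodes: after the change of variables $s=\alpha_0+\varepsilon^{1/2}\fs$, the leading-order equation $\sX_{\alpha_0,\xi_0}\psi_0=\rS_0\psi_0$ does not constrain the transverse profile, so one may take $\psi_0(\fs,\ft)=f_0(\fs)u_{\alpha_0,\xi_0}(\ft)$ with $f_0$ an \emph{arbitrary} Schwartz function, and the corrector $\psi_1$ in \eqref{DSFimpose2croix} is built from the Feynman--Hellmann derivatives $\partial_\alpha u$ and $\partial_\xi u$ for any such $f_0$. Choosing $N$ orthonormal profiles $f_0^{(1)},\dots,f_0^{(N)}$ (e.g.\ Hermite functions) gives $N$ orthogonal quasimodes with the same quasi-eigenvalue $\rS_0$ up to $O(\varepsilon)$, all centered at one point. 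In your notation this amounts to replacing your fixed $\phi$ by $N$ orthogonal profiles instead of $N$ distinct centers; as written your argument fails for $n$ larger than the (possibly finite, possibly equal to two) number of minimum points.
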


\subsection{Organization of the paper}
In Section \ref{SA}, we prove the preliminary Theorems \ref{thmequcroixINTRO} and \ref{agmon_gammad}. In Section \ref{sec.3}, we establish the full asymptotic expansions of the eigenvalues and eigenfunctions. Section \ref{PSA} is devoted to the proofs of Theorems \ref{thmprincipalsymboleIntro} and \ref{1.8Intro} and to the presentation of numerical simulations concerning the band function $\varrho_1$ and the eigenpairs of $\Cr$ as $\varepsilon\to0$.

\section{Bounds from below and exponential decay}\label{SA}
In this section we prove the preliminary bounds from below \eqref{eq:bfb}-\eqref{eq:Ibfb} and the Agmon estimates \eqref{eq:Ag}. We will use several times a perturbation formula for the magnetic Laplacian which we first state.

\subsection{Perturbation of the magnetic potential}
Let $\Astar$ be another smooth magnetic potential defined on $\overline\Omega$. In practice $\Astar$ will be the Taylor expansion of $\AAA$ at some point $\cx_0$ and to various orders (2, 3 or 4). By expanding the square, we get
\begin{equation}
\label{eq:diff}
  \Fq(u) =   
  \Fqstar(u)
  + 2 \Re \big\langle (-ih\nabla+\Astar)u,(\AAA-\Astar)u\big\rangle 
  + \|(\AAA-\Astar)u\|^2.
\end{equation}
This yields
$
  \Fq(u) \geq \Fqstar(u) - 2  \Fqstar(u)^{1/2} \|(\AAA-\Astar)u\|
$
by Cauchy-Schwarz inequality, leading to the parametric estimate
(based on the inequality $2ab\leq \eta a^2+\eta^{-1}b^2$, for all $\eta>0$)
\begin{equation}\label{eq:minoeta}
   \forall \eta>0, \quad 
   \Fq(u) \geq   (1-\eta) \Fqstar(u) - \eta^{-1}\|(\AAA-\Astar)u\|^2\,.
\end{equation}
Such a lower bound is used, for instance, in the seminal paper \cite[p. 51]{HM96}, and in the book \cite[Chap. 8]{FH}.

\subsection{Lower bound}
The proof of \eqref{eq:bfb} is based on a quadratic partition of unity. Let us recall that such a partition is given for each relevant $h>0$ by a finite collection of smooth cutoff functions $(\chi_{j}^{h})$ satisfying on $\Omega$
\begin{equation}\label{partition}
   \sum_j |\chi_{j}^{h}|^2=1\,. 
\end{equation}
Then we have the following well-known localization formula (see \cite{CFKS86})
\begin{equation}
\label{formule de localisation}
   \Fq(u)=\sum_j\Fq(\chi_{j}^{h}u)-h^2\sum_j\| u\,|\nabla(\chi_{j}^{h})|\|^2_{L^2(\Omega)}\,.
\end{equation}

We introduce three sets $\Sigma^{[1]}$, $\Sigma^{[2]}(h)$ and $\Sigma^{[3]}(h)$ covering $\Omega$. 
\begin{nota}\label{notadecoupagebis}
Let $\cB(\cx_0,r)$ denote the open ball of center $\cx_0$ and radius $r$ and
\[
   \cB(\Sigma,r) = \bigcup_{\cx\in\Sigma} \cB(\cx,r).
\]
Let $R_1>0$ such that $\cB(\Sigma,3R_1)\subset\Omega$. Choose $d\in(\frac{3}{16},\frac{1}{4})$ and introduce
\begin{enumerate}[label=(\roman*)]
\item $\Sigma^{[1]} = \Omega\setminus\overline\cB(\Sigma,R_1)$.
\item $\Sigma^{[2]}(h) = \cB(\Sigma,2R_1)\setminus\overline\cB(\Sigma,\frac12 h^{d})$.
\item $\Sigma^{[3]}(h) = \cB(\Sigma,h^{d})$.
\end{enumerate}
\end{nota}

Then we consider a partition of unity composed of three cutoff functions 
$(\chifrakUU,\mathfrak{X}^{h,[2]},\mathfrak{X}^{h,[3]})$ associated with this covering of $\Omega$ in the sense that
\begin{equation*}
 |\chifrakUU|^2+|\mathfrak{X}^{h,[2]}|^2+|\mathfrak{X}^{h,[3]}|^2=1,
\end{equation*}
and
\begin{equation*}
\supp \chifrakUU \subset \Sigma^{[1]},\quad \supp \mathfrak{X}^{h,[2]} \subset \Sigma^{[2]}(h),\quad \supp \mathfrak{X}^{h,[3]} \subset \Sigma^{[3]}(h).
\end{equation*}
The distance between $\partial\Sigma^{[2]}(h)\cap\Sigma^{[1]}(h)$ and $\partial\Sigma^{[1]}(h)\cap\Sigma^{[2]}(h)$ is $R_1$. The distance between $\partial\Sigma^{[3]}(h)$ and $\partial\Sigma^{[2]}(h)\cap\Sigma^{[3]}(h)$ is $\frac12 h^d$. Hence we can choose the cutoff functions so that
\begin{equation}
\label{eq:nabla}
|\nabla\chifrakUU|^2\le \Cims, \quad 
|\nabla\mathfrak{X}^{h,[2]}|^2\le \Cims h^{-2d}, \quad 
|\nabla\mathfrak{X}^{h,[3]}|^2\le \Cims h^{-2d}.
\end{equation}
with a constant $\Cims$ independent of $h$ and $d$.
Combined with the localization formula \eqref{formule de localisation} associated with the partition $(\chifrakUU,\mathfrak{X}^{h,[2]},\mathfrak{X}^{h,[3]})$, the estimates \eqref{eq:nabla} yield  for all $u \in H^1(\Omega)$, 
\begin{equation}\label{eq:1b}
   \Fq(u)\ge   \Fq(\chifrakUU u)+\Fq(\mathfrak{X}^{h,[2]} u)+\Fq(\mathfrak{X}^{h,[3]} u)
   -\Cims h^{2-2d}\| u \|^2_{L^2(\Omega)},
\end{equation}
We set  
$u_1=\chifrakUU u$, $u_2=\mathfrak{X}^{h,[2]}u$ and $u_3=\mathfrak{X}^{h,[3]}u$, and are going to bound each $\Fq(u_k)$ from below (for $k=1,2,3$).

\begin{lem}[Lower bound on $\Sigma^{[1]}$]\label{lem:eq:2}
There exist $c_1$ and $h_0>0$ such that for all $0<h<h_0$
\begin{equation*}
   \Fq(u_1) \ge c_1 h^{4/3} \|u_1\|^2\,.
\end{equation*}
\end{lem}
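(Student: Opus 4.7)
The key observation is that on the set $\Sigma^{[1]} = \Omega \setminus \overline{\cB(\Sigma, R_1)}$ where $u_1$ is supported, the self-intersection points of $\Gamma$ have been removed, so $\BB$ vanishes only along the smooth piece $\Gamma \setminus \Sigma$ on which, by the very definition of $\Sigma$, we have $\nabla \BB \neq 0$. Thus on $\overline{\Sigma^{[1]}}$ we are exactly in the non-crossing situation already treated in the literature (\cite{HM96,PK,artJP}), for which the correct energy scale is $h^{4/3}$ rather than $h^{3/2}$. The plan is to localize further and invoke these known lower bounds.

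My first step would be to introduce a second quadratic partition of unity on $\overline{\Sigma^{[1]}}$ (with bounded derivatives, independent of $h$) subordinate to a finite open cover consisting of sets of three types: (a) interior balls on which $|\BB| \ge c_0 > 0$; (b) interior neighborhoods of points of $\Gamma \setminus \Sigma$ lying away from $\partial\Omega$; and (c) neighborhoods of boundary points, themselves split according to whether $|\BB|\ge c_0$ on the set or whether the set contains an intersection point of $\Gamma$ with $\partial\Omega$ (which by Assumption~\ref{hypcadre2}\,\ref{i} is transverse). Applying the IMS-type localization formula \eqref{formule de localisation} costs only $O(h^2)\|u_1\|^2$, which is negligible against $h^{4/3}$.

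On the sets of type (a) I would use the classical pointwise lower bound $\Fq(v) \ge h \int |\BB|\,|v|^2$ in the interior (respectively $\Theta_0 h \int |\BB|\,|v|^2$ for boundary sets of type (c) with non-vanishing field), giving a lower bound of order $h \gg h^{4/3}$. On the sets of type (b) centered on a point of $\Gamma\setminus\Sigma$, I would use the local straightening of $\Gamma$ together with the perturbation inequality \eqref{eq:minoeta}, taking $\Astar$ to be the Taylor expansion of $\AAA$ to order $2$ at the base point: after an appropriate rescaling this reduces to the Montgomery operator $D_\tau^2+(D_\sigma-\tau^2/2)^2$ whose bottom of spectrum is positive, yielding the required $c\,h^{4/3}$ lower bound as in \cite{HM96,RD}.

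The main obstacle, as usual in this family of problems, lies in the boundary sets of type (c) meeting $\Gamma$: there one has to combine Neumann boundary conditions with a vanishing magnetic field, and reduce to the model half-plane operator studied in \cite{PK,artJP}, whose ground state is strictly positive. Once this local lower bound is secured, one takes $c_1$ to be the minimum over the finitely many pieces of the corresponding constants and sums the estimates through the partition of unity, absorbing the $O(h^2)$ remainder into the main $h^{4/3}$ term for $h$ small enough.
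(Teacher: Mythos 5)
Your plan is correct and in fact reconstructs the argument behind the result the paper invokes: the paper's own proof of this lemma is a one-line citation to \cite[Theorem 1.9]{artJP}, and that theorem is established precisely by the localization-into-cases strategy you describe. On $\Sigma^{[1]}$ one is at distance $\ge R_1$ from $\Sigma$, so $\BB$ vanishes there only on the smooth curve $\Gamma\setminus\Sigma$ where $|\nabla\BB|$ is bounded below; the three regimes you isolate (non-vanishing field in the interior, Montgomery-type reduction along $\Gamma\setminus\Sigma$ via the second-order Taylor expansion of $\AAA$ and the $h^{1/3}$-scaling, and the transverse half-plane model at $\Gamma\cap\partial\Omega$) are exactly the cases treated in \cite{artJP}, and the IMS cost is indeed $O(h^2)$, negligible against $h^{4/3}$. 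So the approach matches; the only difference is that you have written out what the paper delegates to a reference.
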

\begin{proof} 
This result is a consequence of \cite[Theorem 1.9]{artJP}. 
\end{proof}

\begin{lem}[Lower bound on $\Sigma^{[2]}(h)$]\label{lem:eq:8b}
There exist $c_2$ and $h_0>0$ such that for all $0<h<h_0$
\begin{equation*}
   \Fq(u_2)  \ge c_2 h^{4/3+2d/3} \|u_2\|^2\,.
\end{equation*}
\end{lem}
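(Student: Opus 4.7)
The plan is to reduce the analysis inside $\Sigma^{[2]}(h)$ to a family of Montgomery-of-order-one model operators via a variable-scale partition of unity. The essential mechanism, which I would first make precise, is that near a crossing point $\cx_0\in\Sigma$ the zero set $\Gamma$ splits into two smooth arcs along which the transverse slope $|\partial_\nu\BB|$ vanishes at $\cx_0$ and behaves like $\dist(\cdot,\Sigma)$. In $\Sigma^{[2]}(h)$ this slope is therefore at least of order $h^d$, which is the origin of the extra $h^{2d/3}$ factor in the target lower bound $h^{4/3+2d/3}$.

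Concretely, I would decompose $\Sigma^{[2]}(h)$ into dyadic shells $A_k=\{\cx\in\Omega:\;2^{k-1}h^d\le \dist(\cx,\Sigma)<2^k h^d\}$ for $k=0,\ldots,K_h$ with $K_h\sim\log_2(R_1/h^d)$, and cover each $A_k$ by balls $B_{k,j}=\cB(\cx_{k,j},\rho_k)$ of radius $\rho_k$ small enough to justify Taylor expansion yet large enough for the Montgomery cell to fit inside. On balls that are uniformly away from $\Gamma$, one has $|\BB|\gtrsim(2^k h^d)^2$ and the standard magnetic commutator identity $\Fq(v)\ge h\int|\BB||v|^2-Ch^2\|v\|^2$ in $\R^2$ already yields a lower bound much stronger than needed. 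On balls meeting $\Gamma$, I would take $\cx_{k,j}\in\Gamma$ and let $\Astar$ be the Taylor expansion of $\AAA$ of order $2$ at $\cx_{k,j}$, so that $\BB^\star$ is linear in the normal coordinate $t$ to $\Gamma$ with slope $\gamma_{k,j}\asymp 2^k h^d$, while $|\AAA-\Astar|\lesssim\rho_k^3$ uniformly on the ball.

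Applying the perturbation inequality \eqref{eq:minoeta} would reduce $\Fq(u_{k,j})$ to $\Fqstar(u_{k,j})$ modulo a remainder $\eta^{-1}\|(\AAA-\Astar)u_{k,j}\|^2\lesssim\eta^{-1}\rho_k^6\|u_{k,j}\|^2$. The anisotropic rescaling $\xi=(\gamma_{k,j}/h)^{1/3}s$, $\tau=(\gamma_{k,j}/h)^{1/3}t$ then turns $\Fqstar$ into $\gamma_{k,j}^{2/3}h^{4/3}$ times the Dirichlet quadratic form of the Montgomery-of-order-one operator $D_\tau^2+(D_\xi-\tau^2/2)^2$ on a ball whose radius $\rho_k(\gamma_{k,j}/h)^{1/3}$ tends to infinity as $h\to0$ provided $\rho_k\gg(h/\gamma_{k,j})^{1/3}$. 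The first Dirichlet eigenvalue of that rescaled operator converges to the known positive Montgomery constant $\mu_1$, giving $\Fq(u_{k,j})\ge(\mu_1-o(1))\gamma_{k,j}^{2/3}h^{4/3}\|u_{k,j}\|^2\ge c_2\,h^{4/3+2d/3}\|u_{k,j}\|^2$ on each ball; summing over $(k,j)$ and absorbing the IMS remainder $h^2\rho_k^{-2}\lesssim h^{2-2d}$ (possible because $d<\tfrac14$) would then yield the claim.

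The hardest part will be the uniform control, over the entire range $\delta_k=2^k h^d\in[h^d,R_1]$, of the three competing scales involved: the partition radius $\rho_k$, the Montgomery cell length $(h/\gamma_{k,j})^{1/3}$, and the Taylor remainder $\rho_k^3$. Tuning the perturbation parameter $\eta$ in \eqref{eq:minoeta} so that $\eta^{-1}\rho_k^6$ stays strictly below the main term $\gamma_{k,j}^{2/3}h^{4/3}$ for every shell is exactly where the lower endpoint $d>\tfrac{3}{16}$ of Theorem~\ref{thmequcroixINTRO} enters, and carrying this through simultaneously with the Montgomery limit is the technical heart of the argument.
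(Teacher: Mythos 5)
Your proposal follows the same basic mechanism as the paper: near $\Gamma$ a Montgomery-of-order-one reduction, away from $\Gamma$ the elementary bound $\Fq\ge h\int|\BB|\,|u|^2$, in both cases exploiting that $|\nabla\BB|\asymp\dist(\cdot,\Sigma)\ge\tfrac12 h^d$ in $\Sigma^{[2]}(h)$. Where you differ is in the scale management. The paper shows that a \emph{single fixed} partition scale $\rho=h^{1/4}$ works uniformly over the whole annulus $\tfrac12 h^d\le\dist(\cdot,\Sigma)\le 2R_1$: with $\rho=h^{1/4}$ the Taylor error after squaring is $h^{6\rho}=h^{3/2}$, the IMS remainder is $h^{2-2\rho}=h^{3/2}$, and both are $\ll h^{4/3+2d/3}$ precisely because $d<\tfrac14$, while the Montgomery constant already enters via a prepackaged lower bound (\cite[Lemma~2.3]{artJP}, case $(\ell)=(3)$). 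Your dyadic-shell scheme with variable radii $\rho_k$ and an optimized $\eta$ in \eqref{eq:minoeta} rebuilds the same estimates from scratch; it is workable (the window $(h/\gamma_k)^{1/3}\ll\rho_k\ll(\gamma_k h^2)^{1/9}$ is nonempty for all $\gamma_k\gtrsim h^d$ exactly when $d<\tfrac14$), but it gains nothing, since the fixed scale $h^{1/4}$ lies in that window for every shell simultaneously.

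One assertion in your last paragraph is wrong and should be flagged: the lower endpoint $d>\tfrac{3}{16}$ plays \emph{no} role in this lemma. The entire proof of Lemma~\ref{lem:eq:8b} (and of Lemmas~\ref{lem:eq:2} and~\ref{lem:eq:8b} alike) uses only $d<\tfrac14$. The constraint $d>\tfrac{3}{16}$ comes exclusively from Lemma~\ref{lem:eq:12} on $\Sigma^{[3]}(h)$, where the perturbation remainder $h^{3/4+4d}$ must be subleading with respect to $h^{3/2}$; this gives $3/4+4d>3/2\iff d>3/16$, which is then combined with $2-2d>3/2$ in \eqref{eq:Iexp}. Attributing this threshold to the $\eta$-tuning on $\Sigma^{[2]}(h)$ misidentifies the source of the constraint.
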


\begin{proof}
Set $\rho=\frac14$.
We introduce a second partition of unity $(\chi_j^h)_{j\in\mathcal{J}}$ on $\Sigma^{[2]}(h)$ associated with a family of balls $\cB(\cx_j, d_jh^\rho)$ for some constants $d_j$. We first cover the zero set $\Gamma\cap\Sigma^{[2]}(h)$ and choose the centers $\cx_j\in\Gamma$ with $d_j=1$ so that the distances between consecutive points along $\Gamma$ is $\simeq\frac12 h^\rho$. Hence, setting
\[
   \widetilde{\Sigma^{[2]}}(h) = \Sigma^{[2]}(h) \setminus \bigcup_{\cx_j\in\Gamma} \cB(\cx_j,h^\rho)
\]
we obtain that the distance between $\Gamma$ and $\widetilde{\Sigma^{[2]}}(h)$ is larger than $\frac12 h^\rho$. We then cover $\widetilde{\Sigma^{[2]}}(h)$ with balls $\cB(\cx_j, d_jh^\rho)$ choosing $d_j=\frac14$ and the mutual distances between the centers bounded from below by $\frac18h^\rho$. Finally we can choose the functions $\chi_j^h$ so that $|\nabla\chi_j^h|^2\le C_{2,\IMS} h^{2\rho}$ and the localization formula \eqref{formule de localisation} yields
\begin{equation}
\label{eq:3}
   \Fq(u_2) \ge \sum_j \Fq(\chi_j^h u_2) - C_{2,\IMS} h^{2-2\rho}\|u_2\|^2\,.
\end{equation}
We have, by \cite[Lemma 2.3]{artJP} (for the case $(\ell)=(3)$ of Table 2.1),
\begin{equation}
\label{eq:4b}
   \Fq(\chi_j^h u_2) \ge \Big(\frac12 \mathrm{M}_{0} \, |\nabla\BB(\cx_j)|^{2/3} h^{4/3} - 2C h^{6\rho}\Big) 
   \|\chi_j^h u_2\|^2 \quad\quad (\cx_j\in\Gamma).
\end{equation}
As a consequence of the non degeneracy of $\nabla\BB$ on $\Gamma$ outside $\Sigma$ (namely $\nabla \BB|\neq 0$ on $\Gamma\backslash \Sigma$) and the non degeneracy of $\Hess\BB$ on $\Sigma$ (meaning that the eigenvalues of the Hessian matrix are not equal to $0$, according to Assumption \ref{hypcadre2})
there exists a positive constant $C(\BB)$ such that 
\[|\nabla\BB(\cx)|\ge C(\BB)\, \dist(\cx,\Sigma),\quad \forall\cx\in\Omega\,.\] 
Since $\dist(\cx_j,\Sigma)$ is larger than $\frac12 h^d$ by construction, from \eqref{eq:4b} we get
\begin{equation*}
   \Fq(\chi_j^h u_2) \ge \Big(\frac14 \mathrm{M}_{0}  C(\BB)^{2/3} h^{4/3+2d/3} - 2C h^{6\rho}\Big) 
   \|\chi_j^h u_2\|^2.
\end{equation*}
We note that, with $\rho=\frac14$ and $d<\frac14$, the exponent $6\rho=\frac32$ is (strictly) larger than $\frac43+\frac23d$. Therefore, for $h$ small enough
\begin{equation}
\label{eq:5b}
   \Fq(\chi_j^h u_2) \ge \frac18 \mathrm{M}_{0}  C(\BB)^{2/3} h^{4/3+2d/3}  
   \|\chi_j^h u_2\|^2 \quad\quad (\cx_j\in\Gamma).
\end{equation}
When $\cx_j$ does not belong to $\Gamma$, in the ball $\cB(\cx_j,d_jh^\rho)$ we use the lower bound (see \cite[Lemma 1.4.1]{FH})
\begin{equation}
\label{eq:4bb}
   \Fq(\chi_j^h u_2) \ge \inf_{\cx\in\cB(\cx_j,d_jh^\rho)} |\BB(\cx)| \: h \:
   \|\chi_j^h u_2\|^2 \quad\quad (\cx_j\not\in\Gamma).
\end{equation}
For any $\cx\in\cB(\cx_j,d_jh^\rho)$, its distance to $\Gamma$ is larger than $\frac12h^\rho$ by construction. Let $\cg\in\Gamma$ be such that $\dist(\cx,\Gamma)=\dist(\cx,\cg)$. Then $\dist(\cg,\Sigma)$ is larger than $\frac12h^d$. As a consequence of the Morse lemma
\[
   |\BB(\cx)| \ge C'(\BB)\,\dist(\cx,\cg)\,\dist(\cg,\Sigma) \ge C h^{\rho+d}.
\]
Hence, with \eqref{eq:4bb} we obtain
\begin{equation}
\label{eq:5bb}
   \Fq(\chi_j^h u_2) \ge C h^{1+\rho+d} \:
   \|\chi_j^h u_2\|^2 \quad\quad (\cx_j\not\in\Gamma).
\end{equation}
With $\rho=\frac14$ and $d<\frac14$, the exponent $1+\rho+d$ is $<\frac43+\frac23d$. Therefore, for $h$ small enough, as a result of \eqref{eq:5b} and \eqref{eq:5bb} we have, for a positive constant $c$ independent of $h$
\begin{equation*}
   \Fq(\chi_j^h u_2) \ge c h^{4/3+2d/3}  
   \|\chi_j^h u_2\|^2 \quad\quad (\forall j\in\mathcal{J}).
\end{equation*}
With \eqref{eq:3} this yields
\begin{align*}
   \Fq(u_2) &\ge c h^{4/3+2d/3} \sum_j \|\chi_j^h u_2\|^2 - C_{2,\IMS} h^{2-2\rho}\|u_2\|^2\\
   &\ge c_2 h^{4/3+2d/3} \|u_2\|^2 \,,
\end{align*}
since $2-2\rho=\frac32>4/3+2d/3$ for any $d<\frac14$. The lemma is proved.
\end{proof}

\begin{lem}[Lower bound on $\Sigma^{[3]}$]\label{lem:eq:12}
There exist $C_3$ and $h_0>0$ such that for all $0<h<h_0$
\begin{equation*}
   \Fq(u_3) \ge \big(\CBSigma  h^{3/2} - C_3h^{3/4+4d}\big) \|u_3\|^2.
\end{equation*}
\end{lem}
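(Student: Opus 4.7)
The strategy is to exploit the fact that, for $h$ small enough, the support of $u_3 = \mathfrak{X}^{h,[3]} u$ is contained in the \emph{disjoint} union $\bigsqcup_{\cx_0\in\Sigma} \cB(\cx_0, h^d)$, since $\Sigma$ is finite and $h^{d}\to 0$. Thus we may write $u_3 = \sum_{\cx_0\in\Sigma} u_{3,\cx_0}$ with $u_{3,\cx_0}$ supported in $\cB(\cx_0, h^d)$, and by disjointness of supports
\[
   \Fq(u_3) = \sum_{\cx_0\in\Sigma} \Fq(u_{3,\cx_0}),
   \qquad
   \|u_3\|^2 = \sum_{\cx_0\in\Sigma}\|u_{3,\cx_0}\|^2\,.
\]
It therefore suffices to prove, for each $\cx_0\in\Sigma$, a lower bound
$\Fq(u_{3,\cx_0}) \ge (\Lambda_1^{\cx_0} h^{3/2} - C h^{3/4 + 4d})\|u_{3,\cx_0}\|^2$, and to conclude by using $\Lambda_1^{\cx_0}\ge\CBSigma$ from \eqref{eq:CBSigma}.

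The key step is to replace the full potential $\AAA$ by its third order Taylor polynomial $\AAA^{\cx_0}$ at $\cx_0$. I would apply the perturbation inequality \eqref{eq:minoeta} with $\Astar = \AAA^{\cx_0}$ and some $\eta\in(0,1)$ to be chosen:
\[
   \Fq(u_{3,\cx_0}) \ge (1-\eta)\, \mathcal{Q}_h^{\AAA^{\cx_0}}(u_{3,\cx_0}) - \eta^{-1}\|(\AAA-\AAA^{\cx_0})u_{3,\cx_0}\|^2\,.
\]
Since $\AAA^{\cx_0}$ is the order-$3$ Taylor expansion of the smooth $\AAA$, we have the pointwise bound $|\AAA(\cx)-\AAA^{\cx_0}(\cx)|\le C|\cx-\cx_0|^4$ on $\cB(\cx_0, h^d)$, hence
\[
   \|(\AAA-\AAA^{\cx_0})u_{3,\cx_0}\|^2 \le C\, h^{8d}\,\|u_{3,\cx_0}\|^2\,.
\]
For the leading term, since $u_{3,\cx_0}$ is supported away from $\partial\Omega$ (by Assumption \ref{hypcadre2}\,\ref{ii}), extension by zero to $\R^2$ preserves the quadratic form, so
\[
   \mathcal{Q}_h^{\AAA^{\cx_0}}(u_{3,\cx_0}) \ge \inf\Spec\bigl((-ih\nabla+\AAA^{\cx_0})^2\bigr) \|u_{3,\cx_0}\|^2\,.
\]
By the unitary equivalence between $(-ih\nabla+\AAA^{\cx_0})^2$ and $(-ih\nabla+\Xi(\cx_0)\AAA_{\varepsilon(\cx_0)})^2$ mentioned before \eqref{eq:OpSigma}, combined with the rescaling Lemma \ref{lem:scal}, this infimum is exactly $h^{3/2}\Lambda_1^{\cx_0}$.

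Collecting everything,
\[
   \Fq(u_{3,\cx_0}) \ge \bigl((1-\eta)\Lambda_1^{\cx_0} h^{3/2} - C\eta^{-1} h^{8d}\bigr)\|u_{3,\cx_0}\|^2.
\]
The remaining task is to balance the two error terms $\eta\,h^{3/2}$ and $\eta^{-1}h^{8d}$: the AM--GM optimal choice is $\eta = h^{4d - 3/4}$, which produces a common error of order $h^{3/4 + 4d}$. This is the main obstacle, and it is precisely here that the lower bound $d>\tfrac{3}{16}$ enters: it guarantees $\eta\to 0$ as $h\to 0$ (so $1-\eta$ is bounded below by, say, $\tfrac12$), and it ensures $3/4+4d>3/2$ so that the correction is genuinely subordinate to the leading term $\CBSigma h^{3/2}$, in agreement with \eqref{eq:Iexp}. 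Summing over $\cx_0\in\Sigma$ and using $\Lambda_1^{\cx_0}\ge\CBSigma$ yields the claimed inequality with a constant $C_3$ independent of $h$.
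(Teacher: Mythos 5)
Your proof is correct and takes essentially the same route as the paper: localize $u_3$ onto each ball $\cB(\cx_0,h^d)$, apply the perturbative inequality \eqref{eq:minoeta} with $\Astar=\AAA^{\cx_0}$, use $|\AAA-\AAA^{\cx_0}|\le C h^{4d}$ on the ball, identify $\inf\Spec\bigl((-ih\nabla+\AAA^{\cx_0})^2\bigr)=h^{3/2}\Lambda_1^{\cx_0}$ via the unitary equivalence and Lemma \ref{lem:scal}, and choose $\eta=h^{4d-3/4}$ to equalize the two error terms. Your extra remark that $d>\tfrac{3}{16}$ is exactly what makes $\eta\to0$ (so the $(1-\eta)$ prefactor is harmless) is a useful explicit observation that the paper leaves implicit.
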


\begin{proof}
For $h$ small enough the set $\Sigma^{[3]}$ is the union of the balls $\cB(\cx_0,h^d)$, with $\cx_0$ spanning $\Sigma$. It suffices to consider each point $\cx_0$ separately. We denote by $u^{\cx_0}_3$ the restriction of $u_3$ to $\cB(\cx_0,h^d)$ and use the perturbative lower bound \eqref{eq:minoeta} with $\Astar=\AAA^{\cx_0}$ the third order Taylor expansion of $\AAA$ at $\cx_0$:
\[
   \forall \eta>0, \quad 
   \Fq(u^{\cx_0}_3) \geq   (1-\eta) \mathcal{Q}_{h,\Omega}^{\AAA^{\cx_0}}(u^{\cx_0}_3)
    - \eta^{-1}\|(\AAA-\AAA^{\cx_0})u^{\cx_0}_3\|^2\,.
\]
Recall from the introduction that the magnetic field associated with $\AAA^{\cx_0}$ is $\BB^{\cx_0}$, and that the  magnetic operator $(-ih\nabla+\AAA^{\cx_0})^2$ is unitarily equivalent to $(-ih\nabla+\Xi(\cx_0)\AAA_{\varepsilon(\cx_0)})^2$  whose first eigenvalue is $h^{3/2}\Lambda^{\cx_0}_1$. Besides, 
\[
   |\AAA(\cx)-\AAA^{\cx_0}(\cx)| \le C h^{4d},\quad\quad \forall\cx\in\cB(\cx_0,h^d).
\]
So we have
\[
   \forall \eta>0, \quad 
   \Fq(u^{\cx_0}_3) \geq   \big((1-\eta) h^{3/2}\Lambda^{\cx_0}_1
    - \eta^{-1}Ch^{8d}\big)\|u^{\cx_0}_3\|^2\,.
\]
Choosing $\eta=h^{4d-3/4}$ to equalize the remainders, we deduce the inequality $\Fq(u^{\cx_0}_3) \geq \big( h^{3/2}\Lambda^{\cx_0}_1
    - Ch^{3/4+4d}\big)\|u^{\cx_0}_3\|^2$. Taking the infimum over all the points of the finite set $\Sigma$ gives the result.
\end{proof}

We can now conclude with the proof of Theorem \ref{thmequcroixINTRO}.

\begin{proof}[Proof of Theorem \ref{thmequcroixINTRO}]
We gather the estimates provided by Lemmas \ref{lem:eq:2}--\ref{lem:eq:12} and combine them with the localization estimate \eqref{eq:1b} and obtain that $\Fq(u)$ is bounded from below by
\begin{equation*}
   c_1 h^{4/3} \|u_1\|^2 + c_2 h^{4/3+2d/3} \|u_2\|^2 +
   \big(\CBSigma  h^{3/2} - C_3h^{3/4+4d}\big) \|u_3\|^2
   -\Cims h^{2-2d} 
   \|u\|^2.
\end{equation*}
Since $d<\frac14$, then $h^{2-2d}\ll h^{4/3+2d/3}$ and the lower bound above can be replaced by
\begin{multline*}
   c h^{4/3+2d/3} (\|u_1\|^2+\|u_2\|^2) +
   \big(\CBSigma  h^{3/2} - C_3h^{3/4+4d}-\Cims h^{2-2d}\big) \|u_3\|^2 \\
   \ge c h^{4/3+2d/3} \|(1-\chi)u\|^2 + 
   \big(\CBSigma  h^{3/2} - Ch^{\min\{3/4+4d,2-2d\}}\big) \|\chi u\|^2
\end{multline*}
with $\chi$ the characteristic function of the set $\{\cx\in\Omega,\: \dist(\cx,\Sigma)<h^d\}$. The theorem is proved.
\end{proof}

\subsection{Agmon estimates}
Let $d\in(\frac{3}{16},\frac14)$ and $L>0$. We consider an eigenpair $(\lambda(h),\psi_{h})$ of $\Op$ such that $\lambda(h)\leq L h^{3/2}$. For all function $\Phi\in W^{1,\infty}(\Omega)$, we have $\re^{\Phi}\psi_{h}\in \Dom(\Fq)$ and
\begin{equation}\label{agmonIDd}
   \Fq(\re^{\Phi}\psi_{h}) =
\lambda(h)\| \re^{\Phi}\psi_{h}\|^2+h^2\left\||\nabla\Phi| \,\re^{\Phi}\psi_{h}\right\|^2\,.
\end{equation}
Defining $\Phi(\cx)=\dist(\cx,\Sigma)h^{-d}$, we have
\begin{equation*}
   |\nabla\Phi(\cx)|^{2} = h^{-2d}\,.
\end{equation*}
Hence 
\begin{equation}\label{agmonRested}
   h^2 \| |\nabla\Phi|\,\re^{\Phi}\psi_{h}\|^2\le 
   h^{2-2d}\| \re^{\Phi}\psi_{h}\|^2.
\end{equation}
We denote
\[ 
   Z^{\sf near}_h=\{\cx\in\Omega,\  \dist(\cx,\Sigma)\le h^d \} \quad\mbox{and}\quad 
   Z^{\sf far}_h =\{\cx\in\Omega,\  \dist(\cx,\Sigma) > h^d \}\,.
\]
We introduce $\delta$ such that $\delta = \min\{3/4+4d,2-2d\}-\frac32$. Since $d\in(\frac{3}{16},\frac14)$,   the number $\delta$ is positive. Using \eqref{eq:bfb}-\eqref{eq:Ibfb} we obtain
\begin{align*}
   \Fq(\re^{\Phi}\psi_{h}) &\ge 
   \mathrm{c}_d \,h^{4/3+2d/3}\left\| \re^{\Phi}\psi_{h}\right\|^{2}_{L^2(Z^{\sf far}_h)} +
   \left(\CBSigma h^{3/2}-\mathrm{C}_d\,h^{3/2+\delta})\right) 
   \left\| \re^{\Phi}\psi_{h}\right\|^{2}_{L^2(Z^{\sf near}_h)}\\
   &\ge \mathrm{c}_d \,h^{4/3+2d/3}\left\| \re^{\Phi}\psi_{h}\right\|^{2}_{L^2(Z^{\sf far}_h)} 
   -\mathrm{C}_d\,h^{3/2+\delta}
   \left\| \re^{\Phi}\psi_{h}\right\|^{2}_{L^2(Z^{\sf near}_h)}
   \,. 
\end{align*}
Combining the latter inequality with \eqref{agmonIDd} and \eqref{agmonRested}, we obtain (using the fact that $\lambda(h)\leq L h^{3/2}$)
\[
   \left(\mathrm{c}_d \,h^{4/3+2d/3} - Lh^{3/2} - h^{2-2d}\right)
   \| \re^{\Phi}\psi_{h}\|^2_{L^2(Z^{\sf far}_h)} 
   \le (Lh^{3/2} + h^{2-2d} + \mathrm{C}_d\,h^{3/2+\delta}) 
   \| \re^{\Phi}\psi_{h}\|^{2}_{L^2(Z^{\sf near}_h)}
\]
from which we immediately deduce that for $h$ small enough
\[
   \left\| \re^{\Phi}\psi_{h}\right\|^{2}_{L^2(Z^{\sf far}_h)}\le 
   \left\| \re^{\Phi}\psi_{h}\right\|^{2}_{L^2(Z^{\sf near}_h)}\,.
\]
Since by construction $|\Phi|$ is bounded by $1$ on $Z^{\sf near}_h$, there holds
\[
    \left\| \re^{\Phi}\psi_{h}\right\|^{2}_{L^2(Z^{\sf near}_h)}
    \le \left\| \psi_{h}\right\|^{2}_{L^2(Z^{\sf near}_h)}\,,
\]  
we finally get $\| \re^{\Phi}\psi_{h}\|^{2}_{L^2(\Omega)} \le \| \psi_{h}\|^{2}_{L^2(\Omega)}$, which implies the Agmon estimates \eqref{eq:Ag}.
\medskip

\section{Asymptotic expansions of eigenvalues}\label{sec.3}
In this section, we prove Theorem \ref{thm:asy1} in two steps. First, the localization around each crossing point $\cx_0\in\Sigma$, and, second, a perturbation argument for the magnetic potential around each crossing point. We conclude the section by stating a full asymptotic expansion for eigenvalues and a sketch of the proof.

\subsection{Preliminaries}
We will use several times an argument based on reciprocal quasimodes between two operators. We state this in a general form.

\begin{lem}
\label{lem:PPstar}
Let $\cP$ and $\cP\sta$ two positive operators with discrete spectra associated with sesqui\-linear forms $a$ and $a\sta$, respectively. Let $(\mu_n)$ and $(\mu\sta_n)$ be the increasing sequences of their eigenvalues (counted with multiplicity). Let $(\varphi_n)$ be an associated orthonormal basis of eigenvectors for $\cP$. Let $N$ be a positive integer and assume that for each $n=1,\ldots,N$, there exists $\varphi\sta_n\in\Dom(a\sta)$ such that
\begin{subequations}
\begin{equation}
\label{eq:fista}
   \langle \varphi\sta_n,\varphi\sta_m\rangle =  \langle \varphi_n,\varphi_m\rangle + \nu_{n,m}
   \quad\mbox{and}\quad
   a\sta(\varphi\sta_n,\varphi\sta_m) =   a(\varphi_n,\varphi_m) + \mu_{n,m}
\end{equation}
and set
\begin{equation}
\label{eq:numu}
   \nu = \max_{n,m} |\nu_{n,m}|\quad\mbox{and}\quad \mu = \max_{n,m} |\mu_{n,m}|
\end{equation}
\end{subequations}
Assume that $\nu<\frac{1}{N}$. Then
\begin{equation}
\label{eq:PPstar}
   \mu\sta_n \le \frac{\mu_n + n\mu}{1-n\nu}, \quad n=1,\ldots,N.
\end{equation}
\end{lem}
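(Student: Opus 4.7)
The plan is to use the min-max (Rayleigh-Ritz) characterization of the eigenvalues of $\cP\sta$, using the approximate eigenvectors $\varphi\sta_1,\ldots,\varphi\sta_N$ as a finite-dimensional test family. Recall that for a positive operator with discrete spectrum,
\[
   \mu\sta_n = \min_{\substack{V\subset \Dom(a\sta)\\ \dim V=n}}\ \max_{0\ne v\in V} \frac{a\sta(v,v)}{\|v\|^2}.
\]
The natural candidate is $V_n=\spa(\varphi\sta_1,\ldots,\varphi\sta_n)$; my first task will be to check that, under the smallness assumption $\nu<1/N$, these $n\le N$ vectors are linearly independent, so that $V_n$ has the right dimension.

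Linear independence will drop out of the lower bound on the Gram matrix. Writing $v=\sum_{k=1}^n c_k\varphi\sta_k$ and using \eqref{eq:fista} together with the orthonormality of the $(\varphi_k)$, I get
\[
   \|v\|^2 = \sum_{k=1}^n |c_k|^2 + \sum_{k,l=1}^n c_k\bar c_l \,\nu_{k,l}.
\]
The remainder is controlled by the crude bound $|\sum_{k,l} c_k\bar c_l\nu_{k,l}|\le \nu\bigl(\sum_k|c_k|\bigr)^2\le n\nu\sum_k |c_k|^2$, giving $\|v\|^2\ge (1-n\nu)\sum_k|c_k|^2$. This both furnishes independence (no non-trivial vanishing combination) and provides the denominator in the target bound.

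For the numerator, the same expansion applied to $a\sta$ yields
\[
   a\sta(v,v) = \sum_{k=1}^n \mu_k|c_k|^2 + \sum_{k,l=1}^n c_k\bar c_l\,\mu_{k,l},
\]
since $a(\varphi_k,\varphi_l)=\mu_k\delta_{kl}$. Using $\mu_k\le \mu_n$ for $k\le n$ and the same Cauchy–Schwarz trick for the $\mu_{k,l}$ term, I get $a\sta(v,v)\le (\mu_n+n\mu)\sum_k|c_k|^2$. Dividing the two estimates eliminates $\sum_k|c_k|^2$ and yields the pointwise Rayleigh quotient bound $a\sta(v,v)/\|v\|^2\le (\mu_n+n\mu)/(1-n\nu)$; taking the maximum over $V_n$ and then applying the min-max inequality gives \eqref{eq:PPstar}.

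The only subtlety is the off-diagonal estimate: one must be careful not to lose a factor worse than $n$ when absorbing the cross terms, otherwise the hypothesis $\nu<1/N$ would not suffice to keep the denominator positive. Apart from that bookkeeping, the argument is a textbook application of min-max to a quasi-orthonormal family of approximate eigenvectors, so I do not expect any genuine obstacle.
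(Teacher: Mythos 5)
Your proof is correct and follows essentially the same route as the paper: apply the min-max characterization to the test subspace spanned by $\varphi\sta_1,\ldots,\varphi\sta_n$, expand the Gram and form matrices via \eqref{eq:fista}, absorb the off-diagonal perturbations using $\bigl(\sum_k|c_k|\bigr)^2\le n\sum_k|c_k|^2$, and divide the resulting bounds. The only difference is cosmetic: you make explicit the linear-independence check (via the lower bound on the Gram matrix) that the paper leaves implicit.
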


\begin{proof}
Let $M\le N$.
By the min-max formula, 
\[
   \mu\sta_M \le \max_{\varphi\in\spa\{\varphi\sts_n,\,n=1,\ldots,M\}}
   \frac{a\sta(\varphi,\varphi)}{\langle\varphi,\varphi\rangle}
\]
We write $\varphi$ as a sum $\sum_n\gamma_n\varphi\sta_n$. Then
\[
\begin{aligned}
   a\sta(\varphi,\varphi) &= \sum_{n,m} \gamma_n\bar\gamma_m a\sta(\varphi\sta_n,\varphi\sta_m) \\
   &= \sum_n|\gamma_n|^2 \mu_n + \sum_{n,m}\gamma_n\bar\gamma_m\mu_{n,m}
   \le (\mu_M + M \mu ) \sum_n|\gamma_n|^2.
\end{aligned}
\]
Likewise
\[
\begin{aligned}
   \langle\varphi,\varphi\rangle &= \sum_{n,m} \gamma_n\bar\gamma_m \langle\varphi\sta_n,\varphi\sta_m\rangle \\
   &= \sum_n|\gamma_n|^2 + \sum_{n,m}\gamma_n\bar\gamma_m\nu_{n,m}
   \ge (1 - M \nu ) \sum_n|\gamma_n|^2.
\end{aligned}
\]
Whence formula \eqref{eq:PPstar}
\end{proof}

We will also need a simple but useful consequence of Agmon estimates.

\begin{lem}
\label{lem:Ag+}
Assume that the family of function $(\psi_h)_{h>0}$ satisfies (for some $\gamma$ and $\delta>0$) the estimate
\[
   \int_{\R^2} \re^{\gamma|\cy|/h^\delta}|\psi_{h}(\cy)|^2\,\rd\cy
   \le C\| \psi_{h} \|^2\,,  
\]
for $h$ small enough and $C$ independent of $h$. Then for all $m>0$, there exists $C_m$ such that for $h>0$ small enough
\[
   \int_{\R^2} |\cy|^{m}\,|\psi_{h}(\cy)|^2\,\rd\cy
   \le C_m h^{m\delta}\,\| \psi_{h} \|^2\,.  
\]
\end{lem}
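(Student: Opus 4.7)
The plan is to derive the polynomial moment bound directly from the weighted $L^2$ hypothesis by a pointwise Young-type inequality. The key observation is that any polynomial growth is dominated by the exponential weight appearing in the hypothesis: for fixed $m>0$, the function $r\mapsto r^m \re^{-\gamma r/h^\delta}$ on $[0,\infty)$ attains its maximum at $r^\star = mh^\delta/\gamma$, with maximal value $(m/(\gamma e))^m h^{m\delta}$. Hence there exists a constant $C_m = (m/(\gamma e))^m$, independent of $h$, such that pointwise on $\R^2$,
\[
    |\cy|^m \leq C_m\, h^{m\delta}\, \re^{\gamma|\cy|/h^\delta}.
\]

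Multiplying this pointwise inequality by $|\psi_h(\cy)|^2$ and integrating over $\R^2$ yields
\[
    \int_{\R^2} |\cy|^m |\psi_h(\cy)|^2 \,\rd\cy \leq C_m\, h^{m\delta} \int_{\R^2} \re^{\gamma|\cy|/h^\delta}|\psi_h(\cy)|^2 \,\rd\cy,
\]
and the hypothesis bounds the right-hand integral by $C\|\psi_h\|^2$ for $h$ small enough. Absorbing $C_mC$ into a new constant (still denoted $C_m$) gives the claim.

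There is no serious obstacle here: this is the standard fact that finite exponential moments control all polynomial moments, rendered quantitative in the semiclassical parameter through the scale $h^\delta$ at which the exponential weight operates. The only item worth monitoring is the explicit $h$-dependence in the pointwise bound, which comes for free from locating the maximum of $r^m\re^{-\gamma r/h^\delta}$. Alternatively, one could split the integral into the regions $\{|\cy|\le R h^\delta\}$ and $\{|\cy|>R h^\delta\}$ for a large constant $R$, bounding $|\cy|^m$ trivially on the first region and absorbing it into the exponential weight on the second; both approaches produce the same $h^{m\delta}$ scaling.
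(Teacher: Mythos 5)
Your proof is correct and is essentially the same as the paper's: both rest on the pointwise bound $|\cy|^m \le \big(\max_{\rho>0}\rho^m\re^{-\gamma\rho/h^\delta}\big)\,\re^{\gamma|\cy|/h^\delta}$ followed by integration against $|\psi_h|^2$. The only cosmetic difference is that you locate the maximizer explicitly at $\rho^\star = m h^\delta/\gamma$, whereas the paper extracts the $h^{m\delta}$ factor by the substitution $\rho\mapsto h^\delta\rho$ without computing the maximum.
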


\begin{proof}
It suffices to write
\[
   \int_{\R^2} |\cy|^{m}\,|\psi_{h}(\cy)|^2\,\rd\cy \le
   \max_{\rho>0} \rho^{m} \re^{-\gamma\rho/h^\delta}
   \int_{\R^2} \re^{\gamma|\cy|/h^\delta}\,|\psi_{h}(\cy)|^2\,\rd\cy
\]
and notice that $\max_{\rho>0} \rho^{m} \re^{-\gamma\rho/h^\delta} = \max_{\rho>0} (h^\delta\rho)^{m} \re^{-\gamma\rho} =
h^{m\delta} \max_{\rho>0} \rho^{m} \re^{-\gamma\rho}$.
\end{proof}

\subsection{Localization}
Introduce a positive radius $r_0$ such that the collection of balls $\Omegazz$, $\cx_0\in\Sigma$, are pairwise disjoint. Then consider the collection of operators $\Oploc$ for $\cx_0\in\Sigma$. The main result of this section is:

\begin{lem}
\label{lem:loca}
Recall that $\lambda_n(h)$ is the increasing sequence of eigenvalues of $\Op$ (counted with multiplicity).
Denote by $\lambda\loc_n(h)$ the increasing sequence of eigenvalues of
\[
   \bigoplus_{\cx_0\in\Sigma}\: \Oploc\,.
\]
Let $d\in(\frac{3}{16},\frac14)$ and let $N$ be a positive integer. If $\lambda\loc_N(h)\le Lh^{3/2}$ for $h>0$ small enough, then there exist $C_N$ and $h_N>0$ such that for all $h\in(0,h_N)$
\[
   | \lambda_n(h) - \lambda\loc_n(h)| \le C_N \,\re^{-r_0/h^d},\quad \forall \ n=1,\ldots,N.
\]
\end{lem}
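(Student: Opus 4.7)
My plan is to apply Lemma~\ref{lem:PPstar} in both directions between $\Op$ and the decoupled direct sum $\bigoplus_{\cx_0\in\Sigma}\Oploc$, constructing the required quasimodes by truncating eigenfunctions with smooth cutoffs $\chi_{\cx_0}$ that equal $1$ on $\cB(\cx_0,r_0/2)$ and are supported in $\Omegazz$. The whole argument rests on Agmon exponential decay. The decay for $\Op$ is Theorem~\ref{agmon_gammad}; for each $\Oploc$ the same proof transfers verbatim since $\Sigma\cap\overline{\Omegazz}=\{\cx_0\}$ and the lower bound \eqref{eq:bfb}-\eqref{eq:Ibfb} is local in nature. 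This gives, for any eigenfunction $\varphi$ of $\Oploc$ with eigenvalue $\le Lh^{3/2}$, the estimate $\int_{\Omegazz}\re^{2\dist(\cx,\cx_0)/h^d}|\varphi|^2\,\rd\cx + h^{-3/2}\mathcal{Q}^{\AAA}_{h,\Omegazz}\bigl(\re^{\dist(\cdot,\cx_0)/h^d}\varphi\bigr)\le C\|\varphi\|^2$, both in $L^2$ and in the magnetic $H^1$ norm.

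For the bound $\lambda\loc_n(h)\le\lambda_n(h)+C_N\re^{-r_0/h^d}$, I take an orthonormal family $(\psi_n)_{n\le N}$ of eigenfunctions of $\Op$ and build $\Psi_n\in\bigoplus_{\cx_0}L^2(\Omegazz)$ whose $\cx_0$-component is $\chi_{\cx_0}\psi_n\in\Dom(\Oploc)$. Since the balls $\Omegazz$ are disjoint, the Gram matrix reads $\langle\Psi_n,\Psi_m\rangle=\int\bigl(\sum_{\cx_0}\chi_{\cx_0}^2\bigr)\psi_n\overline{\psi_m}\,\rd\cx$, and its defect from $\delta_{nm}$ is supported in $\{\dist(\cdot,\Sigma)\ge r_0/2\}$, hence of size $O(\re^{-r_0/h^d})$ by $L^2$-Agmon. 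The Leibniz identity $(-ih\nabla+\AAA)(\chi_{\cx_0}\psi_n)=\chi_{\cx_0}(-ih\nabla+\AAA)\psi_n-ih(\nabla\chi_{\cx_0})\psi_n$ gives a comparable estimate for the form matrix, yielding $\sum_{\cx_0}\mathcal{Q}^{\AAA}_{h,\Omegazz}(\chi_{\cx_0}\psi_n,\chi_{\cx_0}\psi_m)=\lambda_n(h)\delta_{nm}+O(\re^{-r_0/h^d})$. Lemma~\ref{lem:PPstar} then delivers the bound.

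The reverse inequality $\lambda_n(h)\le\lambda\loc_n(h)+C_N\re^{-r_0/h^d}$ is obtained symmetrically: I take eigenfunctions $\varphi_n$ for the first $N$ eigenvalues of the direct sum (each $\varphi_n$ supported in one ball $\cB(\cx_0(n),r_0)$), set $\Phi_n:=\chi_{\cx_0(n)}\varphi_n$ extended by $0$ to $\Omega$, and repeat. When $\cx_0(n)\ne\cx_0(m)$ the supports are disjoint, and otherwise the Gram and form entries are estimated as above using Agmon for $\Oploc$, which is available under the hypothesis $\lambda\loc_N(h)\le Lh^{3/2}$. I would prove this direction first, as it does not require Agmon for $\Op$; it produces $\lambda_N(h)\le Lh^{3/2}+o(h^{3/2})$, which in turn activates Agmon for $\Op$ in the other direction and closes the bootstrap.

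The main technical obstacle is the bookkeeping in the form matrix: $|\nabla\chi_{\cx_0}|$ is only $O(1)$, yet the cross terms produced by Leibniz must be exponentially small. They are supported in the annulus $\{r_0/2\le\dist(\cdot,\cx_0)\le r_0\}$, and the key trick is the factorization $\chi_{\cx_0}\psi=\bigl(\chi_{\cx_0}\re^{-\dist/h^d}\bigr)\bigl(\re^{\dist/h^d}\psi\bigr)$: this turns the $H^1$-Agmon weight sitting inside the norm into a pointwise factor bounded by $\re^{-r_0/(2h^d)}$ outside, multiplied by a quantity of order $h^{3/2}\|\psi\|^2$ (using the $H^1$-part of Agmon, and the fact that $2-2d>\tfrac32$ for $d<\tfrac14$ to absorb the $h^2|\nabla\chi|^2$ contribution). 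Once the entries are $O(\re^{-r_0/h^d})$, the condition $\nu<1/N$ in Lemma~\ref{lem:PPstar} is automatic for $h$ small, and \eqref{eq:PPstar} yields the claimed exponentially small separation.
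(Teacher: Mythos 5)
Your proof follows essentially the same route as the paper's: both comparisons come from Lemma~\ref{lem:PPstar} with quasimodes $\chi_{\cx_0}\psi_n$, and you correctly identify the bootstrap order (prove the direction relying on Agmon for $\Oploc$ first, since that decay is available from the hypothesis $\lambda\loc_N(h)\le Lh^{3/2}$, then use the resulting bound $\lambda_N(h)\lesssim h^{3/2}$ to activate Theorem~\ref{agmon_gammad} for $\Op$). Your discussion of the Leibniz cross terms and the factorization $\chi_{\cx_0}\psi=(\chi_{\cx_0}\re^{-\dist/h^d})(\re^{\dist/h^d}\psi)$ just fills in details the paper leaves implicit when it sets $\nu=\mu=C\re^{-r_0/h^d}$.
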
 

\begin{proof}
Choose for each $\cx_0\in\Sigma$ a smooth cut-off function $\chi_{\cx_0}$ with support in $\Omegazz$ and equal to $1$ on $\cB(\cx_0,\frac12 r_0)$.

\medskip\noindent
{\em i)} Use Lemma \ref{lem:PPstar} with $\cP\sta=\Op$ and $\cP=\oplus_{\cx_0\in\Sigma}\, \Oploc$ (acting on $\oplus_{\cx_0\in\Sigma}L^2(\cB(\cx_0,r_0))$): For each eigenvector $\varphi_n$ of $\cP$, one crossing point $\cx_0\in\Sigma$ is selected and we set
\[
   \varphi\sta_n = \chi_{\cx_0}\,\varphi_n \quad\mbox{defined on}\quad \Omega.
\]
Relying on the assumption that $\lambda\loc_n(h)\le Lh^{3/2}$ for $n=1,\ldots,N$, we may apply the Agmon estimates \eqref{eq:Ag} to the operator $\Oploc$. This yields conditions \eqref{eq:fista}-\eqref{eq:numu} considering $\nu=\mu=C\re^{-r_0/h^d}$. Hence $\lambda_n(h)\le \lambda\loc_n(h)+C_N \,\re^{-r_0/h^d}$. 

\medskip\noindent
{\em ii)} Conversely, we swap the roles of the two operators: $\cP=\Op$ and $\cP\sta=\oplus_{\cx_0\in\Sigma}\, \Oploc$. For each eigenvector $\varphi_n$ of $\cP$, we consider
\[
   \varphi\sta_n := \big( \chi_{\cx_0}\,\varphi_n\big)_{\cx_0\in\Sigma}\quad\mbox{defined on}\quad
   \coprod_{\cx_0\in\Sigma} \Omegazz\,.
\]
Note that, as a consequence of the previous step of the proof, we have that $\lambda_n(h)\le 2Lh^{3/2}$ for all $n=1,\ldots,N$.
As above, we conclude with the help of Agmon estimates for the operator $\Op$ that $\lambda\loc_n(h)\le \lambda_n(h)+C_N \,\re^{-r_0/h^d}$. The lemma is proved.
\end{proof}

\subsection{Taylor approximation of a localized operator}
With Lemma \ref{lem:loca} at hand, we can assume that $\Omega=\Omegazz$. Thus $\Sigma$ is reduced to one element, $\cx_0$. Recall that $\AAA^{\cx_0}$ denotes the third order Taylor expansion of $\AAA$ at the point $\cx_0$. We are going to consider the operator $\Opzz:=(-ih\nabla+\AAA^{\cx_0})^2$ posed on $\R^2$. We have seen in the introduction that the eigenvalues of $\Opzz$ are the $h^{3/2}\Lambda^{\cx_0}_n$, see \eqref{eq:Lam1}, and that its eigenvectors $\psi^{\cx_0}_{h,n}$ are scaled from the eigenvectors of $\Cr$ with $\varepsilon=\varepsilon(\cx_0)$ and $\Xi=\Xi(\cx_0)$. As a consequence of the exponential decay of the eigenvectors of $\Cr$ (Proposition \ref{2.16aux}) and the scaling provided by Lemma \ref{lem:scal}, we find that, for some positive constant $\gamma$ 
\begin{equation}
\label{eq:Ag14}
   \int_{\R^2} \re^{2\gamma|\cy|/h^{1/4}}|\psi^{\cx_0}_{h,n}(\cy)|^2\,\rd\cy +
   h^{-3/2} \Fqzz\big(\re^{\gamma|\cy|/h^{1/4}}\psi^{\cx_0}_{h,n}\big)
   \le C\| \psi^{\cx_0}_{h,n} \|^2\,.  
\end{equation}

\begin{lem}
\label{lem:tayapp}
With $\Omega=\Omegazz$, we denote by $\lambda^{\cx_0}_n(h)$ the eigenvalues of $\Op$. The eigenvalues of $\Opzz$ are given by $h^{3/2}\Lambda^{\cx_0}_n$. For any $d_0<\frac{1}{4}$ and any positive integer $N$, there exist $C_N$ and $h_N>0$ such that for all $h\in(0,h_N)$
\begin{equation}
\label{eq:tayapp}
    h^{3/2}\Lambda^{\cx_0}_n - C_N \,h^{3/2+d_0} \:\le\:
   \lambda^{\cx_0}_n(h) \:\le\:  h^{3/2}\Lambda^{\cx_0}_n + C_N \,h^{7/4},
   \quad n=1,\ldots,N.
\end{equation}
\end{lem}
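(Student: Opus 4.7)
The plan is to apply Lemma \ref{lem:PPstar} twice, exchanging the roles of $\Op$ and $\Opzz$, with quasimodes obtained by cutting off the eigenvectors of one operator and inserting them into the quadratic form of the other. The difference between $\Fq$ and $\Fqzz$ will be controlled through the perturbation identity \eqref{eq:diff} applied with $\Astar = \AAA^{\cx_0}$: the third order Taylor expansion gives $|\AAA(\cy) - \AAA^{\cx_0}(\cy)| \le C|\cy - \cx_0|^4$ on $\Omegazz$, and Lemma \ref{lem:Ag+} will convert this pointwise estimate into $L^2$ norm estimates through the Agmon localisation available on each side. Throughout, $\chi$ will denote a smooth cutoff equal to $1$ on $\cB(\cx_0, r_0/2)$ and supported in $\Omegazz$.

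For the upper bound, I would take $\cP = \Opzz$ and $\cP\sta = \Op$, with quasimodes $\varphi\sta_n := \chi\,\psi^{\cx_0}_{h,n}$ where the $\psi^{\cx_0}_{h,n}$ are the normalized eigenvectors of $\Opzz$. The Agmon estimate \eqref{eq:Ag14} holds at the scale $h^{1/4}$, so Lemma \ref{lem:Ag+} with $m = 8$ and $\delta = 1/4$ yields $\|(\AAA - \AAA^{\cx_0})\psi^{\cx_0}_{h,n}\|^2 \le C h^2$. Combining this with $\Fqzz(\psi^{\cx_0}_{h,n}) = h^{3/2}\Lambda^{\cx_0}_n$ and the Cauchy-Schwarz bound $|\Fq(u) - \Fqzz(u)| \le 2\Fqzz(u)^{1/2}\|(\AAA - \AAA^{\cx_0})u\| + \|(\AAA - \AAA^{\cx_0})u\|^2$ produces $\Fq(\varphi\sta_n) = h^{3/2}\Lambda^{\cx_0}_n + O(h^{7/4})$. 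The same exponential decay implies $\langle \varphi\sta_n, \varphi\sta_m\rangle = \delta_{nm}$ and, for the off-diagonal form values, the matching of $a\sta(\varphi\sta_n,\varphi\sta_m)$ with $h^{3/2}\Lambda^{\cx_0}_n\delta_{nm}$ up to exponentially small errors. Lemma \ref{lem:PPstar} then yields $\lambda^{\cx_0}_n(h) \le h^{3/2}\Lambda^{\cx_0}_n + C_N h^{7/4}$.

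For the lower bound, I swap the roles, taking $\cP = \Op$ and $\cP\sta = \Opzz$. The upper bound just obtained forces $\lambda^{\cx_0}_n(h) \le L h^{3/2}$, so Theorem \ref{agmon_gammad} (applied with $\Sigma = \{\cx_0\}$) provides Agmon estimates for the eigenvectors $\psi^h_n$ of $\Op$ at any scale $h^d$ with $d \in (\tfrac{3}{16}, \tfrac14)$. Given $d_0 < \tfrac14$, I pick $d \in (\tfrac{3}{16} + \tfrac{d_0}{4}, \tfrac14)$, so that $\tfrac34 + 4d > \tfrac32 + d_0$, and set $\varphi\sta_n := \chi\,\psi^h_n$, extended by zero to $\R^2$ (this is legitimate since $\chi\psi^h_n$ vanishes in a neighbourhood of $\partial\Omegazz$). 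Lemma \ref{lem:Ag+} with $\delta = d$ now gives $\|(\AAA - \AAA^{\cx_0})\psi^h_n\|^2 \le C h^{8d}$, and combined with $\Fq(\psi^h_n) \le L h^{3/2}$ the analogous Cauchy-Schwarz estimate yields $|\Fqzz(\varphi\sta_n) - \Fq(\varphi\sta_n)| \le C h^{3/4 + 4d}$ (the quadratic term $h^{8d}$ is smaller because $d > \tfrac{3}{16}$). A second application of Lemma \ref{lem:PPstar} delivers $h^{3/2}\Lambda^{\cx_0}_n \le \lambda^{\cx_0}_n(h) + C_N h^{3/4 + 4d}$, which is the claimed lower bound.

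The main delicate point will be that the off-diagonal quantities $\nu_{n,m}$, $\mu_{n,m}$ required by Lemma \ref{lem:PPstar} must be controlled uniformly in all pairs $n, m \le N$: this is precisely where the exponential decay of the model eigenvectors (and, in the second direction, of the eigenvectors of $\Op$) enters in an essential way. The asymmetry between the two final bounds $h^{7/4}$ and $h^{3/2 + d_0}$ mirrors exactly the asymmetry between the two Agmon scales available, namely $h^{1/4}$ for $\Opzz$ (dictated by the cubic magnetic potential of $\Cr$) and the strictly larger $h^d$ for $\Op$ (which cannot be improved in Theorem \ref{agmon_gammad}).
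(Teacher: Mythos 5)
Your proposal is correct and follows essentially the same route as the paper's proof: both apply Lemma \ref{lem:PPstar} twice with cutoff eigenvectors as reciprocal quasimodes, use the perturbation identity \eqref{eq:diff} with $\Astar=\AAA^{\cx_0}$ together with Lemma \ref{lem:Ag+} (at scale $h^{1/4}$ via \eqref{eq:Ag14} for the model, and at scale $h^d$ via Theorem \ref{agmon_gammad} for $\Op$), and trace the asymmetry $h^{7/4}$ versus $h^{3/2+d_0}$ to the two available Agmon scales. The only cosmetic difference is that you take $d$ in the open interval $(\tfrac{3}{16}+\tfrac{d_0}{4},\tfrac14)$ where the paper fixes $d=\tfrac{3}{16}+\tfrac{d_0}{4}$; both choices deliver the stated bound.
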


\begin{proof}
The proof combines Lemma \ref{lem:PPstar} with the perturbation identity \eqref{eq:diff}. We still use the cut-off function $\chi_{\cx_0}$ as in the proof of Lemma \ref{lem:loca}.

\medskip\noindent
{\em i)} Use Lemma \ref{lem:PPstar} with $\cP=\Opzz$ and $\cP\sta=\Op$. For each eigenvector $\varphi_n$ of $\cP$, we consider the quasimode $\varphi\sta_n = \chi_{\cx_0}\varphi_n$ for $\Opzz$. The localization error is exponentially decaying thanks to \eqref{eq:Ag14} but the principal part of the discrepancy for the quasimodes arises from the difference between $\AAA$ and $\AAA^{\cx_0}$. Thus the bound $\nu$ in \eqref{eq:numu} satisfies (for some $r_0>0$)
\[
   \nu \le C \re^{-r_0/h^{1/4}}.
\]
For estimate the diagonal terms $\mu_{n,n}$ in \eqref{eq:fista}, we use the identity \eqref{eq:diff} for $\Astar=\AAA^{\cx_0}$. Then
\[
\begin{aligned}
   \Fq(\chi_{\cx_0}\psi^{\cx_0}_{h,n}) 
   &= \Fqzz(\chi_{\cx_0}\psi^{\cx_0}_{h,n}) 
   + 2 \Re \big\langle (-ih\nabla+\AAA^{\cx_0})\chi_{\cx_0}\psi^{\cx_0}_{h,n},
   (\AAA-\AAA^{\cx_0})\chi_{\cx_0}\psi^{\cx_0}_{h,n}\big\rangle \\
  &\qquad + \|(\AAA-\AAA^{\cx_0})\chi_{\cx_0}\psi^{\cx_0}_{h,n}\|^2.
\end{aligned}
\]
Hence the difference $\mu_{n,n}:=\Fq(\chi_{\cx_0}\psi^{\cx_0}_{h,n})- \Fqzz(\chi_{\cx_0}\psi^{\cx_0}_{h,n})$ is estimated by
\[
   |\mu_{n,n}| \le 2\big(\Fqzz(\chi_{\cx_0}\psi^{\cx_0}_{h,n})\big)^{1/2} 
   \|(\AAA-\AAA^{\cx_0})\chi_{\cx_0}\psi^{\cx_0}_{h,n}\| +
   \|(\AAA-\AAA^{\cx_0})\chi_{\cx_0}\psi^{\cx_0}_{h,n}\|^2
\]
Using the Agmon estimates \eqref{eq:Ag14} and Lemma \ref{lem:Ag+} with $\delta=\frac14$ and $m=8$, we find 
\[
   |\mu_{n,n}| \le C(h^{3/4} h + h^2) .
\]
The reasonning is similar for $\mu_{n,m}$, $n\neq m$.
Hence the right part of inequalities \eqref{eq:tayapp}.

\medskip\noindent
{\em ii)} For the left part of \eqref{eq:tayapp}, we swap the roles of $\Opzz$ and $\Op$. The sole difference consists in Agmon estimates for the truncated eigenvectors of $\Op$. Thanks to the right part of \eqref{eq:tayapp}, Agmon estimates \eqref{eq:Ag} hold and then Lemma \ref{lem:Ag+} with $\delta=d\in (\frac{3}{16},\frac14)$ and $m=8$, from which we deduce
\[
   |\mu_{n,n}| \le C(h^{3/4} h^{4d} + h^{8d}) .
\]
Choosing $d=\frac{3}{16}+\frac{d_0}{4}$, we obtain the left part of \eqref{eq:tayapp}.
\end{proof}

\subsection{Expansion of eigenvalues}
Putting together Lemmas \ref{lem:loca} and \ref{lem:tayapp}, we can see that Lemma \ref{lem:tayapp} provides the bound $\lambda\loc_n(h)\le Lh^{3/2}$ which validates the application of Lemma \ref{lem:loca}. Therefore, we have now proved:

\begin{lem}
Under Assumption \textup{\ref{hypcadre2}}, for any $d<\frac14$ and any positive integer $N$, there exist $C_N>0$ and $h_0>0$ such that, for all $h\in (0,h_0)$ and all $n=1,\ldots,N$
\begin{equation}
\label{eq:asy1}
    h^{3/2}\Lambda^{\BB}_n - C_N \,h^{3/2+d} \:\le\:
   \lambda_n(h) \:\le\:  h^{3/2}\Lambda^{\BB}_n + C_N \,h^{7/4},
   \quad n=1,\ldots,N.
\end{equation}
\end{lem}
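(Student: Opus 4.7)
This is essentially a direct combination of Lemmas \ref{lem:loca} and \ref{lem:tayapp}, as already announced in the paragraph above. The plan unfolds in three short moves.

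First, I would apply Lemma \ref{lem:tayapp} on each of the balls $\Omegazz$ for $\cx_0 \in \Sigma$, taking the parameter $d_0$ there equal to the $d$ of the present statement (or, if $d \le 0$, any value in $(0,1/4)$, since larger $d_0$ only strengthens the conclusion). This gives, for every $n \le N$ and every $\cx_0 \in \Sigma$,
\[
   h^{3/2}\Lambda^{\cx_0}_n - C_N\, h^{3/2+d} \:\le\: \lambda^{\cx_0}_n(h) \:\le\: h^{3/2}\Lambda^{\cx_0}_n + C_N\, h^{7/4},
\]
and in particular $\lambda^{\cx_0}_n(h) \le L\, h^{3/2}$ for some $L = L(N)$. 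Next, I consider the direct sum $\bigoplus_{\cx_0\in\Sigma} \Oploc$, whose eigenvalue sequence $(\lambda\loc_n(h))$ is the sorted merge of the individual $\lambda^{\cx_0}_n(h)$; by the very construction \eqref{eq:Lam1}--\eqref{eq:Lam2} the corresponding reordering of the $\Lambda^{\cx_0}_n$ is precisely $\Lambda^\BB_n$. Since $\sharp\Sigma$ is finite and the local bounds are uniform, a short sorting argument yields
\[
   h^{3/2}\Lambda^\BB_n - C'_N\, h^{3/2+d} \:\le\: \lambda\loc_n(h) \:\le\: h^{3/2}\Lambda^\BB_n + C'_N\, h^{7/4}, \quad n \le N,
\]
together with the a priori bound $\lambda\loc_N(h) \le L\, h^{3/2}$.

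Second, this a priori bound is exactly the hypothesis required by Lemma \ref{lem:loca}. I apply it with some fixed $d^* \in (3/16, 1/4)$ chosen independently of $d$ (for instance $d^* = 7/32$), obtaining
\[
   |\lambda_n(h) - \lambda\loc_n(h)| \:\le\: C_N\, \re^{-r_0/h^{d^*}}, \quad n \le N.
\]
The exponentially small remainder is absorbed by both $h^{7/4}$ and $h^{3/2+d}$ for $h$ small enough, so adding this to the previous estimate delivers exactly \eqref{eq:asy1}.

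I do not foresee any real obstacle: the whole argument is bookkeeping between the local Taylor approximation of Lemma \ref{lem:tayapp} at each crossing point and the exponentially accurate localization of Lemma \ref{lem:loca}. The only point that deserves a moment's care is the reindexing step that identifies the merged sorted sequence of the local $\Lambda^{\cx_0}_n$ with the master sequence $(\Lambda^\BB_n)$ defined in \eqref{eq:Lam2}; but because $\sharp\Sigma < \infty$ and the local approximations hold uniformly, this reindexing introduces no additional loss.
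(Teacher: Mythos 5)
Your proof is correct and follows the same route as the paper, which itself proves this lemma in one sentence by ``putting together Lemmas~\ref{lem:loca} and~\ref{lem:tayapp}''. You simply spell out the merge-sort bookkeeping and the $d\le 0$ degeneracy a bit more explicitly, but nothing differs in substance.
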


To go further, we are going to exhibit, for each $n$, series expansions of eigenpairs. Owing to the exponential localization given by Lemma \ref{lem:loca}, it suffices to restrict the construction to any chosen localized operator $\Oploc$. In order to alleviate notations, we will remove the mention of $\cx_0$ in general, and work in the Cartesian coordinates $\cy$ for which the crossing point is at the origin. Then the domain $\Omega$ is the ball $\cB(0,r_0)$ and the magnetic field cancels to the order $2$ at $0$. After a possible change of gauge, we can assume that the magnetic potential $\AAA$ cancels to the order $3$ at $0$. We write its Taylor formal series as
\[
   \AAA \sim \sum_{j\ge0} \AAA_j\quad\mbox{where}\quad  
   \mbox{$\AAA_j$ is polynomial and homogeneous of degree $3+j$.}
\]
The first nonzero term is $\AAA_0$ formerly denoted by $\AAA^{\cx_0}$.
We retrieve the principal part of $\Op$ at $0$, and its natural expansion in powers of $h^{1/4}$ by considering, via the change of variables $\cY = \cy/h^{1/4}$:
\[
   h^{-3/2}\Op[\cy,\nabla_\cy] =  
   h^{-3/2} \mathcal{P}_{h,\Omega/h^{1/4}}^{\AAA}[h^{1/4}\cY,h^{-1/4}\nabla_\cY]\,.
\] 
We expand the right hand side as a formal series of operators $\sum_{j} h^{j/4}\cL_j[Y,\nabla_\cY]$ defined on $\R^2$. We have
\[
   \sum_{j} h^{j/4}\cL_j[Y,\nabla_\cY] =
   h^{-3/2} \Big(-ih^{3/4}\nabla_\cY+\sum_{j\ge0} h^{(3+\ell)/4}\AAA_j(\cY)\Big)^2.
\]
Hence the series $\sum_{j} h^{j/4}\cL_j$ starts with $\cL_0$ given by
\[
   \cL_0 = \big(-i\nabla_\cY+\AAA_0(\cY)\big)^2
\]
and the other terms $\cL_j$ for $j\ge1$ are partial differential operators of degree $1$ with polynomial coefficients. The main term $\cL_0$ is isospectral to $(-i\nabla+\Xi(\cx_0)\AAA_\varepsilon(\cx_0))^2$, see \eqref{eq:OpSigma}, and its eigenvalues are given by the $\Lambda^{\cx_0}_n$ ($n\in\N^*$), see \eqref{eq:Lam1}.

Choose a normalized eigenpair of $\cL_0$, which we denote by $(\ell_0,\Psi_0)$. We look for  
\[
   \ell_j\in\R \quad\mbox{and}\quad \Psi_j\in\Dom(\cL_0),\quad j=1,2,\ldots
\]
solving 
$
   \big(\sum_{j} h^{j/4}\cL_j\big)\big(\sum_k h^{k/4}\Psi_k\big) =
   \big(\sum_{j} h^{j/4}\ell_j\big)\big(\sum_k h^{k/4}\Psi_k\big)
$
in the sense of formal series,
i.e., solving the sequence of equations, 
\[
   \sum_{j=0}^m \cL_j \Psi_{m-j} =
   \sum_{j=0}^m \ell_j \Psi_{m-j}\quad m=1,2,\ldots.
\]
We write the first equation (for $m=1$) as
\[
   (\cL_0 -\ell_0)\Psi_1 = \ell_1\Psi_0 - \cL_1\Psi_0\,,
\]
and the next ones as
\[
   (\cL_0 -\ell_0)\Psi_m = \ell_m\Psi_0 - \cL_m\Psi_0 + \sum_{j=1}^{m-1} (\ell_j-\cL_j) \Psi_{m-j}\,.
\]
If $\ell_0$ is a {\em simple} eigenvalue of $\cL_0$, the solution of such a sequence of equations is classical, resulting from the Fredholm alternative for the self-adjoint operator $\cL_0$. For instance, we get 
\begin{equation}
\label{eq:Fred}
   \ell_1 = \langle \cL_1\Psi_0,\Psi_0\rangle
\end{equation}
if $m=1$. If $\ell_0$ is a {\em multiple} eigenvalue of $\cL_0$, we cannot choose a priori an associated eigenvector, but have to work in the whole associated eigenspace $E_{\ell_0}$. Then identity \eqref{eq:Fred} is replaced by an eigen-equation for a finite dimensional hermitian matrix acting on the eigenspace $E_{\ell_0}$. The process can be pursed as well, see \cite{DDFR99} for details on this procedure. The terms $\Psi_m$ belong to the domain of $\cL_0$ and have, furthermore, exponential decay. Setting for $m\ge1$
\[
   \ell^{[m]}(h) = h^{3/2}\sum_{j=0}^m h^{j/4}\ell_j \quad\mbox{and}\quad
   \psi_h^{[m]}(\cy) = \chi_{\cx_0}(\cy) \sum_{j=0}^m h^{j/4}\Psi_j(\cy/h^{1/4})
\]
we have constructed a quasimode for $\Op$ to the order $h^{3/2 + (m+1)/4}$, i.e.
\[
   \Op(\psi_h^{[m]}) = \ell^{[m]}(h) \psi_h^{[m]} + \rho^{[m+1]}_h\quad\mbox{with}\quad
   \|\rho^{[m+1]}_h\| \le C_m h^{3/2 + (m+1)/4} \|\psi^{[m]}_h\|.
\]
Combining this with \eqref{eq:asy1}, we deduce by the spectral theorem that there holds

\begin{thm}\label{thm:asy}
Under Assumption \textup{\ref{hypcadre2}}, for all integers $N\ge1$ and $M\ge0$, there exist coefficients
\[
   \Lambda^{\BB}_{n,m}\in\R^+\quad\mbox{for}\quad 1\le n\le N,\:\:0\le m\le M,
   \quad\mbox{with}\quad \Lambda^{\BB}_{n,0}=\Lambda^{\BB}_{n}
\]
and constants $C_{N,M}>0$ and $h_0>0$ such that, for all $h\in (0,h_0)$ and all $n=1,\ldots,N$
\[
   \Big|\lambda_n(h)-h^{3/2}\sum_{m=0}^Mh^{m/4}\Lambda_{n,m}^{\BB}\Big| \le C_N h^{3/2+(M+1)/4}\,.
\]
\end{thm}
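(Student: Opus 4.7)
The plan is to combine the localization reduction of Lemma \ref{lem:loca} with a formal power-series construction of quasimodes near each crossing point, and then convert the resulting quasimode estimates into two-sided eigenvalue bounds via the reciprocal-quasimode device of Lemma \ref{lem:PPstar}.

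First, by Lemma \ref{lem:loca} with $d\in(\tfrac{3}{16},\tfrac14)$, the eigenvalues $\lambda_n(h)$ coincide with those of the uncoupled direct sum $\bigoplus_{\cx_0\in\Sigma}\Oploc$ up to errors of order $\re^{-r_0/h^d}$, which are negligible against any $h^k$. It therefore suffices to analyze one operator $\Oploc$ and to reassemble spectra through the ordering \eqref{eq:Lam2}. Fix $\cx_0\in\Sigma$, use centered coordinates $\cy$, perform the change of variables $\cY = \cy/h^{1/4}$, and, after the gauge fix eliminating the constant part of $\AAA$ at $\cx_0$, expand the rescaled operator as the formal series $h^{3/2}\sum_{j\ge 0} h^{j/4}\cL_j$ already introduced, with $\cL_0 = (-i\nabla_\cY+\AAA_0(\cY))^2$ isospectral to the model $\Cr$ and $\cL_j$ for $j\ge 1$ a first-order operator with polynomial coefficients.

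Second, I solve the Fredholm hierarchy displayed above the theorem order by order. For a simple eigenvalue $\ell_0 = \Lambda_n^{\cx_0}$ of $\cL_0$, compatibility at order $m$ uniquely determines $\ell_m$ and $\Psi_m\in\ker(\cL_0-\ell_0)^\perp$. In the degenerate case, the compatibility conditions become a finite-dimensional Hermitian eigenproblem on the eigenspace $E_{\ell_0}$ that splits the degeneracy and simultaneously fixes $\Psi_0$ and $\ell_m$, following the Rayleigh--Schrödinger procedure of \cite{DDFR99}. The $\Psi_m$ inherit exponential decay inductively from Proposition \ref{2.16aux} since the $\cL_j$'s have polynomial coefficients. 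Truncating at order $M$, unscaling and multiplying by $\chi_{\cx_0}$ produces a quasimode $\psi_h^{[M]}$ for $\Op$ satisfying
\[
\|(\Op-\ell^{[M]}(h))\psi^{[M]}_h\|\le C_M\, h^{3/2+(M+1)/4}\,\|\psi^{[M]}_h\|,
\]
the localization error being exponentially small in $h$.

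Third, the spectral theorem, applied to the $N$-tuple of mutually asymptotically orthogonal quasimodes built in this way — orthogonality of the leading terms within each eigenspace $E_{\ell_0}$ combined with exponentially small overlap across distinct crossing points — yields the upper bound $\lambda_n(h)\le h^{3/2}\sum_{m=0}^M h^{m/4}\Lambda^{\BB}_{n,m}+C_{N,M}h^{3/2+(M+1)/4}$; the already-established rough estimate \eqref{eq:asy1} fixes the correct index $n$. The matching lower bound follows from the reciprocal direction of Lemma \ref{lem:PPstar}, exactly as in step \emph{ii)} of the proof of Lemma \ref{lem:tayapp}, now using the Agmon estimates \eqref{eq:Ag} for the true eigenfunctions of $\Op$, whose hypotheses are granted by the rough upper bound. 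The main delicate point I anticipate is the bookkeeping of degeneracies: one must verify at each order that the finite-dimensional eigenproblem on $E_{\ell_0}$ together with the exponential decay across distinct crossing points guarantees both asymptotic orthogonality of the $N$ first quasimodes and faithful preservation of the index in \eqref{eq:Lam2}. Once this combinatorial accounting is dispatched, the remaining arguments are mechanical consequences of the preceding lemmas.
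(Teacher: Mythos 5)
Your proposal follows essentially the same approach as the paper: localization via Lemma \ref{lem:loca}, rescaling $\cY=\cy/h^{1/4}$ with gauge reduction and formal expansion $\sum_j h^{j/4}\cL_j$, the Fredholm hierarchy with degenerate-case treatment as in \cite{DDFR99}, quasimode construction with exponentially decaying $\Psi_m$, and conclusion via the spectral theorem combined with the rough estimate \eqref{eq:asy1}. One small imprecision: the gauge transformation must eliminate the full Taylor polynomial of $\AAA$ up to degree two (not merely the constant part), so that the leading term $\AAA_0$ is homogeneous of degree three, but your subsequent expansion correctly reflects this.
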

\noindent
Of course, Theorem \ref{thm:asy1} is a particular case of the above statement if we choose $M=0$.

\section{Small angle limit}\label{PSA}
The first part of this section is devoted to theoretical results on the band function $(\alpha,\xi)\mapsto\varrho_1(\alpha,\xi)$ and to their numerical illustration. In the second part, we rely on these results to prove the convergence of eigenvalues of $\Cr$ in the small angle limit and present the computations of their first eigenstates for a set of small values of $\varepsilon$.

\subsection{Operator symbol and band function}
Here we study the behavior of the first eigenvalue $\varrho_1(\alpha,\xi)$ of the operator symbol $\sX_{\alpha, \xi}$, for $(\alpha,\xi)\in\R^2$ 
\[
   \sX_{\alpha, \xi} = D_t^2+\Big(\xi+ \alpha^2 t-\frac{t^3}{3}\Big)^2\,,
\]
acting on $L^2(\R)$, see \eqref{eq:Xalxi}. 

\subsubsection{Preliminaries}
Let us introduce the potential $\sV_{\alpha,\xi}$ of $\sX_{\alpha, \xi}$ and its generating polynomial $\sP_{\alpha,\xi}$:
\[
   \sP_{\alpha,\xi}(t) = \xi+ \alpha^2 t-\frac{t^3}{3} \quad\mbox{and}\quad
   \sV_{\alpha,\xi}(t) = \Big(\sP_{\alpha,\xi}(t)\Big)^2.
\]

\[
   \sV_{\alpha,\xi}(t) = \xi^2 + 2\xi\alpha^2t - 2\xi \frac{t^3}{3} + \alpha^4 t^2 
   - 2\alpha^2\frac{t^4}{3} + \frac{t^6}{9}
\]

The potential $\sV_{\alpha,\xi}$ depends smoothly on the parameters $(\alpha,\xi)$ and is confining for each value of $(\alpha,\xi)$. So there holds
\begin{prop}
For all $(\alpha,\xi)\in \R^2$, the operator $\sX_{\alpha,\xi}$ has a compact resolvent and
the family $\left(\sX_{\alpha, \xi}\right)$ with $(\alpha, \xi) \in \R^2$ is analytic (of type ($B$) according to Kato theory, see \cite{KATO}). 
\end{prop}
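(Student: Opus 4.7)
The plan is to handle the two assertions separately. For the compact resolvent, I would rely on the fact that $\sV_{\alpha,\xi}$ is a polynomial of degree $6$ in $t$ with positive leading coefficient $1/9$, hence $\sV_{\alpha,\xi}(t)\to+\infty$ as $|t|\to\infty$. This places the operator in the standard confining-potential setting on $\R$, for which compactness of the resolvent follows from classical Schr\"odinger operator theory: the form domain embeds compactly into $L^2(\R)$ via Rellich's theorem combined with the tightness provided by the growth of the potential at infinity.

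For the analyticity of type (B), the first and most substantial step is to identify a common form domain. Since only the top-degree term $t^6/9$ of $\sV_{\alpha,\xi}$ matters at infinity and is independent of $(\alpha,\xi)$, the natural candidate is $V=H^1(\R)\cap L^2(\R;(1+t^6)\,\rd t)$. I plan to check that $V$ coincides with the natural form domain $D_{\alpha,\xi}=\{u\in H^1(\R):\sP_{\alpha,\xi}\, u\in L^2(\R)\}$ for every $(\alpha,\xi)\in\R^2$. The inclusion $V\subset D_{\alpha,\xi}$ follows from the pointwise bound $|\sP_{\alpha,\xi}(t)|^2\le C_{\alpha,\xi}(1+t^6)$; the converse inclusion uses that $\sV_{\alpha,\xi}(t)\ge \tfrac{1}{18}t^6$ once $|t|$ is large enough, together with the boundedness of $t^6$ on any fixed compact set and the $L^2$ control coming from $u\in H^1\subset L^2$.

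Then, writing the sesquilinear form as $q_{\alpha,\xi}(u,v)=\langle u',v'\rangle_{L^2(\R)} + \int_\R \sP_{\alpha,\xi}(t)^2\,u(t)\,\overline{v(t)}\,\rd t$, I would observe that $\sP_{\alpha,\xi}(t)^2$ is polynomial in $(\alpha,\xi)$ with $t$-coefficients that lie in $L^2(|u|^2\,\rd t)$ for $u\in V$, so $(\alpha,\xi)\mapsto q_{\alpha,\xi}(u,v)$ extends to an entire function on $\mathbb{C}^2$ for each fixed $u,v\in V$, giving the holomorphic dependence required by Kato's definition. For real $(\alpha,\xi)$ the form is symmetric, non-negative and closed on $V$ (closedness follows from positivity and local boundedness of $\sV_{\alpha,\xi}$, via Fatou applied to a Cauchy sequence for the graph norm). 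For complex parameters in a neighborhood of $\R^2$, the form remains sectorial and closed because the imaginary part of $\sP_{\alpha,\xi}^2$ is controlled, in the form-bound sense, by its real part plus a constant.

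The main obstacle I foresee is the clean verification that $V=D_{\alpha,\xi}$ for \emph{every} $(\alpha,\xi)$, since this is precisely what reduces the problem from a family with a priori varying domains to the type (B) situation. Once this is in place, polynomial analyticity, closedness, and sectoriality combine via Kato's theorem (see \cite{KATO}) to deliver analyticity of type (B), from which analyticity of the resolvents (and in particular of the isolated eigenvalues used later in the paper) follows automatically.
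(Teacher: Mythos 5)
Your proposal is correct and takes the same approach the paper itself relies on (the paper simply asserts the result from the two facts that $\sV_{\alpha,\xi}$ is confining and depends polynomially on $(\alpha,\xi)$); you have merely supplied the standard details — identification of the common form domain, polynomial (hence entire) parameter dependence of the form, closedness, and sectoriality near $\R^2$ — that the paper leaves implicit.
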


Since the $\sX_{\alpha, \xi}$ are Sturm-Liouville operators, we obtain

\begin{cor}
For all $(\alpha,\xi)$, the eigenvalue $\varrho_1(\alpha,\xi)$ is simple and depends analytically of $\alpha$ and $\xi$. The associated eigenfunctions do not vanish and the unique normalized and positive eigenfunction $u_{\alpha,\xi}$ associated with $\varrho_1(\alpha,\xi)$ depends analytically of $(\alpha,\xi)$.
\end{cor}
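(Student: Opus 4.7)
The proof naturally decomposes into three parts: positivity of a ground state, simplicity of $\varrho_1(\alpha,\xi)$, and analytic dependence on $(\alpha,\xi)$. The preceding proposition has already done most of the work by identifying $(\sX_{\alpha,\xi})$ as an analytic family of type (B) with compact resolvents; what remains is to combine Sturm-Liouville structure with Kato's perturbation theory.

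First I would establish that there exists a strictly positive ground state. Since $\sX_{\alpha,\xi}$ has real coefficients, one can choose a real-valued normalized minimizer $u$ of the Rayleigh quotient. Because $\bigl\||u|'\bigr\|_{L^2(\R)}=\|u'\|_{L^2(\R)}$ and $\sV_{\alpha,\xi}|u|^2=\sV_{\alpha,\xi}u^2$, the function $|u|$ is also a minimizer; by the variational characterization it is an eigenfunction for $\varrho_1(\alpha,\xi)$, and therefore $C^2(\R)$ by elliptic regularity. If $u$ changed sign at some $t_0$, then $C^1$ regularity of $|u|$ would force $u(t_0)=u'(t_0)=0$, and Cauchy-Lipschitz uniqueness applied to the linear ODE $-u''+\sV_{\alpha,\xi}u=\varrho_1(\alpha,\xi)u$ would yield $u\equiv 0$, contradicting $\|u\|=1$. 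The same uniqueness argument rules out any interior zero of the sign-chosen non-negative eigenfunction (a zero there would be a minimum, forcing $u'=0$ at that point). Hence $u>0$ on $\R$ after fixing the sign.

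Second, simplicity follows at once: if the $\varrho_1(\alpha,\xi)$-eigenspace had dimension at least $2$, it would contain two real orthogonal normalized eigenfunctions; by the previous step each would be of constant sign and strictly nonzero on $\R$, which is incompatible with $L^2(\R)$-orthogonality. Finally, for the analytic dependence, the family being of type (B) and $\varrho_1(\alpha_0,\xi_0)$ being a simple isolated eigenvalue at every $(\alpha_0,\xi_0)\in\R^2$ allow one to invoke Kato's theorem (\cite{KATO}): both the eigenvalue and the associated rank-one spectral projector $\Pi_{\alpha,\xi}$ depend analytically on $(\alpha,\xi)$ in a complex neighborhood of $(\alpha_0,\xi_0)$, and the normalized positive eigenfunction is then recovered by applying $\Pi_{\alpha,\xi}$ to $u_{\alpha_0,\xi_0}$, normalizing, and fixing the sign by positivity. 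The main mild technicality is to check that this positive-sign normalization is globally consistent: it is, because the strict positivity condition unambiguously singles out the eigenfunction at every point, and continuity of the analytic Kato branch ensures that positivity persists in a full real neighborhood of each $(\alpha_0,\xi_0)$, yielding global joint real-analyticity of both $\varrho_1$ and $u_{\alpha,\xi}$.
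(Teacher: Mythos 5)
Your argument is correct and is essentially the standard Sturm--Liouville plus Kato type (B) reasoning that the paper invokes with the one-line remark ``since the $\sX_{\alpha,\xi}$ are Sturm--Liouville operators''; you have simply spelled out the $|u|$-minimizer trick, the ODE uniqueness step, and the simple-eigenvalue perturbation argument in full. No gap.
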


As a consequence we have the following \enquote{Feynman-Hellmann} formulas.

\begin{cor}\label{idFHmin}
For all $(\alpha,\xi)$ we have the following identities 
\begin{equation*}
(\partial_{\alpha}
\varrho_{1})(\alpha,\xi)=4\alpha\displaystyle{\int_{\R}\left(\xi+\alpha^2t-\frac{t^3}{3}\right)t u_{\alpha,\xi}^2(t)\,\rd t},
\end{equation*}
\begin{equation*}(\partial_{\xi}
\varrho_{1})(\alpha,\xi)=2\displaystyle{\int_{\R}\left(\xi+\alpha^2t-\frac{t^3}{3}\right)u_{\alpha,\xi}^2(t)\,\rd t}.
\end{equation*}
\end{cor}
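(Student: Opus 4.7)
The plan is to derive both identities by applying the standard Feynman--Hellmann principle to the real-analytic family $(\sX_{\alpha,\xi})$. The analytic framework has been set up in the preceding corollary: $\varrho_1(\alpha,\xi)$ is simple and depends analytically on $(\alpha,\xi)$, and so does the positive normalized eigenfunction $u_{\alpha,\xi}\in L^2(\R)$, which in particular is real-valued. This regularity is precisely what permits us to differentiate the eigenvalue equation with respect to the parameters.

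First I would compute the parameter derivatives of $\sX_{\alpha,\xi}=D_t^2+\sP_{\alpha,\xi}(t)^2$. Since $\partial_\alpha \sP_{\alpha,\xi}(t) = 2\alpha t$ and $\partial_\xi \sP_{\alpha,\xi}(t) = 1$, the chain rule gives, as multiplication operators,
\[
   \partial_\alpha \sX_{\alpha,\xi} = 4\alpha\, t\, \sP_{\alpha,\xi}(t)\quad\mbox{and}\quad \partial_\xi \sX_{\alpha,\xi} = 2\sP_{\alpha,\xi}(t).
\]
Next I would differentiate the eigenvalue equation $\sX_{\alpha,\xi} u_{\alpha,\xi} = \varrho_1(\alpha,\xi) u_{\alpha,\xi}$ in a parameter $\star\in\{\alpha,\xi\}$ and take the $L^2(\R)$ scalar product with $u_{\alpha,\xi}$. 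Self-adjointness of $\sX_{\alpha,\xi}$ combined with the normalization $\|u_{\alpha,\xi}\|=1$ (differentiated into $\langle \partial_\star u_{\alpha,\xi}, u_{\alpha,\xi}\rangle = 0$) cancels all contributions of $\partial_\star u_{\alpha,\xi}$, leaving
\[
   \partial_\star \varrho_1(\alpha,\xi) = \langle (\partial_\star \sX_{\alpha,\xi})\, u_{\alpha,\xi},\, u_{\alpha,\xi}\rangle.
\]
Substituting the two explicit expressions above then yields the two formulas of the corollary.

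The only technical point is to justify that the relevant scalar products make sense, namely that the polynomial moments $\int_\R t^k u_{\alpha,\xi}^2\,\rd t$ are finite up to $k=4$. This is harmless: the potential $\sV_{\alpha,\xi}$ is confining, of polynomial growth $\sim t^6$ at infinity, so standard Agmon-type arguments provide super-exponential decay of $u_{\alpha,\xi}$. In particular $u_{\alpha,\xi}$ lies in the form domain of the perturbed operators $\partial_\star \sX_{\alpha,\xi}$ and all polynomial moments converge. I therefore expect no serious obstacle beyond tracking the two chain-rule computations.
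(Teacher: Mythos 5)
Your proof is correct and follows exactly the route the paper intends: the paper simply labels the corollary as the ``Feynman--Hellmann'' formulas and leaves the derivation to the reader, relying on the analyticity and simplicity established in the preceding corollary. You have spelled out the computation of $\partial_\alpha \sX_{\alpha,\xi}$ and $\partial_\xi \sX_{\alpha,\xi}$ and correctly justified that the cross-terms involving $\partial_\star u_{\alpha,\xi}$ cancel, which is precisely the content being invoked; the remark on the finiteness of the polynomial moments is a welcome extra check that the paper leaves implicit.
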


The potential $\sV_{\alpha,\xi}$ has the following obvious symmetry properties: $\sV_{\alpha,\xi}(t)=\sV_{-\alpha,\xi}(t)$ and $\sV_{\alpha,-\xi}(t)=\sV_{\alpha,\xi}(-t)$. Hence the band function $\varrho_{1}(\alpha,\xi)$ is even with respect to each of the two variables $\alpha$ and $\xi$, so its analysis can be restricted to the first quadrant $\{\alpha\ge0, \xi\ge 0\}$.
The following lemma gives an expression of the roots of the generating polynomial $\sP_{\alpha,\xi}$, depending on the sign of its discriminant. 

\begin{lem}\label{racinesP}
For all $\alpha\ge0$ and all $\xi\ge0$, denote by $t_k(\alpha,\xi)$ the three roots of the polynomial $\sP_{\alpha,\xi}$, agreeing that
\[
   \Re t_1(\alpha,\xi) \le \Re t_2(\alpha,\xi) \le \Re t_3(\alpha,\xi).
\] 
Then, if $(\alpha,\xi)\neq(0,0)$, $t_3(\alpha,\xi)$ is a positive simple real root. More precisely we have
\begin{subequations}
\begin{itemize}
\item For $\xi<\frac{2}{3}\alpha^3$, the polynomial $\sP_{\alpha,\xi}$ admits three distinct real roots (that we denote $t_{1}(\alpha,\xi)<t_{2}(\alpha,\xi)<t_{3}(\alpha,\xi)$) given (for all $k\in\{1,2,3\}$) by
\begin{equation}
\label{eq:ta}
   t_{k}(\alpha,\xi)=j^{3-k}\sqrt[3]{\frac{1}{2}\left(3\xi+i\sqrt{4\alpha^6-9\xi^2}\right)}
   +j^{k-3}\sqrt[3]{\frac{1}{2}\left(3\xi-i\sqrt{4\alpha^6-9\xi^2}\right)},
\end{equation}
where $j$ is the complex number defined by $j=e^{2i\pi/3}=-\frac{1}{2}+i\frac{\sqrt{3}}{2}$.
\item For $\xi=\frac{2}{3}\alpha^3$, the polynomial $\sP_{\alpha,\xi}$ admits a simple real root and a double real root respectively given by
\begin{equation}
\label{eq:tb}
   t_{3}(\alpha,\xi)=2\alpha \ \text{and} \ t_{2}(\alpha,\xi)=t_{1}(\alpha,\xi)=-\alpha .
\end{equation}
\item For $\xi>\frac{2}{3}\alpha^3$, the polynomial $\sP_{\alpha,\xi}$ admits a unique real root given by 
\begin{equation}
\label{eq:tc}
   t_{3}(\alpha,\xi)=\sqrt[3]{\frac{1}{2}\left(3\xi+\sqrt{-4\alpha^6+9\xi^2}\right)}
   +\sqrt[3]{\frac{1}{2}\left(3\xi-\sqrt{-4\alpha^6+9\xi^2}\right)}.
\end{equation}
\end{itemize}
\end{subequations}
\end{lem}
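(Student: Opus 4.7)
\medskip\noindent
\textbf{Proof proposal.} The natural approach is to reduce the computation to a standard application of Cardano's formula. Multiplying $\sP_{\alpha,\xi}(t)=\xi+\alpha^2 t-\frac{t^3}{3}$ by $-3$ does not change its roots, so one can equivalently look for the roots of the depressed cubic
\[
   t^3-3\alpha^2 t-3\xi = 0.
\]
Following Cardano's substitution, I would set $t=u+v$ subject to the constraint $uv=\alpha^2$. Expansion yields $u^3+v^3=3\xi$. Hence $u^3$ and $v^3$ are the two roots of the quadratic resolvent
\[
   Z^2 - 3\xi\, Z + \alpha^6 = 0,
\]
whose discriminant is $9\xi^2-4\alpha^6$. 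The sign of this discriminant governs the three cases of the statement: it is negative when $\xi<\tfrac{2}{3}\alpha^3$, zero when $\xi=\tfrac{2}{3}\alpha^3$, and positive when $\xi>\tfrac{2}{3}\alpha^3$ (for $\alpha,\xi\ge 0$).

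In the first case, $u^3$ and $v^3$ are complex conjugates, and I would choose cube roots so that $v=\bar u$. The constraint $uv=\alpha^2$ is then equivalent to $|u|^2=\alpha^2$, which is indeed satisfied since $|u^3|^2=u^3\overline{u^3}=\alpha^6$. The three roots of the depressed cubic obtained from the three compatible pairs of cube roots are $u+v$, $j u+j^2 v$ and $j^2 u+j v$ with $j=e^{2i\pi/3}$, all three real because $v=\bar u$. Reordering according to increasing real parts produces formula \eqref{eq:ta}. In the second case, the resolvent has a double real root $\tfrac{3\xi}{2}=\alpha^3$, so the only compatible choice is $u=v=\alpha$ and one obtains $t_3=2\alpha$ and the double root $(j+j^2)\alpha=-\alpha$, giving \eqref{eq:tb}. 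In the third case, $u^3$ and $v^3$ are real and positive, so one takes the real cube roots and the unique real root is $u+v$, the two remaining roots $ju+j^2 v$ and $j^2 u+j v$ being non-real complex conjugates. This is formula \eqref{eq:tc}.

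It remains to check that $t_3(\alpha,\xi)$ is always a positive simple real root when $(\alpha,\xi)\neq(0,0)$. In cases 2 and 3 this is immediate by direct inspection of \eqref{eq:tb} and \eqref{eq:tc} (noting that in case 3 the smaller of the two values $Z_\pm$ is still nonnegative because $3\xi\ge\sqrt{9\xi^2-4\alpha^6}$ as soon as $\alpha>0$, while if $\alpha=0$ one has simply $t_3=(3\xi)^{1/3}$). In case 1, the three real roots $t_1\le t_2\le t_3$ satisfy the Vieta relations
\[
   t_1+t_2+t_3=0,\qquad t_1 t_2+t_1 t_3+t_2 t_3=-3\alpha^2,\qquad t_1 t_2 t_3=3\xi\ge 0,
\]
from which a positive product combined with a zero sum forces exactly one positive and two non-positive roots, so $t_3>0$ and is simple as soon as $(\alpha,\xi)\neq(0,0)$.

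I expect the only delicate point is the bookkeeping of the cube-root branches in case 1, so as to guarantee both the compatibility condition $uv=\alpha^2$ and the correct labeling $t_1\le t_2\le t_3$. Everything else is a routine case split driven by the sign of the classical discriminant.
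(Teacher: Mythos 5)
The paper actually states Lemma \ref{racinesP} without proof --- it is treated as a classical consequence of Cardano's formula --- so there is no ``paper proof'' to compare against, and your write-up is supplying the expected routine argument. Your proof is structurally correct: the reduction to the depressed cubic $t^3-3\alpha^2 t-3\xi=0$, the resolvent $Z^2-3\xi Z+\alpha^6=0$, the case split on the sign of $9\xi^2-4\alpha^6$, and the identification of $u^3,v^3$ with the two quantities inside the cube roots in \eqref{eq:ta}--\eqref{eq:tc} are all right. The Vieta argument for positivity and simplicity of $t_3$ is also fine, although you should say ``nonnegative product'' rather than ``positive product'' so as to cover $\xi=0$, where one root is $0$, the other two are $\pm\sqrt{3}\alpha$, and the conclusion still holds.

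The one point that needs more care, and which you yourself flag as delicate, is the ordering in case $\xi<\tfrac23\alpha^3$. Writing $u=\alpha\re^{i\theta}$ with $\theta=\tfrac13\arg\bigl(\tfrac12(3\xi+i\sqrt{4\alpha^6-9\xi^2})\bigr)\in(0,\pi/6]$ and $v=\bar u$, the formula \eqref{eq:ta} with the principal cube root gives $t_3=2\alpha\cos\theta$, $t_2=2\alpha\cos(\theta+2\pi/3)$, $t_1=2\alpha\cos(\theta-2\pi/3)$. Since $\theta+2\pi/3>|\theta-2\pi/3|$ on this range, one actually finds $t_2<t_1\le 0<t_3$: the formula as written does \emph{not} place $t_1<t_2$. (A concrete check with $\alpha=1$, $\xi=0$ gives $t_1=0$, $t_2=-\sqrt3$, $t_3=\sqrt3$.) So the sentence ``reordering according to increasing real parts produces formula \eqref{eq:ta}'' is not literally correct; either the exponents $j^{3-k}$, $j^{k-3}$ should be read with $k$ permuting $1\leftrightarrow2$, or a non-principal branch of the cube root must be taken. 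This is a bookkeeping slip shared with the statement itself, and it is immaterial downstream since the rest of Section \ref{PSA} only uses $t_3$ (and, in the proof of \eqref{eq:Aflat}, the intermediate-value bounds on the two negative roots rather than formula \eqref{eq:ta}); but the proof should make the branch choice explicit rather than asserting that increasing real parts come out automatically.
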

The next result shows that the minimum cannot be reached on the set $\{\xi\ge\frac{2}{3}\alpha^3>0\}$. Its proof uses Corollary \ref{idFHmin} and the fact that, if $\xi\ge\frac{2}{3}\alpha^3>0$, the polynomial $t\mapsto(t-t_3)\sP_{\alpha,\xi}(t)$ is negative.
\begin{prop}
\label{prop:critic}
For all $(\alpha,\xi)$ such that $\xi\ge\frac{2}{3}\alpha^3>0$, we have
$$(\partial_{\alpha}
\varrho_{1})(\alpha,\xi)-2\alpha t_{3}(\alpha,\xi)(\partial_{\xi}
\varrho_{1})(\alpha,\xi)<0.$$
In particular, there is no critical point on the set $\{\xi\ge\frac{2}{3}\alpha^3>0\}$.
\end{prop}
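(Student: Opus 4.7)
My plan is to rewrite the quantity in question as a single integral by means of the Feynman--Hellmann identities of Corollary \ref{idFHmin}, and then to determine the sign of the integrand from the algebraic structure of the polynomial $\sP_{\alpha,\xi}$. Subtracting $2\alpha t_3(\alpha,\xi)$ times the second identity from the first one in Corollary \ref{idFHmin} gives
\[
   (\partial_{\alpha}\varrho_{1})(\alpha,\xi) - 2\alpha\, t_3(\alpha,\xi)\,(\partial_{\xi}\varrho_{1})(\alpha,\xi)
   = 4\alpha\int_{\R}\sP_{\alpha,\xi}(t)\bigl(t - t_3(\alpha,\xi)\bigr)\, u_{\alpha,\xi}^{2}(t)\,\rd t.
\]
Since $\alpha>0$ on the region considered and $u_{\alpha,\xi}$ is everywhere strictly positive (being the unique positive ground state), it will suffice to show that the polynomial $t\mapsto(t-t_3)\,\sP_{\alpha,\xi}(t)$ is nonpositive on $\R$ and not identically zero.

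For this sign property, the key ingredient is that $t_3$ is a real root of the cubic $\sP_{\alpha,\xi}$. Performing a polynomial division (and using the identity $t_3^{3}=3\alpha^{2}t_3+3\xi$ to match coefficients), one obtains the factorization
\[
   \sP_{\alpha,\xi}(t) = -\tfrac{1}{3}(t - t_3)\bigl(t^{2} + t_3\, t + t_3^{2} - 3\alpha^{2}\bigr),
\]
so that
\[
   (t - t_3)\,\sP_{\alpha,\xi}(t) = -\tfrac{1}{3}(t - t_3)^{2}\bigl(t^{2} + t_3\, t + t_3^{2} - 3\alpha^{2}\bigr).
\]
The sign is now controlled entirely by the quadratic factor, whose discriminant equals $3(4\alpha^{2}-t_3^{2})$. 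This discriminant is nonpositive exactly when $t_3\ge 2\alpha$; by Lemma \ref{racinesP}, together with a short implicit differentiation of $t_3^{3}-3\alpha^{2}t_3=3\xi$ (which shows that $t_3$ is strictly increasing in $\xi$ on the relevant range), this is precisely the regime $\xi\ge\frac{2}{3}\alpha^{3}$, with equality attained only at $\xi=\frac{2}{3}\alpha^{3}$ where $t_3=2\alpha$ and the quadratic reduces to $(t+\alpha)^{2}$.

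Putting everything together, the quadratic factor is nonnegative on $\R$, so $(t-t_3)\,\sP_{\alpha,\xi}(t)\le 0$ pointwise and vanishes only on a finite set (at $\{t_3\}$ if $\xi>\frac{2}{3}\alpha^{3}$, and at $\{-\alpha,2\alpha\}$ at the boundary). Since $u_{\alpha,\xi}^{2}>0$ everywhere on $\R$, the integral is strictly negative and, multiplied by $4\alpha>0$, we obtain the claimed strict inequality. The absence of critical points on $\{\xi\ge\frac{2}{3}\alpha^{3}>0\}$ is then immediate: at a critical point both partial derivatives would vanish and so would their linear combination, contradicting the strict negativity just established. I do not expect any real obstacle here; the only step deserving some care is verifying that the quadratic companion factor has no real root (or only a double one at the boundary), but this is clinched by the discriminant computation.
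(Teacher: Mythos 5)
Your proof is correct and follows exactly the route the paper sketches (the paper only records, without further detail, that the argument combines the Feynman--Hellmann formulas of Corollary~\ref{idFHmin} with the sign of the polynomial $t\mapsto(t-t_3(\alpha,\xi))\,\sP_{\alpha,\xi}(t)$ on the region $\xi\ge\tfrac23\alpha^3>0$). You have filled in the intended details -- the factorization through the real root $t_3$, the discriminant computation, and the monotonicity of $t_3$ in $\xi$ giving $t_3\ge2\alpha$ -- all correctly.
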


\subsubsection{Behavior of the band function at infinity}
Now, the remaining part of this section is devoted to prove that $\varrho_1(\alpha,\xi)$ tends to infinity as $|\alpha|+|\xi|$ tends to infinity, namely
\begin{equation}
\label{eq:varrhoinfty}
   \lim_{|\alpha|+|\xi|\to\infty} \varrho_1(\alpha,\xi) = \infty\,.
\end{equation}
Note that Proposition \ref{prop:critic} combined with \eqref{eq:varrhoinfty} implies Theorem \ref{thmprincipalsymboleIntro}.

To prove \eqref{eq:varrhoinfty}, we split (for each $R>1$) the region
\[
   \sA_R := \{(\alpha,\xi)\in\R^2,\quad \alpha\ge0,\ \ \xi\ge0,\ \ \mbox{and}\ \ \alpha+\xi>R\}
\]
into the three subregions
\begin{subequations}
\label{eq:subA}
\begin{align}
   \sA^\circ_R &:= \sA_R \cap \{(\alpha,\xi)\in\R^2,\quad \alpha\in[0,1] \quad \xi>\tfrac{2}{3}\alpha^3\}\\
   \sA^\sharp_R &:= \sA_R \cap \{(\alpha,\xi)\in\R^2,\quad \alpha\in[1,\infty)\quad \xi>\tfrac{2}{3}\alpha^3\}\\
   \sA^\flat_R &:= \sA_R \cap \{(\alpha,\xi)\in\R^2,\quad \alpha\in[1,\infty)\quad \xi\le\tfrac{2}{3}\alpha^3\}
\end{align}
\end{subequations}
and are going to prove the next lemma.

\begin{lem}
\label{lem:A}
We denote $\sQ_{\alpha,\xi}$ the quadratic form associated with the operator $\sX_{\alpha,\xi}$.
There exists constants $R>1$ and $B>0$ such that for all $\psi\in\mathcal{C}^\infty_0(\R)$ the following lower bounds hold
\begin{subequations}
\label{eq:A}
\begin{align}
\label{eq:Acirc}
   \sQ_{\alpha,\xi}(\psi) &\ge B\,\xi^{2/3}\|\psi\|_{L^2(\R)}^2,\quad \forall(\alpha,\xi)\in\sA^\circ_R \\
\label{eq:Asharp}
   \sQ_{\alpha,\xi}(\psi) &\ge B\,\xi^{2/9}\|\psi\|_{L^2(\R)}^2,\quad \forall(\alpha,\xi)\in\sA^\sharp_R \\
\label{eq:Aflat}
   \sQ_{\alpha,\xi}(\psi) &\ge B\,\alpha^{2/3}\|\psi\|_{L^2(\R)}^2,\quad \forall(\alpha,\xi)\in\sA^\flat_R
\end{align}
\end{subequations}
\end{lem}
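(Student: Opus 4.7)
The proof in all three subregions will follow the same semi-classical template: identify the real zeros $t_k$ of $\sP_{\alpha,\xi}$, which are the wells of the potential $\sV_{\alpha,\xi}$ (see Lemma \ref{racinesP}); localize $\sX_{\alpha,\xi}$ around them with an IMS partition of unity; and bound each localized piece either by a pointwise estimate on $\sV_{\alpha,\xi}$ (far from the wells) or by comparison with a harmonic or quartic oscillator model (near a well). The natural scales are the local frequencies $\omega_k := |\sP'_{\alpha,\xi}(t_k)| = |t_k^2 - \alpha^2|$, which govern the harmonic approximation at each simple zero.

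For region $\sA^\circ_R$ (bounded $\alpha$, large $\xi$), formula \eqref{eq:tc} gives $t_3 = (3\xi)^{1/3}(1+o(1))$ as $\xi\to\infty$, so $\omega_3 \ge c\xi^{2/3}$. I take a single cutoff $\chi_1$ of width $L := \xi^{-1/3+\delta}$ centered at $t_3$ (with a small $\delta>0$), and its complement $\chi_2 := \sqrt{1-\chi_1^2}$. On $\supp \chi_1$, a Taylor expansion at $t_3$ gives $\sV_{\alpha,\xi}(t) \ge (1-o(1))\,\omega_3^2(t-t_3)^2$, so a comparison with the harmonic oscillator on $\R$ yields $\sQ_{\alpha,\xi}(\chi_1\psi) \ge (1-o(1))\,\omega_3\,\|\chi_1\psi\|^2$. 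On $\supp\chi_2$, the same expansion combined with the cubic growth of $|\sP_{\alpha,\xi}|$ at infinity gives $\sV_{\alpha,\xi} \ge c\,\xi^{2/3+2\delta}$. The IMS error $O(L^{-2}) = O(\xi^{2/3-2\delta})$ is negligible, and \eqref{eq:Acirc} follows.

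For the two regions with $\alpha \ge 1$, the key is the rescaling $t = -\alpha + \alpha^{-1/3}\tau$ around the degenerate critical point $t = -\alpha$ of $\sP_{\alpha,\xi}$ (the other critical point $t = +\alpha$ is harmless because $\sP_{\alpha,\xi}(\alpha) = \xi + \frac{2\alpha^3}{3}$ is never small). A direct computation yields
\begin{equation*}
   \sP_{\alpha,\xi}(t) = \alpha^{1/3}\Big(\lambda + \tau^2 - \tfrac{\alpha^{-4/3}}{3}\tau^3\Big), \qquad \lambda := \alpha^{-1/3}\Big(\xi - \tfrac{2\alpha^3}{3}\Big),
\end{equation*}
so that $\sX_{\alpha,\xi}$ differs from $\alpha^{2/3}\,\widetilde{\sX}_\lambda$ by a cubic perturbation that is small on bounded ranges of $\tau$, where $\widetilde{\sX}_\lambda := -\partial_\tau^2 + (\lambda+\tau^2)^2$ is a one-parameter family of Sturm--Liouville operators on $L^2(\R)$. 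The crucial input is the uniform estimate $\inf\Spec(\widetilde{\sX}_\lambda) \ge c_q > 0$ for every $\lambda \in \R$: for $\lambda \ge 0$ this follows from $(\lambda+\tau^2)^2 \ge \tau^4$ and the positivity of the quartic-oscillator ground state (and one even gets $\inf\Spec(\widetilde{\sX}_\lambda) \ge \lambda^2$); for $\lambda \le 0$, $\inf\Spec(\widetilde{\sX}_\lambda) > 0$ for each $\lambda$ (a zero-energy eigenfunction would satisfy $\tilde\psi'' \ge 0$, hence be convex and thus not in $L^2(\R)$), continuity in $\lambda$ handles any compact range, and the double-well harmonic bound $\inf\Spec(\widetilde{\sX}_\lambda) \ge c\sqrt{-\lambda}$ handles $-\lambda$ large. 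Localizing via an IMS cutoff of $\tau$-width $O(1)$ around $t = -\alpha$ and undoing the rescaling gives $\sQ_{\alpha,\xi}(\chi\psi) \ge c\,\alpha^{2/3}\|\chi\psi\|^2$.

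Combining this with a harmonic estimate at the simple well $t_3 \in [\alpha\sqrt{3}, 2\alpha]$ (of frequency $\ge 2\alpha^2$) and a pointwise estimate $\sV_{\alpha,\xi} \ge c\alpha^4$ on the intermediate region proves \eqref{eq:Aflat}. The same $\alpha^{2/3}$-bound already implies \eqref{eq:Asharp} in the sub-regime $\xi \le \alpha^3$ of $\sA^\sharp_R$ (since then $\xi^{2/9} \le \alpha^{2/3}$); in the remaining sub-regime $\xi \ge \alpha^3$ the argument of $\sA^\circ_R$ transposes verbatim (its bounded-$\alpha$ hypothesis was used only to guarantee $t_3 \gg \alpha$, which persists as soon as $\xi \gg \alpha^3$) and yields $\sQ_{\alpha,\xi} \ge c\xi^{2/3} \gg c\xi^{2/9}$. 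The main obstacle is the uniform spectral control of $\widetilde{\sX}_\lambda$---which must handle on equal footing the quartic regime $\lambda \to 0$, the double-well regime $\lambda \to -\infty$, and the harmonic regime $\lambda \to +\infty$---together with the verification that the cubic perturbation is genuinely negligible on the support of the IMS cutoff. The rescaling $t = -\alpha + \alpha^{-1/3}\tau$ is the one that places the quadratic and quartic contributions to $\sP_{\alpha,\xi}$ on equal order and thereby produces the correct $\alpha^{2/3}$ scale.
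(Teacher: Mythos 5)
Your argument for region $\sA^\circ_R$, and for the sub-regime $\xi\gtrsim\alpha^3$ of $\sA^\sharp_R$, is in substance the same as the paper's: bound the quadratic factor $\sN_{\alpha,\xi}$ of $\sP_{\alpha,\xi}=-\sN_{\alpha,\xi}(t)(t-t_3)$ from below by a constant of order $\xi^{2/3}$ and compare the result with a harmonic oscillator centred at $t_3$. (Your IMS cutoff around $t_3$ is actually unnecessary there, since the bound on $\sN_{\alpha,\xi}$ holds for all $t$.) Deriving the $\sA^\sharp_R$ bound from $\sA^\flat_R$ when $\xi\lesssim\alpha^3$ is a small organizational variant; the paper instead treats $\sA^\sharp_R$ once with a single cut at $\alpha_-=0,\alpha_+=\alpha$.

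For $\sA^\flat_R$, however, your route is genuinely different from the paper's and, as sketched, has a real gap. The paper keeps the \emph{exact} factorization $\sV_{\alpha,\xi}=\sN_{\alpha,\xi}(t)^2(t-t_3)^2$, splits $\R$ into only two pieces with a cutoff placed between $\frac12\alpha$ and $\alpha$, and on the piece containing $t_1,t_2$ freezes the linear factor ($|t-t_3|^2\ge B_1\gtrsim\alpha^2$) and reduces $D_t^2+B_1\sN_{\alpha,\xi}(t)^2$, by an affine change of variable, to $(\tfrac49)^{1/3}B_1^{1/3}\bigl(D_\tau^2+(\tfrac{\tau^2}{2}-\eta)^2\bigr)$; the uniform positivity of the Montgomery family is the single key input. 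Your approach Taylor-expands $\sP_{\alpha,\xi}$ at $t=-\alpha$ and drops the cubic. This is where the argument breaks: the rescaled parameter $\lambda=\alpha^{-1/3}(\xi-\tfrac23\alpha^3)$ ranges over $[-\tfrac23\alpha^{8/3},0]$ in $\sA^\flat_R$, and at the extreme end (e.g.\ $\xi=0$, where the roots are $t_1=-\sqrt3\alpha$, $t_2=0$, $t_3=\sqrt3\alpha$) the cubic term $-\tfrac13\alpha^{-4/3}\tau^3$ is of the \emph{same} order as $\lambda+\tau^2$ near the wells $\tau\approx\pm\sqrt{-\lambda}\sim\alpha^{4/3}$, so it is not a perturbation there. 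Moreover, your IMS cutoff of $\tau$-width $O(1)$ around $t=-\alpha$ (i.e.\ $t$-width $O(\alpha^{-1/3})$) then misses $t_1$ and $t_2$, so the stated bound $\sV_{\alpha,\xi}\ge c\alpha^4$ on the ``intermediate region'' is simply false --- $\sV_{\alpha,\xi}$ vanishes at $t_1,t_2$ inside that region. You flag the cubic term as ``the main obstacle'', but do not resolve it; closing the gap requires either a further splitting of $\sA^\flat_R$ according to the size of $\lambda$ (with an adaptive cutoff width and a separate harmonic treatment of the well-separated regime) or, more efficiently, dispensing with Taylor expansion and using the exact factorization as the paper does. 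Your elementary derivation of the uniform positivity of $\lambda\mapsto\inf\Spec\bigl(D_\tau^2+(\lambda+\tau^2)^2\bigr)$ (quartic comparison for $\lambda\ge0$, convexity argument plus continuity plus the harmonic double-well bound for $\lambda\le0$) is correct and a nice self-contained substitute for the citation to \cite{H10}, but it does not by itself repair the $\sA^\flat_R$ estimate.
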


We can see that 
\[
   \xi^{2/9}> \tfrac12\xi^{2/9}+\tfrac13 \alpha^{2/3} \ \ \mbox{on}\ \ \sA^\circ_R\cup\sA^\sharp_R
   \quad\mbox{and}\quad
   \alpha^{2/3}> \tfrac12(\alpha^{2/3}+ \xi^{2/9}) \ \ \mbox{on}\ \ \sA^\flat_R\,.
\]
Therefore the bounds \eqref{eq:A} imply \eqref{eq:varrhoinfty}.

\begin{proof}[Proof of \eqref{eq:Acirc}]
We recall that, on the set $\{\xi>\frac{2}{3}\alpha^3\ge 0\}$, the polynomial $\sP_{\alpha,\xi}$ admits a unique real root, denoted $t_{3}(\alpha,\xi)$. Using that $\sP_{\alpha,\xi}(t_3(\alpha,\xi))=0$, we immediately check that we have the  factorization
\begin{equation}\label{fact}
   \sP_{\alpha,\xi}(t) =
   -\sN_{\alpha,\xi}(t)\big(t-t_3(\alpha,\xi)\big)
   \quad\mbox{with}\quad
   \sN_{\alpha,\xi}(t)=\frac{1}{3}t^2+\frac{t_3(\alpha,\xi)}{3}t+\frac{\xi}{t_3(\alpha,\xi)}\,.
\end{equation}
The factor $\sN_{\alpha,\xi}(t)$ is positive for all $t$ when $\xi>\frac{2}{3}\alpha^3$ and
we have $\partial_t\sN_{\alpha,\xi}(t)=0$ for $t=-\frac{t_3(\alpha,\xi)}{2}$. Therefore, for all $\xi> \frac{2}{3}\alpha^3$, we have the lower bound
\begin{equation*}
   \sN_{\alpha,\xi}(t)\ge -\frac{t_3(\alpha,\xi)^2}{12}+\frac{\xi}{t_3(\alpha,\xi)}.
\end{equation*}
Then, the quotient $\sQ_{\alpha,\xi}(\psi) / \|\psi\|_{L^2(\R)}^2$ is bounded from below by the ground state energy of the operator
$$
   D_t^2+\left(-\frac{t_3(\alpha,\xi)^2}{12}+\frac{\xi}{t_3(\alpha,\xi)}\right)^2(t-t_3(\alpha,\xi))^2
   \quad\mbox{on}\quad\R.
$$
By considering the expression of $t_3(\alpha,\xi)$ given in \eqref{eq:tc}, we get, uniformly in $\alpha\in [0,1]$,
\begin{equation*}
   t_3(\alpha,\xi) = (3\xi)^{1/3} + \underset{\xi\rightarrow+\infty}{\mathcal{O}(\xi^{-1/3})}.
\end{equation*}
Hence, there exist constants $B>0$ and $K>0$ such that for all $\xi>K$ (with $\xi>\frac{2}{3}\alpha^3$ and $0\le \alpha\le 1$)
\begin{equation*}
   \left(-\frac{t_3(\alpha,\xi)^2}{12}+\frac{\xi}{t_3(\alpha,\xi)}\right)\ge B\,\xi^{2/3}.
\end{equation*}
For all $\xi>K$, $\frac{\sQ_{\alpha,\xi}(\psi)}{\|\psi\|_{L^2(\R)}^2}$ is bounded from below by the ground state energy of
$$
   D_t^2+B^2\xi^{4/3}(t-t_3(\alpha,\xi))^2 \quad\mbox{on}\quad\R.
$$
By translation and homogeneity, we get (using the harmonic oscillator):
\begin{equation}\label{eq:in2bis}
\sQ_{\alpha,\xi}(\psi)\ge B\,\xi^{2/3}\|\psi\|_{L^2(\R)}^2.
\end{equation}
This concludes the proof of the estimate \eqref{eq:Acirc}.
\end{proof}

\begin{proof}[Preliminaries for the proof of \eqref{eq:Asharp} and \eqref{eq:Aflat}]
For the proof of estimates \eqref{eq:Asharp} and \eqref{eq:Aflat}, we use a quadratic partition of unity $(\chi_1,\chi_2)$ on $\R$ in order to isolate the root $t_3(\alpha,\xi)$ from the other two roots of $\sP_{\alpha,\xi}$. For this we choose two real numbers $\alpha_{-}$ and $\alpha_{+}$ such that
\[
   \alpha_{-}=\gamma_{-}\alpha,\quad \alpha_{+}=\gamma_{+}\alpha\quad\mbox{with}\quad
   0\le\gamma_{-}<\gamma_{+}
\]
and we take the two functions $\chi_1$ and $\chi_2$ such that $\chi_{1}^2+\chi_{2}^2=1$ on $\R$ and
\begin{equation*}
\begin{split}
   \chi_1(t) =\left\{
   \begin{array}{ccc} 1 & \text{on} & (-\infty,\alpha_{-}]
   \\ 0 & \text{on} & [\alpha_{+},+\infty)
   \end{array}\right. \quad 
   \text{and} \quad 
   \chi_2(t) =\left\{
   \begin{array}{ccc} 0 & \text{on} & (-\infty,\alpha_{-}]
   \\ 1 & \text{on} & [\alpha_{+},+\infty)
   \end{array}\right.
\end{split}
\end{equation*}
with the control of their derivatives
\begin{equation*}
   \sup_{t\in\R}|\chi_{j}'(t)|\le \frac{\Cims}{\alpha_{+}-\alpha_{-}}, \quad \text{for all} \ j\in \{1,2\}.
\end{equation*}
The localization formula (see \eqref{formule de localisation}) gives, for all function $\psi$ in the form domain,
\begin{equation}\label{IMSmin}
   \sQ_{\alpha,\xi}(\psi)=
   \sQ_{\alpha,\xi}(\chi_1\psi)+\sQ_{\alpha,\xi}(\chi_2\psi)
   -\| \psi\chi_1'\|^2_{L^2(\R)}-\| \psi\chi_2'\|^2_{L^2(\R)}.
\end{equation}
Whence
\begin{equation}\label{Restemin}
   \sQ_{\alpha,\xi}(\psi) \ge
   \sQ_{\alpha,\xi}(\chi_1\psi)+\sQ_{\alpha,\xi}(\chi_2\psi)
   - \frac{\Cims^2}{(\alpha_{+}-\alpha_{-})^2}\, \| \psi\|^2_{L^2(\R)}\,.
\end{equation}
We let $\Omega^{(1)}=(-\infty,\alpha_{+})$ and $\Omega^{(2)}=(\alpha_{-},+\infty)$ and denote $\psi_1=\chi_1\psi$ and $\psi_2=\chi_2\psi$. We will work out a lower bound of the quadratic form on each of these subdomains. 

\smallskip
On both subdomains, using the factorization \eqref{fact}, we start from the expression of $\sX_{\alpha,\xi}$ as
\begin{equation}
\label{eq:sX}
   \sX_{\alpha,\xi} = D_{t}^2 + \sV_{\alpha,\xi} \quad\mbox{with}\quad
   \sV_{\alpha,\xi} = \big(\sN_{\alpha,\xi}(t)\big)^2\, \big(t-t_3(\alpha,\xi)\big)^2.
\end{equation}
On $\Omega^{(1)}$, we bound from below the potential $\sV_{\alpha,\xi}$ as
\begin{equation}
\label{eq:B1}
   \sV_{\alpha,\xi} \ge  B_1(\alpha,\xi)\,\big(\sN_{\alpha,\xi}(t)\big)^2
   \quad\mbox{with}\quad
   B_1(\alpha,\xi) = \min_{t\le\alpha_{+}} \big(t-t_3(\alpha,\xi)\big)^2.
\end{equation}
Therefore, the quotient $\sQ_{\alpha,\xi}(\psi_1) / \|\psi_1\|^2_{L^2(\R)}$ is bounded from below by the ground state energy of
\begin{equation}
\label{eq:Op1}
   D_{t}^2 + B_1(\alpha,\xi)\,\big(\sN_{\alpha,\xi}(t)\big)^2 \quad\mbox{on}\quad \R.
\end{equation}
Using the canonical form of the factor $\sN_{\alpha,\xi}$
\begin{equation}
\label{eq:can}
   \sN_{\alpha,\xi}(t)=\frac{1}{3}\Big(t+t_3(\alpha,\xi)\Big)^2
   -\frac{t_3(\alpha,\xi)^2}{12}+\frac{\xi}{t_3(\alpha,\xi)}\,.
\end{equation}
we find by translation and scaling that the operator \eqref{eq:Op1} is isospectral to the operator
\begin{equation}
\label{eq:Op1b}
   (\tfrac{4}{9})^{1/3}B_1(\alpha,\xi)^{1/3} \left(D_{\tau}^2 + \Big(\frac{\tau^2}{2} -\eta\Big)^2\right)^2 \quad\mbox{on}\quad \R,
\end{equation}
for a suitable real number $\eta$. We know from \cite{H10} that the ground state energy $\gamma(\eta)$ of the operator $D_{\tau}^2 + \big(\frac{\tau^2}{2} -\eta\big)^2$ as a function of $\eta\in\R$ reaches its minimum (for a positive value $\eta_0$ of $\eta$). Therefore this minimum is positive. We denote it by $\mathrm{M}_0$. Finally we bound $\sQ_{\alpha,\xi}(\psi_1)$ from below as follows
\begin{equation}
\label{eq:psi1}
   \sQ_{\alpha,\xi}(\psi_1) \ge (\tfrac{4}{9})^{1/3}B_1(\alpha,\xi)^{1/3}\mathrm{M}_0
   \|\psi_1\|^2_{L^2(\R)}.
\end{equation}

\smallskip
On $\Omega^{(2)}$, we swap the roles of the two factors in $\sP_{\alpha,\xi}$ and obtain the lower bound:
\begin{equation}
\label{eq:B2}
   \sV_{\alpha,\xi} \ge  B_2(\alpha,\xi)\,\big(t-t_3(\alpha,\xi)\big)^2
   \quad\mbox{with}\quad
   B_2(\alpha,\xi) = \min_{t\ge\alpha_{-}} \big(\sN_{\alpha,\xi}(t)\big)^2.
\end{equation}
By translation and homogeneity, we get via the harmonic oscillator:
\begin{equation}
\label{eq:psi2}
   \sQ_{\alpha,\xi}(\psi_2) \ge B_2(\alpha,\xi)^{1/2}
   \|\psi_2\|^2_{L^2(\R)}\,.
\end{equation}

\smallskip
Finally, combining \eqref{eq:psi1} and \eqref{eq:psi2} with \eqref{Restemin} we find
\begin{equation}
\label{eq:psi}
   \sQ_{\alpha,\xi}(\psi) \ge \left(\min\Big\{(\tfrac{4}{9})^{1/3}B_1(\alpha,\xi)^{1/3}\mathrm{M}_0,\:
   B_2(\alpha,\xi)^{1/2}\Big\} - \frac{\Cims^2}{(\alpha_{+}-\alpha_{-})^2}\right) \|\psi\|^2_{L^2(\R)}\,.
\end{equation}
It remains to choose $\alpha_{+}$ and $\alpha_{-}$ so that we can find suitable lower bounds for the constants $B_1(\alpha,\xi)$ and $B_2(\alpha,\xi)$. This will be done finding upper and lower bounds for $t_3(\alpha,\xi)$.
\end{proof}

\begin{proof}[Proof of \eqref{eq:Asharp}]
In the region $\sA^\sharp$, we have $\xi>\frac{2}{3}\alpha^3$ (and $\alpha>1$). With the aim of finding bounds for $t_3(\alpha,\xi)$ we calculate the derivative of expression \eqref{eq:tc} with respect to $\alpha$:
\begin{multline*}
    \partial_\alpha t_{3}(\alpha,\xi) = -\frac{4\alpha^5}{\sqrt{-4\alpha^6+9\xi^2}}
    \left(\frac{1}{2}\left(3\xi+\sqrt{-4\alpha^6+9\xi^2}\right)\right)^{-2/3} \\
   +\frac{4\alpha^5}{\sqrt{-4\alpha^6+9\xi^2}}
    \left(\frac{1}{2}\left(3\xi-\sqrt{-4\alpha^6+9\xi^2}\right)\right)^{-2/3} .
\end{multline*}
We can see that the modulus of the second term is larger than the modulus of the first term. Hence the positivity of the derivative $\partial_\alpha t_{3}(\alpha,\xi)$. Therefore
\[
   t_{3}(0,\xi) \le t_{3}(\alpha,\xi) \le t_{3}\big((\tfrac32\xi)^{1/3},\xi\big),\quad\forall\xi>0.
\]
With \eqref{eq:tc} (and using again that $\xi>\frac{2}{3}\alpha^3$), 
we deduce
\[
   2^{1/3} \alpha <
   (3\xi)^{1/3} \le t_{3}(\alpha,\xi) \le 2^{2/3} (3\xi)^{1/3}.
\]
We choose 
\[
   \alpha_{-} = 0\quad\mbox{and}\quad \alpha_{+} = \alpha.
\]
Thus, we get for the constants $B_1(\alpha,\xi)$ and $B_2(\alpha,\xi)$ appearing in \eqref{eq:B1} and \eqref{eq:B2}:
\[
   B_1(\alpha,\xi) = \min_{t\le\alpha_{+}} \big(t-t_3(\alpha,\xi)\big)^2 \ge 
   (\alpha_{+}-(3\xi)^{1/3}\big)^2 \ge (3\xi)^{2/3} (1-2^{-1/3})^2
\]
and
\[
\begin{aligned}
   B_2(\alpha,\xi) &= \min_{t\ge\alpha_{-}} \big(\sN_{\alpha,\xi}(t)\big)^2 \\ &=
   \min_{t\ge\alpha_{-}} \left(\frac{1}{3}t^2+\frac{t_3(\alpha,\xi)}{3}t+\frac{\xi}{t_3(\alpha,\xi)}\right)^2 
   \ge \left(\frac{\xi}{t_3(\alpha,\xi)}\right)^2 
   \ge 2^{-4/3}3^{-2/3} \xi^{4/3}.
\end{aligned}
\]
Then \eqref{eq:psi} yields
\[
   \sQ_{\alpha,\xi}(\psi) \ge \left(\min\big\{C_1\xi^{2/9},\:
   C_2\xi^{2/3}\big\} - \Cims^2 \alpha^{-2}\right) \|\psi\|^2_{L^2(\R)}\,.
\]
Since $\alpha\ge1$, this clearly implies \eqref{eq:Asharp} if $\xi$ is large enough.
\end{proof}

\begin{proof}[ Proof of \eqref{eq:Aflat}]
In the region $\sA^\flat$, $\frac23\alpha^3\ge\xi$ and the polynomial $\sP_{\alpha,\xi}$ has three real roots. We note that $\sP_{\alpha,\xi}(t)\to-\infty$ as $t\to+\infty$ and $\sP_{\alpha,\xi}(t)\to+\infty$ as $t\to-\infty$. We check that
\begin{multline*}
   \sP_{\alpha,\xi}(-\sqrt3\alpha) = \xi  > 0,\qquad
   \sP_{\alpha,\xi}(-\alpha) = \xi - \tfrac23\alpha^3 \le 0,\\
   \sP_{\alpha,\xi}(\sqrt3\alpha) = \xi  > 0,\qquad
   \sP_{\alpha,\xi}(2\alpha) = \xi - \tfrac23\alpha^3 \le 0.
\end{multline*}
This implies that
\[
   -\sqrt3\alpha < t_1(\alpha,\xi) \le -\alpha \le t_2(\alpha,\xi) < \sqrt3\alpha < t_3(\alpha,\xi) \le 2\alpha.
\]
Now we choose 
\[
   \alpha_{-} = \tfrac12\alpha \quad\mbox{and}\quad \alpha_{+} = \alpha,
\]
and we get for the constants $B_1(\alpha,\xi)$ and $B_2(\alpha,\xi)$:
\[
   B_1(\alpha,\xi) = \min_{t\le\alpha_{+}} \big(t-t_3(\alpha,\xi)\big)^2 \ge 
   (\alpha_{+}-\sqrt3\alpha\big)^2 = \alpha^2 (1-\sqrt3)^2
\]
and
\[
   B_2(\alpha,\xi) = 
   \min_{t\ge\alpha_{-}} \left(\frac{1}{3}t^2+\frac{t_3(\alpha,\xi)}{3}t+\frac{\xi}{t_3(\alpha,\xi)}\right)^2 
   \ge \left(\frac{\alpha^2}{12} \right)^2 = \frac{1}{144} \alpha^4.
\]
Then \eqref{eq:psi} yields
\[
   \sQ_{\alpha,\xi}(\psi) \ge \left(\min\big\{C_1\alpha^{2/3},\:
   C_2\alpha^{2}\big\} - \Cims^2 \alpha^{-2}\right) \|\psi\|^2_{L^2(\R)}\,,
\]
which implies \eqref{eq:Aflat}.
\end{proof}

The proof of Lemma \ref{lem:A} is now achieved, hence Theorem \ref{thmprincipalsymboleIntro} is proved.

\subsubsection{Numerical simulations}
We have computed an approximation of the band function $\varrho_1$ on a grid $\Sigma$ of values of $(\alpha,\xi)$ covering the square $(-2,2)^2$. The grid points of $\Sigma$ are $(\alpha_k, \xi_l)$ with $\alpha_k=-2+{k}/{100}$ and  $\xi_l=-2+{l}/{100}$, for $k,l \in \{0,\ldots, 400\}$. In Figure \ref{F1}, we plot the level lines of $\varrho_1$ above the grid $\Sigma$ and in Figure \ref{F2}, we plot the same band function restricted on the axis $\xi=0$.

\begin{figure}[ht]
\begin{center}
    \includegraphics[width=0.9\textwidth]{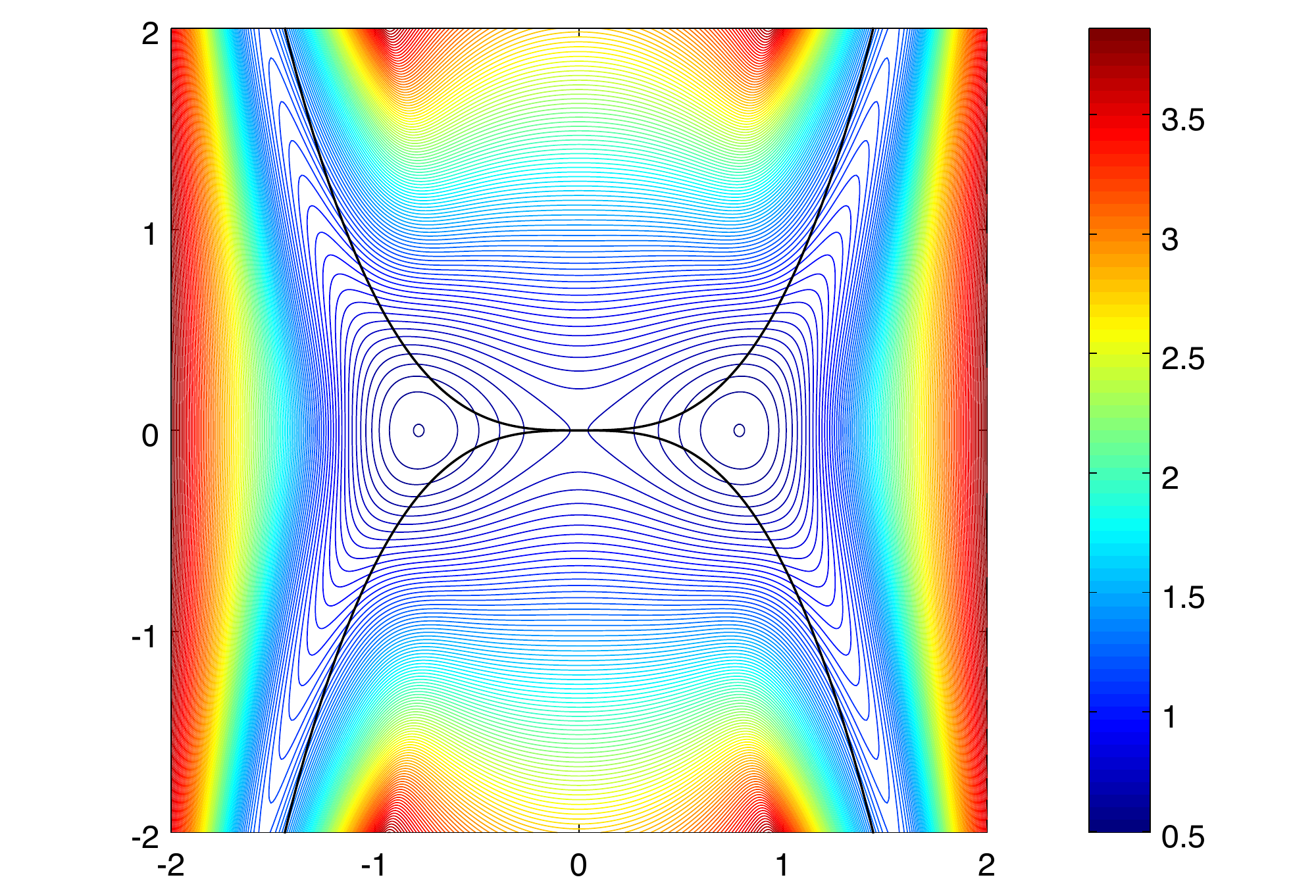}
    
    \includegraphics[width=0.9\textwidth]{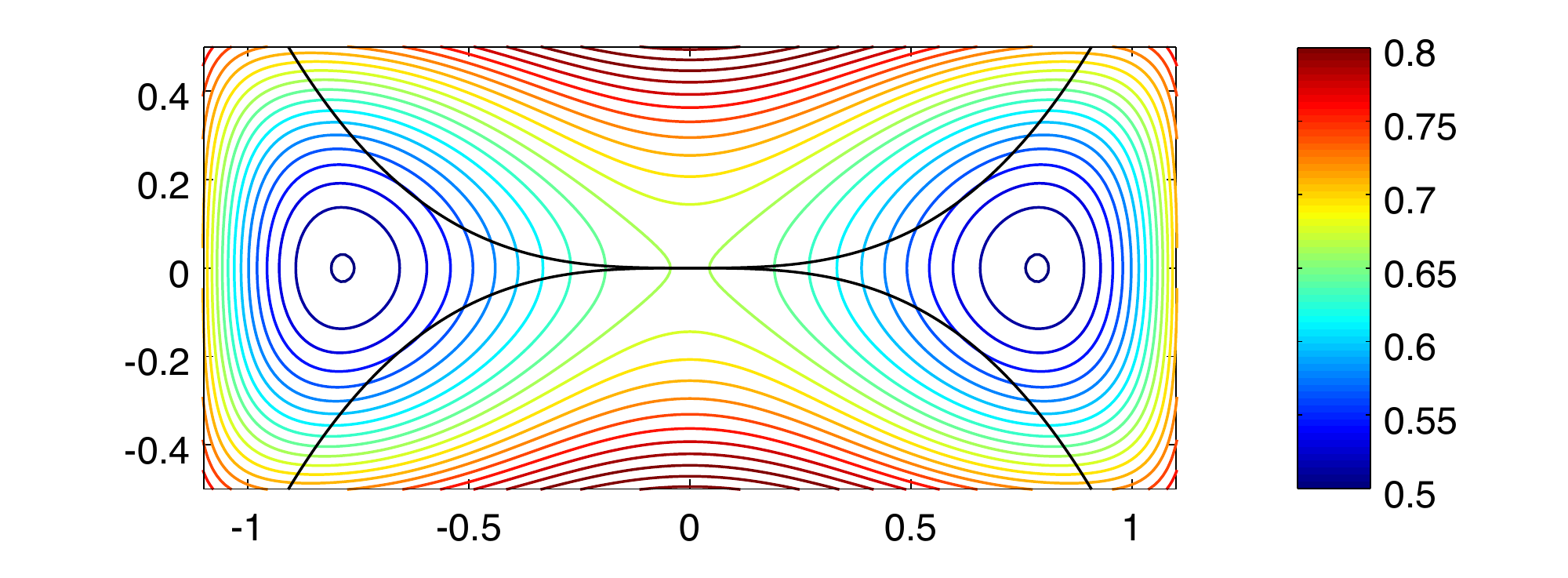}
    \end{center}
    \caption{Contour lines for the numerical values of $\varrho_1(\alpha, \xi)$ on the grid $\Sigma$, with a zoom in a neighborhood of the two points where the minimum is attained.
The curves $\{\alpha\mapsto\xi=\pm\frac{2}{3}\alpha^3\}$ are also represented.
    }
    \label{F1}
\end{figure}

\begin{figure}[ht]
\begin{center}
    \includegraphics[width=0.7\textwidth]{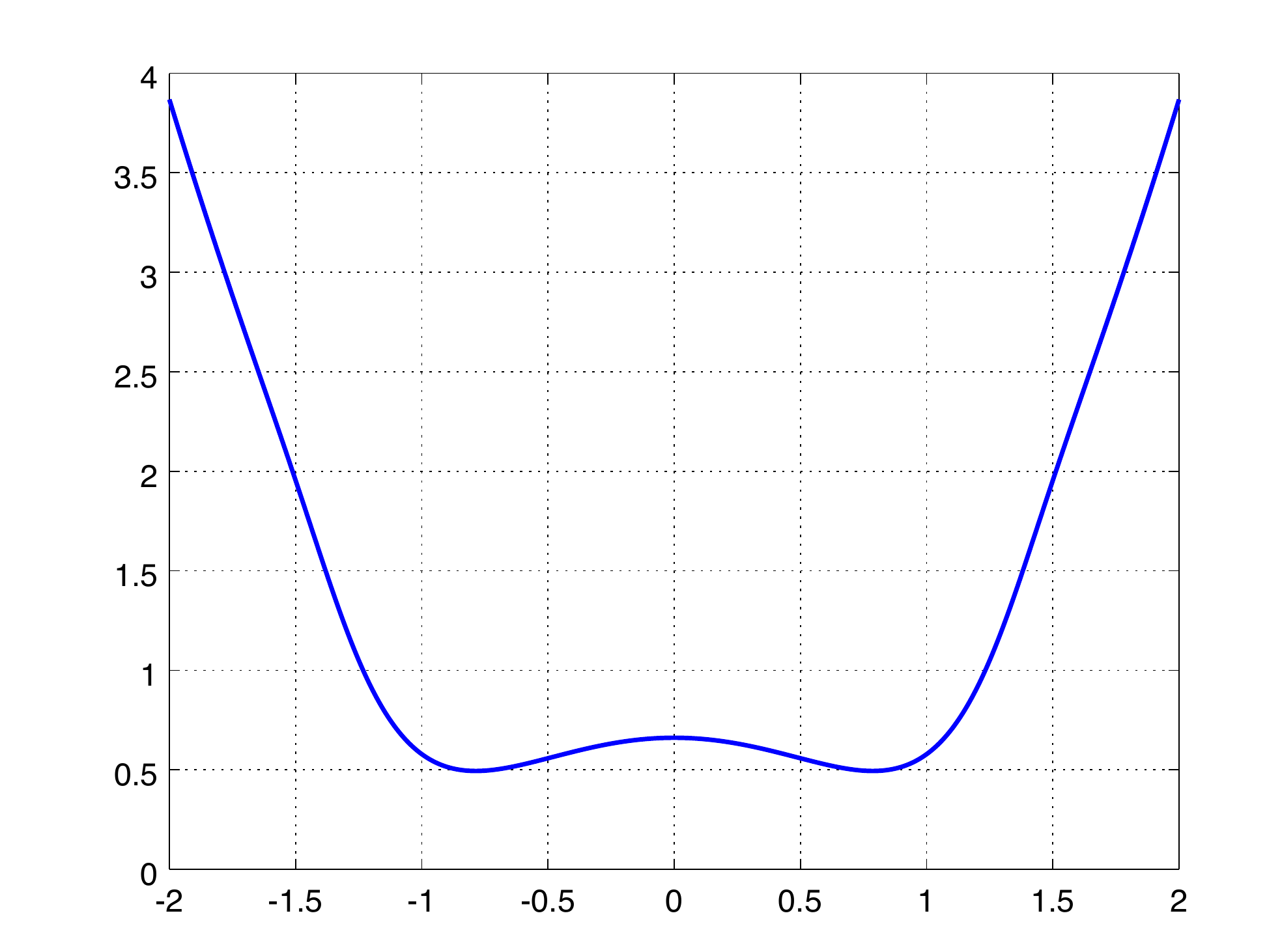}
    \end{center}
    \caption{Numerical values for $\varrho_1(\alpha, \xi)$ on the axis $\xi=0$.
    }
    \label{F2}
\end{figure}

The computations for Figures \ref{F1} and \ref{F2} are performed by the finite element method\footnote{All our computations are preformed with the FEM library \href{https://uma.ensta-paristech.fr/soft/XLiFE++/}{XLiFE++} under a \href{https://www.gnu.org/licenses/gpl-3.0.en.html}{GNU GPL licence}.} and are based on a Galerkin projection on the interval $(-5,5)\ni t$ with natural boundary conditions at the ends, discretized by 10 elements with polynomial degree 10. With this number of elements the degree $10$ saturates the double precision, see Table \ref{T1}. Enlarging the domain yields numbers with the same first $13$ digits, which proves that $(-5,5)$ is large enough to capture the numerical support of the first eigenvectors when $(\alpha,\xi)$ belongs to $[-2,2]^2$.

\begin{table}[b]
\small
\begin{tabular}{|c|c|c|c|}
\hline
$Q$ & $\varrho_1(0,0)$ & $\alpha_0$ & $\varrho_1(\alpha_0,0)$
$\vphantom{L^{\displaystyle 2}_{\displaystyle 2_2}}$\\
\hline
1 & 0.716813090776313 &   0.794 &  0.549407920248045\\
2 & 0.665333352584016 &   0.790 &  0.495300498319300\\
3 & 0.660969098915948 &   0.786 &  0.494298816339735\\
4 & 0.660960180256631 &   0.786 &  0.494116056132206\\
5 & 0.660952197968529 &   0.786 &  0.494109730708665\\
6 & 0.660952010967773 &   0.786 &  0.494109338690037\\
7 & 0.660952005398424 &   0.786 &  0.494109316007370\\
8 & 0.660952004871061 &   0.786 &  0.494109315475798\\
9 & 0.660952004869326 &   0.786 &  0.494109315436505\\
10 &   0.660952004868639 &   0.786 &  0.494109315435604\\
11 &   0.660952004868671 &   0.786 &  0.494109315435619\\
12 &   0.660952004868692 &   0.786 &  0.494109315435650\\
\hline
\end{tabular}
\medskip

\caption{Computed values of $\varrho_1(\alpha_0,0)$ and $\varrho_1(\alpha_0,0)$ with polynomial degree $Q$ on $10$ elements in $(-5,5)$. The numerical value of $\alpha_0$ is also provided.}
\label{T1}
\end{table}

According to the analysis of \cite{HP10}, the bottom of the spectrum $\inf \Spec(\mathcal{M}^{[2]})$ of the Montgomery operator of order two coincides with $\varrho_1(0,0)$. We can see in Figure \ref{F2}, that $\alpha=0$ is a local \emph{maximum} of the function $\alpha\mapsto\varrho_1(\alpha,0)$. Table \ref{T1} provides the value $0.660952004868639$ for $\varrho_1(0,0)$ (with presumably $13$ correct digits), and the value $0.786$ for $\alpha_0$ with 3 correct digits. Refining the sampling of $\alpha$ by $101$ values in the interval $[0.786,0.787]$ yields
\[
   \alpha_0 = 0.78628\quad\mbox{and}\quad \varrho_1(\alpha_0,0)=0.49410921120.
\]

\subsection{Asymptotic analysis in the small angle limit}
In this section, we prove Theorem \ref{1.8Intro}. The presentation is mostly inspired by \cite[Section 2]{RBH} and we just highlight the most important steps and differences.

\subsubsection{Changes of variables}

We are interested in the behavior of the first eigenpair of $\Cr$ (defined in \eqref{eq:Xeps}) as $\varepsilon\to0$. To investigate this, we perform two changes of variables. First, the scaling
\begin{equation}\label{chgmtechelle}
 (\sigma,\tau) \longrightarrow (s,t) \quad\mbox{so that}\quad
   s=\varepsilon \sigma, \quad t=\tau, 
\end{equation}
brings the spectral analysis of the operator $\Cr$ to the following unitarily equivalent operator
\begin{equation}
\label{eq:cLeps}
   \cL_\varepsilon=D_{t}^2+\left(\varepsilon D_{s}+s^2t-\frac{t^3}{3}\right)^2\,.
\end{equation}
Second, we localize in $s$ around a point $\alpha_0$ such that there exists a value $\xi_0$ for which $\varrho_1$ reaches its minimum $\rS_0$ in $(\alpha_0,\xi_0)$.

By the new change of variable
\begin{equation}\label{chgmtechelle2}
  (s,t) \longrightarrow (\fs,\ft) \quad\mbox{so that}\quad
   s=\alpha_0+\varepsilon^{1/2}\fs, \quad t=\ft, 
\end{equation}
and a gauge transform, the operator $\cL_\varepsilon$ becomes
\begin{equation}
\label{eq:fL}
\fL_{\varepsilon}=D_{\ft}^2+\left(\xi_0+\varepsilon^{1/2} D_{\fs}
+\varepsilon\fs^2\ft+2\varepsilon^{1/2}\alpha_0
\fs\ft+\alpha_0^2\ft-\frac{\ft^3}{3}\right)^2.\end{equation}
Thus the above three operators have the same eigenvalues
\[
   \Spec(\Cr) = \Spec(\cL_\varepsilon) = \Spec(\fL_{\varepsilon}).
\]

\begin{prop}
\label{prop:locS0}
For all $N\ge1$, there exist $C_N>0$ and $\varepsilon_N>0$ such that for all $\varepsilon\in (0,\varepsilon_N)$ there exist at least $N$ eigenvalues (counted with multiplicity) of the operator $\Cr$ contained in the ball of radius $C_N\,\varepsilon$ centered at $\rS_0$. 
\end{prop}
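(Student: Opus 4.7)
The plan is to pass to the unitarily equivalent operator $\fL_\varepsilon$ of \eqref{eq:fL} and construct $N$ almost orthonormal quasimodes with approximate eigenvalue $\rS_0$ and residual of order $\varepsilon$; the proposition will then follow from a standard spectral projection argument. Pick a point $(\alpha_0,\xi_0)\in\R^2$ where $\varrho_1$ attains its minimum $\rS_0$ (guaranteed by Theorem \ref{thmprincipalsymboleIntro}) and let $u_0:=u_{\alpha_0,\xi_0}$ be the corresponding normalized positive ground state of $\sX_{\alpha_0,\xi_0}$.

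Setting $P_0(\ft)=\xi_0+\alpha_0^2\ft-\ft^3/3$, $P_1=D_\fs+2\alpha_0\fs\ft$ and $P_2=\fs^2\ft$, expansion of the square in \eqref{eq:fL} yields
$$\fL_\varepsilon=\sX_{\alpha_0,\xi_0}+\varepsilon^{1/2}(P_0P_1+P_1P_0)+\varepsilon(P_0P_2+P_2P_0+P_1^2)+O(\varepsilon^{3/2}).$$
Choose $N$ real, smooth, compactly supported, $L^2$-orthonormal functions $f_1,\ldots,f_N$ on $\R_\fs$ and set $\psi_k(\fs,\ft):=f_k(\fs)u_0(\ft)$. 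Since $\sX_{\alpha_0,\xi_0}$ acts only in $\ft$, we have $\sX_{\alpha_0,\xi_0}\psi_k=\rS_0\psi_k$. A direct computation gives
$$(P_0P_1+P_1P_0)\psi_k=2(P_0u_0)(D_\fs f_k)+4\alpha_0\fs\ft(P_0u_0)f_k,$$
and the Feynman--Hellmann identities of Corollary \ref{idFHmin}, combined with $\partial_\alpha\varrho_1(\alpha_0,\xi_0)=\partial_\xi\varrho_1(\alpha_0,\xi_0)=0$, yield $\int_\R u_0P_0u_0\,\rd\ft=0$ and $\alpha_0\int_\R u_0\ft P_0u_0\,\rd\ft=0$. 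Hence $(P_0P_1+P_1P_0)\psi_k$ is, for every $\fs$, orthogonal to $u_0$ in $L^2(\R_\ft)$.

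Since $\rS_0$ is a simple eigenvalue of $\sX_{\alpha_0,\xi_0}$, the Fredholm alternative applied fiberwise in $\fs$ furnishes a unique $v_k(\fs,\cdot)\in u_0^{\perp}$ solving $(\sX_{\alpha_0,\xi_0}-\rS_0)v_k=-(P_0P_1+P_1P_0)\psi_k$; the function $v_k$ is smooth, compactly supported in $\fs$ and exponentially decaying in $\ft$, so in particular $v_k\in\Dom(\fL_\varepsilon)$. Setting $\widetilde{\psi}_k^\varepsilon:=\psi_k+\varepsilon^{1/2}v_k$, the contributions at order $\varepsilon^{1/2}$ cancel by construction and one obtains, uniformly in $k,l\in\{1,\ldots,N\}$,
$$\big\|(\fL_\varepsilon-\rS_0)\widetilde{\psi}_k^\varepsilon\big\|_{L^2(\R^2)}\le C_N\varepsilon\quad\text{and}\quad\langle\widetilde{\psi}_k^\varepsilon,\widetilde{\psi}_l^\varepsilon\rangle=\delta_{kl}+O(\varepsilon^{1/2}).$$

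To conclude, let $\Pi$ denote the spectral projection of $\fL_\varepsilon$ onto $[\rS_0-C'_N\varepsilon,\rS_0+C'_N\varepsilon]$. The classical resolvent bound $\|(\mathrm{Id}-\Pi)\widetilde{\psi}_k^\varepsilon\|\le(C'_N\varepsilon)^{-1}\|(\fL_\varepsilon-\rS_0)\widetilde{\psi}_k^\varepsilon\|$, combined with the near-orthonormality of $(\widetilde{\psi}_k^\varepsilon)_k$, shows that for $C'_N$ large enough the vectors $(\Pi\widetilde{\psi}_k^\varepsilon)_{k=1}^N$ are linearly independent, hence $\dim\mathrm{Range}(\Pi)\ge N$. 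The isospectrality of $\fL_\varepsilon$ with $\Cr$ then yields the proposition. The key point—and what distinguishes the asymptotic scale $\varepsilon$ from the naive scale $\varepsilon^{1/2}$—is the exact cancellation at order $\varepsilon^{1/2}$, granted by the Feynman--Hellmann identities at a critical point of $\varrho_1$; the main technical subtlety lies in verifying that the fiberwise correction $v_k$ has enough regularity and decay to live in $\Dom(\fL_\varepsilon)$ and that the remaining terms after cancellation still admit an $L^2(\R^2)$ bound of order $\varepsilon$.
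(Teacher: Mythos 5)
Your proposal is correct and follows essentially the same strategy as the paper's own proof: pass to $\fL_\varepsilon$, take quasimodes of the form $f(\fs)u_{\alpha_0,\xi_0}(\ft)$ plus an $\varepsilon^{1/2}$ corrector killing the $\fL_1$-contribution, and observe that criticality of $\varrho_1$ at $(\alpha_0,\xi_0)$ (via the Feynman--Hellmann identities) is precisely the solvability condition that makes the $\varepsilon^{1/2}$ term vanish with $\varkappa_1=0$. The only superficial differences are that the paper writes the corrector explicitly as a combination of $(\partial_\alpha u)_{\alpha_0,\xi_0}$ and $(\partial_\xi u)_{\alpha_0,\xi_0}$ (which is exactly what your fiberwise Fredholm inverse produces), and that you spell out in more detail the final near-orthogonality and spectral-projection step which the paper only sketches.
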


\begin{proof}
 We can write
\begin{equation*}
   \fL_{\varepsilon}=\fL_{0} +\varepsilon^{1/2}\fL_{1}+\mathcal{O}(\varepsilon),
\end{equation*}
with
\begin{equation*}
   \fL_{0}=\sX_{\alpha_0, \xi_0} \quad\mbox{and}\quad
   \fL_{1}
   = 2\left(\xi_{0}+\alpha_{0}^2\ft-\frac{\ft^3}{3}\right)\left(D_{\fs}+2\alpha_0\fs\ft\right).
\end{equation*}
We are looking for quasimodes in the form
$$\psi=\psi_{0}+\varepsilon^{\frac{1}{2}}\psi_{1} \quad \text{and} \quad \varkappa=\varkappa_{0}+\varepsilon^{\frac{1}{2}}\varkappa_{1},$$
such that
$\fL_\varepsilon\psi =\varkappa\psi+\mathcal{O}(\varepsilon)$ is satisfied.
Gathering the terms in $\varepsilon^{0}$ we get the equation 
\[
   \sX_{\alpha_0,\xi_0}\psi_0=\varkappa_0\psi_0\,.
\]
Thus $\varkappa_{0}$ is in the spectrum of $\sX_{\alpha_{0}, \xi_{0}}$ and that $\psi_0$ is an associated eigenfunction. We choose
\begin{equation*}
   \varkappa_{0}=\rS_0,
\end{equation*} 
and we take $\psi_{0}$ (unitary) in the following form 
\begin{equation}\label{DSFchoix2croix}
   \psi_0(\fs,\ft)=f_0(\fs)\,u_0(\ft)\,,
\end{equation} 
with $u_0=u_{\alpha_0,\xi_0}$ and $f_0$ in the Schwartz class. 

Gathering the terms in $\varepsilon^{1/2}$, we get the equation 
\begin{equation}
\label{eq:fL1}
   (\fL_{0}-\rS_0)\psi_{1}
   =-(\fL_{1} - \varkappa_{1})\psi_{0}\,.
\end{equation}
We have
\[
   \fL_{1}\psi_{0}(\fs,\ft)=
   \left(\xi_{0}+\alpha_{0}^2\ft-\frac{\ft^3}{3}\right)
   \Big(2 u_{\alpha_{0},\xi_{0}}(\ft) \, (D_{\fs}f_{0})(\fs)
   +4\alpha_0\ft\, u_{\alpha_{0},\xi_{0}}(\ft)\, \fs f_{0}(\fs)\Big)\,.\]
At this point, we introduce the notation
\[
   \left(\partial_{\alpha}u\right)_{\alpha_0,\xi_0}=
   \left(\partial_{\alpha}u_{\alpha,\xi}\right)_{(\alpha,\xi)=(\alpha_0,\xi_0)} 
   \quad \text{and} \quad 
   \left(\partial_{\xi}u\right)_{\alpha_0,\xi_0}=
   \left(\partial_{\xi}u_{\alpha,\xi}\right)_{(\alpha,\xi)=(\alpha_0,\xi_0)}\,.
\]
Taking the derivative of  the equation $\sX_{\alpha,\xi}u_{\alpha,\xi} = \varrho_1(\alpha,\xi)\,u_{\alpha,\xi}$ with respect to $\alpha$ and $\xi$, we obtain
\begin{subequations}
\begin{align}
   \big(\sX_{\alpha,\xi} - \varrho_1(\alpha,\xi)\big) \left(\partial_{\alpha}u\right)_{\alpha,\xi} &=
   \left(\partial_\alpha \varrho_1(\alpha,\xi) - 4\alpha\ft
   \left(\xi_{0}+\alpha_{0}^2\ft-\frac{\ft^3}{3}\right) \right) u_{\alpha,\xi} \\
   \big(\sX_{\alpha,\xi} - \varrho_1(\alpha,\xi)\big) \left(\partial_{\xi}u\right)_{\alpha,\xi} &=
   \left(\partial_\alpha \varrho_1(\alpha,\xi) - 2
   \left(\xi_{0}+\alpha_{0}^2\ft-\frac{\ft^3}{3}\right) \right) u_{\alpha,\xi} \,.
\end{align}
\end{subequations}
In particular, for $(\alpha,\xi)$ equal to the critical point $(\alpha_0,\xi_0)$ we find that
\[
\begin{aligned}
   (\fL_{0}-\rS_0) \left(\partial_{\alpha}u\right)_{\alpha_0,\xi_0} &=
   - 4\alpha\ft
   \big(\xi_{0}+\alpha_{0}^2\ft-\tfrac{\ft^3}{3}\big) u_{\alpha_0,\xi_0} \\
   (\fL_{0}-\rS_0) \left(\partial_{\xi}u\right)_{\alpha_0,\xi_0} &=
   - 2
   \big(\xi_{0}+\alpha_{0}^2\ft-\tfrac{\ft^3}{3}\big) u_{\alpha_0,\xi_0} .
\end{aligned}
\]
Thus we have obtained explicit solutions of equation \eqref{eq:fL1} as:
\begin{equation}\label{DSFimpose2croix}
   \psi_{1}(\fs,\ft) = (D_{\fs}f_{0})(\fs) \, \left(\partial_{\alpha}u\right)_{\alpha_0,\xi_0}\!(\ft)
   + \fs f_{0}(\fs) \, \left(\partial_{\xi}u\right)_{\alpha_0,\xi_0}\!(\ft)
   \quad\mbox{with}\quad
   \varkappa_{1}=0\,.
\end{equation}
Therefore, for any function $\psi$ in the form
$$\psi=\psi_0+\varepsilon^{1/2}\psi_1,$$
where $\psi_0$ and $\psi_1$ are respectively given by in \eqref{DSFchoix2croix} and \eqref{DSFimpose2croix}, we have 
\[\| \left(\fL_\varepsilon-\rS_0\right)\psi\| \le C\varepsilon\| \psi\|\,.\]
In the construction procedure of $\psi_0$ and $\psi_1$, the function $f_{0}$ is left undetermined. Therefore the above estimate holds on finite dimensional spaces of arbitrary dimensions, which ends the proof of the proposition.
\end{proof}

\subsubsection{Localization estimates}
The following lemma is crucial to follow the strategy in \cite[Section 2]{RBH}. Note in particular that \cite[Assumption 1.7]{RBH} is not obviously checked in the present context.
\begin{lem}
\label{lem:gapunif}
There exists $C_{2}>0$ such that, for all $\varepsilon\in(0,1)$, and all $\psi$ in the form domain of $\Cr$,
\begin{equation}\label{eq.cinfty}
   \|m_{\varepsilon}(\sigma,\tau)^{1/4}\psi\|^2 \le C_2\left(\langle\Cr\psi,\psi\rangle + \|\psi\|^2\right)\,,
\end{equation}
where
\[m_{\varepsilon}(\sigma,\tau)=|\varepsilon^2\sigma^2-\tau^2|+2|\sigma|\varepsilon^2+2|\tau|\,.\]
\end{lem}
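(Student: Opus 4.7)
The plan is to deduce this inequality as a Helffer-Mohamed type maximal estimate for $\Cr$, reducing the fractional weight $m_\varepsilon^{1/2}$ to integer-power weights via elementary algebra.

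\textbf{Step 1 (commutator algebra).} Write $\Cr = X^2+Y^2$ with $X = D_\sigma + A_1^\varepsilon$, $A_1^\varepsilon = \varepsilon^2\sigma^2\tau - \tau^3/3$, and $Y = D_\tau$, so that $\langle\Cr\psi,\psi\rangle = \|X\psi\|^2+\|Y\psi\|^2$. A direct computation yields $[X,Y] = i\BB^\varepsilon$ with $\BB^\varepsilon = \varepsilon^2\sigma^2 - \tau^2$, together with the iterated commutators $[X,[X,Y]] = 2\varepsilon^2\sigma = \partial_\sigma\BB^\varepsilon$ and $[Y,[X,Y]] = -2\tau = \partial_\tau\BB^\varepsilon$. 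These brackets, together with the identity, form a step-$2$ Hörmander system whose coefficients are bounded uniformly for $\varepsilon \in (0,1)$.

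\textbf{Step 2 (algebraic reduction).} Since $|\nabla\BB^\varepsilon|^2 = 4\varepsilon^4\sigma^2 + 4\tau^2$, one has $2\varepsilon^2|\sigma| + 2|\tau| \le \sqrt{2}\,|\nabla\BB^\varepsilon|$, and thus $m_\varepsilon \le |\BB^\varepsilon| + \sqrt{2}\,|\nabla\BB^\varepsilon|$. Combined with the elementary inequality $t^{1/2} \le 1 + t^{2/3}$ for $t \ge 0$, this gives
$$m_\varepsilon^{1/2} \le 2 + |\BB^\varepsilon| + 2^{1/4}|\nabla\BB^\varepsilon|^{2/3}.$$
Hence it suffices to prove the maximal inequality
$$\int \bigl(|\BB^\varepsilon| + |\nabla\BB^\varepsilon|^{2/3}\bigr)|\psi|^2 \,\rd\sigma\,\rd\tau\le C\bigl(\langle\Cr\psi,\psi\rangle + \|\psi\|^2\bigr),$$
with $C$ independent of $\varepsilon$.

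\textbf{Step 3 (maximal inequality).} For the term $|\BB^\varepsilon|$, the identity $\langle[X,Y]\psi,\psi\rangle = 2i\,\mathrm{Im}\langle Y\psi,X\psi\rangle$ immediately yields the signed bound $|\int\BB^\varepsilon|\psi|^2| \le \langle\Cr\psi,\psi\rangle$. To pass to the absolute value one follows the Helffer-Mohamed strategy: introduce a dyadic partition of unity on the scale $\delta(\sigma,\tau)=(1+|\BB^\varepsilon|+|\nabla\BB^\varepsilon|^{2/3})^{-1/2}$, apply an IMS localization formula (as in \eqref{formule de localisation}), and on each cell where $\BB^\varepsilon$ changes sign use the step-$2$ commutators to dominate $|\BB^\varepsilon|$ by $|\nabla\BB^\varepsilon|^{2/3}$. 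A parallel argument, using $[X,[X,Y]]$ and $[Y,[X,Y]]$ in place of $[X,Y]$, controls $\int|\nabla\BB^\varepsilon|^{2/3}|\psi|^2$.

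\textbf{Main obstacle.} The delicate point—flagged by the authors just before the statement—is the uniformity of $C$ as $\varepsilon\to 0$: a direct appeal to Helffer-Mohamed produces a constant depending on the angle between the two lines $\{\tau=\pm\varepsilon\sigma\}$, which collapses to zero. Uniformity must be recovered from the explicit polynomial structure of $\BB^\varepsilon$ and $\nabla\BB^\varepsilon$: the $\varepsilon$-prefactor in $\partial_\sigma\BB^\varepsilon=2\varepsilon^2\sigma$ is compensated by the unboundedness of the $\sigma$-region where it is relevant, so that the dyadic scale $\delta(\sigma,\tau)$ produces a decomposition of $\R^2$ whose combinatorics do not deteriorate as $\varepsilon\to 0$. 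Tracking this compensation through the Helffer-Mohamed argument is the technical heart of the proof.
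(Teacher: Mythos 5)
Your strategy is right in spirit: the estimate is a Helffer--Mohamed maximal inequality built on the commutator algebra of $X=D_\sigma+A_1^\varepsilon$, $Y=D_\tau$, and you identify the bracket structure correctly. But there are two issues, one cosmetic and one serious.

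The cosmetic issue is Step 2. The paper's weight $m_\varepsilon=|\BB^\varepsilon|+|\partial_\sigma\BB^\varepsilon|+|\partial_\tau\BB^\varepsilon|$ is \emph{already} the weight that appears in \cite[Th\'eor\`eme (1.1)]{HM88} with $r=1$, so there is nothing to reduce: the cited theorem gives $\|m_\varepsilon^{1/4}\psi\|^2\le C_2(\langle\Cr\psi,\psi\rangle+\|\psi\|^2)$ directly. Your replacement of $m_\varepsilon^{1/2}$ by $2+|\BB^\varepsilon|+2^{1/4}|\nabla\BB^\varepsilon|^{2/3}$ trades the target for a \emph{strictly stronger} inequality (when $|\nabla\BB^\varepsilon|\gg 1$ one has $m_\varepsilon^{1/2}\approx|\nabla\BB^\varepsilon|^{1/2}\ll|\nabla\BB^\varepsilon|^{2/3}$, and similarly $|\BB^\varepsilon|^{1/2}\ll|\BB^\varepsilon|$), so this is a detour that makes the problem harder, not easier.

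The serious issue is that the argument stops precisely where the work lies. You write that ``a direct appeal to Helffer--Mohamed produces a constant depending on the angle'' and that ``tracking this compensation... is the technical heart of the proof'', but you do not carry out that tracking. The paper does appeal directly to \cite{HM88}, and the whole content of the proof is exactly the uniformity you defer: one checks that the structural hypothesis \cite[(1.9)]{HM88} holds with a constant $C_1=8$ \emph{independent of} $\varepsilon$ (here $\sum_{|\alpha|=2}|\partial^\alpha\BB^\varepsilon|=2\varepsilon^2+2\le 4$), and then observes that the constant $C_2$ produced by the proof of \cite[Th\'eor\`eme (1.1)]{HM88} depends only on $C_1$, on $n$ and $r$, and on commutators that are polynomial (in fact analytic) in $\varepsilon$, hence is bounded uniformly for $\varepsilon\in(0,1)$. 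Your heuristic about ``compensation by the unboundedness of the $\sigma$-region'' is not a substitute for this check, and your premise that a direct appeal to Helffer--Mohamed fails is, as the paper shows, not correct. As written, the proposal is an outline of the \cite{HM88} machinery with the decisive step left open, rather than a proof.
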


\begin{proof}
The lemma follows from \cite[Th\'eor\`eme (1.1)]{HM88}. We only have to check the assumptions of the theorem and the uniformity with respect to $\varepsilon$.
Here the magnetic field has one component $B=\varepsilon^2\sigma^2-\tau^2$. We take the integer $r$ used there as $1$. The condition \cite[(1.9)]{HM88} is trivially satisfied for $C_1=8$
\[
   2\varepsilon^2+2 \le C_1(m(\sigma,\tau)+1)\,.
\]
Then \cite[(1.11)]{HM88} tells us that
\[
   \forall\psi\in\mathcal{C}^\infty_0(\R^2),\quad
   \|m(\sigma,\tau)^{1/4}\psi\|^2 \le C_2\left(\langle\Cr\psi,\psi\rangle + \|\psi\|^2\right)\,.
\]
A careful check of the proof of \cite[Th\'eor\`eme (1.1)]{HM88}, involving only commutators of analytic vector fields with respect to $\varepsilon$, shows that $C_2$ does not depend on $\varepsilon\in(0,1)$. The extension of this estimate to the form domain follows by density.
\end{proof}

We can reformulate Lemma \ref{lem:gapunif} in terms of the operator $ \cL_\varepsilon$ and deduce the following.
\begin{prop}
\label{prop:gapunif2}
There exists $C_{2}>0$ such that, for all $\varepsilon\in(0,1)$, and all $\psi$ in the form domain of $\cL_{\varepsilon}$,
\begin{equation}
   \|\left[|s^2-t^2|+2|t|\right]^{1/4}\psi\|^2 \le C_2\left(\langle \cL_\varepsilon\psi,\psi\rangle + \|\psi\|^2\right)\,.
\end{equation}
In particular, for all $\rS_0^*>\rS_0$ there exists $R_\ast >0$ such that, for all $\varepsilon\in(0,1)$,
\[\cL_\varepsilon^{\Dir,R_\ast}\ge \rS_0^*\,,\]
where $\cL_\varepsilon^{\Dir,R_\ast}$ is the Dirichlet realization of the operator $\cL_\varepsilon$ outside the ball of center $0$ and radius $R_\ast$.
\end{prop}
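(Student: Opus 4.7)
The plan is to deduce both statements directly from Lemma \ref{lem:gapunif} via the scaling \eqref{chgmtechelle}: the first estimate is a translation of that lemma to the $(s,t)$ variables, and the Dirichlet lower bound then follows from the coercivity of the weight function at infinity.

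For the first claim, I would introduce the unitary isomorphism $U_\varepsilon : L^2(\R^2_{\sigma,\tau}) \to L^2(\R^2_{s,t})$ intertwining $\Cr$ with $\cL_\varepsilon$, namely $(U_\varepsilon\varphi)(s,t) = \varepsilon^{-1/2}\varphi(s/\varepsilon,t)$. Applying Lemma \ref{lem:gapunif} to $\varphi = U_\varepsilon^{-1}\psi$ and pushing the weight through the change of variables gives
\[
   m_\varepsilon(s/\varepsilon,\,t) = |s^2 - t^2| + 2\varepsilon|s| + 2|t|.
\]
The desired inequality then follows by discarding the non-negative term $2\varepsilon|s|$ (which only enlarges the weight), using crucially that the constant $C_2$ produced by Lemma \ref{lem:gapunif} is independent of $\varepsilon\in(0,1)$. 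Extension from $\mathcal{C}^\infty_0$ to the form domain of $\cL_\varepsilon$ is by density.

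For the Dirichlet bound, I would argue that the weight $W(s,t) := |s^2-t^2| + 2|t|$ tends to $+\infty$ uniformly as $r:=\sqrt{s^2+t^2}\to\infty$. A short case analysis depending on whether $|t|\ge r/2$ (so $W\ge 2|t|\ge r$) or $|t|<r/2$ (so $|s^2-t^2|\ge r^2/2$) shows that $W(s,t) \ge \min(r, r^2/2)$. Hence, for $\psi$ in the form domain of $\cL_\varepsilon^{\Dir,R_\ast}$, extended by zero to $\R^2$, we obtain $\|W^{1/4}\psi\|^2 \ge \min(R_\ast, R_\ast^2/2)^{1/2}\|\psi\|^2$ on its support. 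Plugging this into the first inequality and rearranging yields
\[
   \langle\cL_\varepsilon\psi,\psi\rangle \ge \bigl(C_2^{-1}\min(R_\ast, R_\ast^2/2)^{1/2} - 1\bigr)\|\psi\|^2,
\]
which can be made $\ge \rS_0^*$ by choosing $R_\ast$ large, independently of $\varepsilon$.

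The only genuine point to watch is the $\varepsilon$-uniformity of the constants when translating between $\Cr$ and $\cL_\varepsilon$, and in particular the harmlessness of dropping the $\varepsilon$-dependent term $2\varepsilon|s|$. Since Lemma \ref{lem:gapunif} already guarantees the uniformity in $\varepsilon$, once the weight transformation is verified the Dirichlet bound is immediate and no further analysis is required.
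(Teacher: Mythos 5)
Your proof is correct and follows essentially the same route as the paper: push the weight of Lemma \ref{lem:gapunif} through the change of variables $s=\varepsilon\sigma$, $t=\tau$, discard the $\varepsilon$-dependent term $2\varepsilon|s|$ (harmless since we want an upper bound on the weighted norm and $|s^2-t^2|+2|t|\le m_\varepsilon(s/\varepsilon,t)$), and deduce the Dirichlet lower bound from the coercivity of $|s^2-t^2|+2|t|$ at infinity. The only difference is that you quantify the coercivity via the case split on $|t|\gtrless r/2$, whereas the paper merely notes the weight tends to $+\infty$; your version is more explicit but not a different argument.
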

\begin{proof}
To deduce the second assertion, it suffices to note that
$\displaystyle{\lim_{|s|+|t|\to+\infty}|s^2-t^2|+2|t|=+\infty}$.
\end{proof}
It is classical to deduce from Proposition \ref{prop:gapunif2} that the eigenfunctions associated with the low lying eigenvalues satisfy Agmon estimates with respect to $(s,t)$ (see, for instance, \cite[Section 2.2]{RBH}). By taking derivatives of the eigenvalue equation, we finally get the following corollary.

\begin{cor}\label{petitemicro}
Let $C_0$ and $k,l,d\in \mathbb{N}$. There exist $\varepsilon_0>0, C>0$, and $c_0>0$ such that for all $\varepsilon\in(0,\varepsilon_0)$ and all eigenpairs $(\varkappa,\psi)$ of the operator $\cL_\varepsilon$ with $\varkappa\le \rS_0+C_0\varepsilon$, we have
\[\| t^k s^l\psi\| \le C\| \psi\|, \quad \mathcal{Q}_\varepsilon(t^k s^l\psi) \le C\| \psi\|^2,\]
\[\| (D_t)^d s^l t^k\psi\| \le C\| \psi\|, \quad \| (\varepsilon D_s)^d s^l t^k\psi)\| \le C\| \psi\|\,.\]
\end{cor}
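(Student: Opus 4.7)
The plan is to deduce these polynomial weight and derivative estimates from standard Agmon-type arguments, bootstrapped from Proposition \ref{prop:gapunif2}.

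First, I would use the uniform gap estimate $\cL_\varepsilon^{\Dir,R_\ast}\ge\rS_0^\ast>\rS_0$ of Proposition \ref{prop:gapunif2} to prove Agmon estimates in the $(s,t)$ variables: for the eigenpair $(\varkappa,\psi)$ with $\varkappa\le \rS_0+C_0\varepsilon$, and for $c_0>0$ small enough, there holds
\[
   \int_{\R^2} \re^{2c_0\sqrt{s^2+t^2}}|\psi(s,t)|^2\,\rd s\,\rd t + \mathcal{Q}_\varepsilon\bigl(\re^{c_0\sqrt{s^2+t^2}}\psi\bigr) \le C\|\psi\|^2,
\]
uniformly in $\varepsilon\in(0,\varepsilon_0)$. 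The argument is the standard one: test the identity $\mathcal{Q}_\varepsilon(\re^\Phi \psi)=\varkappa\|\re^\Phi\psi\|^2+\|(\nabla\Phi)\re^\Phi\psi\|^2$ with $\Phi=c_0\sqrt{s^2+t^2}$, split the integration region according to the ball $\cB(0,R_\ast)$ of Proposition \ref{prop:gapunif2}, absorb the exterior contribution using the gap $\rS_0^\ast-\rS_0>0$, and control the interior contribution by $C\|\psi\|^2$. The uniformity in $\varepsilon$ follows from the $\varepsilon$-uniformity of Lemma \ref{lem:gapunif}.

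From this exponential decay the moment bounds $\|t^k s^l \psi\|\le C\|\psi\|$ follow immediately for every $k,l\in\N$. For the quadratic form estimate, I would multiply the eigenvalue equation $\cL_\varepsilon\psi=\varkappa\psi$ by $(t^ks^l)^2\bar\psi$ and integrate by parts, or equivalently use the localization identity
\[
   \mathcal{Q}_\varepsilon(t^ks^l\psi) = \varkappa \|t^ks^l\psi\|^2 + \bigl\|[D_t,t^ks^l]\psi\bigr\|^2 + \bigl\|[\varepsilon D_s,t^ks^l]\psi\bigr\|^2,
\]
whose commutator terms equal $k^2\|t^{k-1}s^l\psi\|^2+\varepsilon^2 l^2\|t^ks^{l-1}\psi\|^2$ and are already controlled by the moment bounds. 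Note the crucial appearance of the factor $\varepsilon^2$ in the $s$-commutator, which is responsible for the uniformity in $\varepsilon$.

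The derivative estimates are then extracted from $\mathcal{Q}_\varepsilon(t^ks^l\psi)=\|D_t(t^ks^l\psi)\|^2+\|(\varepsilon D_s+s^2t-\tfrac{t^3}{3})(t^ks^l\psi)\|^2$. Expanding the Leibniz formula $D_t(t^ks^l\psi)=t^ks^l D_t\psi-ik t^{k-1}s^l\psi$ yields $\|t^k s^l D_t\psi\|\le C\|\psi\|$, and using the polynomial moment bounds to absorb $\|(s^2t-\tfrac{t^3}{3})t^ks^l\psi\|$ gives $\|t^k s^l(\varepsilon D_s)\psi\|\le C\|\psi\|$. For $d\ge2$, I would proceed by induction on $d$: commuting $D_t$ (resp. $\varepsilon D_s$) with $\cL_\varepsilon$ produces first-order operators with polynomial coefficients whose norms on $\psi$ are controlled by the previous step applied to $(D_t\psi)$ or $(\varepsilon D_s\psi)$, so iteration closes the estimates. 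The main obstacle throughout is ensuring the $\varepsilon$-uniformity of every constant, which rests on two facts: (i) every $s$-differentiation appearing in the problem is paired with $\varepsilon$ (hence the commutator $[\varepsilon D_s,s^l]$ carries $\varepsilon^2l^2$), and (ii) Lemma \ref{lem:gapunif} provides the $\varepsilon$-uniform basic a priori bound that feeds the Agmon argument.
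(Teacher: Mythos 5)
Your proposal follows the same route the paper sketches: derive $\varepsilon$-uniform Agmon estimates in the $(s,t)$ variables from the uniform gap of Proposition \ref{prop:gapunif2} (itself resting on the $\varepsilon$-uniformity of Lemma \ref{lem:gapunif}), extract the polynomial moment bounds, and then bootstrap the derivative bounds by differentiating the eigenvalue equation. The details you supply — the Agmon-type localization identity $\mathcal{Q}_\varepsilon(\Phi\psi)=\varkappa\|\Phi\psi\|^2+\|(\partial_t\Phi)\psi\|^2+\varepsilon^2\|(\partial_s\Phi)\psi\|^2$ for real weights $\Phi$, the favorable $\varepsilon^2$ factor in the $s$-commutator, and the induction on $d$ via commutation of $D_t$ and $\varepsilon D_s$ with $\cL_\varepsilon$ — are consistent with what the paper leaves implicit.
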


\subsubsection{Coherent states}
Following a classical formalism, see \cite{B71, CR12} for instance, we introduce the annihilation operator $a$ 
and the creation operator $a^*$
\[a=\frac{1}{\sqrt{2}}\left(\fs+\partial_{\fs}\right)\,,\quad a^*=\frac{1}{\sqrt{2}}\left(\fs-\partial_{\fs}\right)\,.\]
We have the following identities
\begin{equation}\label{eq.sds}
\fs=\frac{a+a^*}{\sqrt{2}}\,, \quad \partial_{\fs}=\frac{a-a^*}{\sqrt{2}}\,,\quad
\mbox{and}\quad[a,a^*]=1.
\end{equation}
Setting $g_{0}(\fs) = \pi^{-1/4} e^{-\fs^2/2}$,
we introduce the coherent states  for any $u\in\R$ and $p\in\R$, 
\[f_{u,p}(\fs)=e^{ip\fs}g_{0}(\fs-u)\,,\]
and for all $\psi \in L^2(\R)$, the associated projection defined by
\begin{equation*}
\Pi_{u,p}\psi=\big\langle \psi,f_{u,p}\big\rangle_{L^2(\R,\rd\fs)} f_{u,p}\,.
\end{equation*}
We have the resolution of the identity
\[\psi=\int_{\R^2} \Pi_{u,p}\psi\,\rd u\rd p\,,\]
and the Parseval formula
\[\|\psi\|^2=\int_{\R}\int_{\R^2} |\Pi_{u,p}\psi|^2\,\rd u\rd p\,\rd \fs\,.\]

\begin{lem}[\cite{B71}]\label{lem3}For all $n,m \in \mathbb{N}$, we have
\begin{equation*}
   (a)^m(a^*)^n=
   \int_{\R^2} 
   \left(\frac{u+ip}{\sqrt{2}}\right)^m\left(\frac{u-ip}{\sqrt{2}}\right)^n\Pi_{u,p}\,\rd u\rd p.
\end{equation*}
\end{lem}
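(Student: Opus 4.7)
The plan is to derive the identity from two ingredients: each coherent state $f_{u,p}$ is an eigenfunction of the annihilation operator $a$ with eigenvalue $\lambda:=(u+ip)/\sqrt{2}$, and the resolution of identity $\int_{\R^2}\Pi_{u,p}\,\rd u\rd p=\mathrm{Id}$ recalled just above the lemma. I would first verify $a\,f_{u,p}=\lambda\,f_{u,p}$ by direct computation: since $g_0'(\fs)=-\fs\,g_0(\fs)$, applying $a=\tfrac{1}{\sqrt{2}}(\fs+\partial_\fs)$ to $f_{u,p}(\fs)=e^{ip\fs}g_0(\fs-u)$ collapses after one cancellation to $\lambda\,f_{u,p}$; iterating yields $a^m f_{u,p}=\lambda^m f_{u,p}$. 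Taking the $L^2$-adjoint, for any Schwartz $\psi$,
\[
   \langle (a^*)^n\psi,\,f_{u,p}\rangle
   = \langle \psi,\,a^n f_{u,p}\rangle
   = \overline{\lambda^n}\,\langle \psi,\,f_{u,p}\rangle
   = \Bigl(\tfrac{u-ip}{\sqrt{2}}\Bigr)^n\langle \psi,\,f_{u,p}\rangle.
\]

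Second, I would apply the right-hand side of the lemma to a Schwartz vector $\psi$ and insert both identities. The integrand rewrites as $\langle (a^*)^n\psi,\,f_{u,p}\rangle\,a^m f_{u,p}$, and pulling the differential operator $a^m$ out of the $(u,p)$-integral transforms the right-hand side into
\[
   a^m \int_{\R^2}\langle (a^*)^n\psi,\,f_{u,p}\rangle\,f_{u,p}\,\rd u\rd p
   = a^m\int_{\R^2}\Pi_{u,p}(a^*)^n\psi\,\rd u\rd p
   = a^m(a^*)^n\psi,
\]
the last equality being the resolution of identity applied to $(a^*)^n\psi\in L^2(\R)$. The identity then extends by closure to the natural operator domain of $a^m(a^*)^n$.

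The only delicate step is justifying the commutation of $a^m$ with the $(u,p)$-integration. On the Schwartz class this is harmless: the scalar factor $\langle (a^*)^n\psi,\,f_{u,p}\rangle$ decays rapidly in $(u,p)$ (essentially a Bargmann-type transform of a Schwartz function), while $a^m f_{u,p}$ depends polynomially on $(u,p)$ and retains Gaussian decay in $\fs$, so differentiation under the integral sign applies without further argument. No deeper analytic input is required.
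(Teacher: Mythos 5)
The paper does not prove this lemma; it is cited directly from Berezin \cite{B71}, so there is no in-paper argument to compare against. Your proof is correct and reproduces the standard coherent-state derivation: the eigenfunction relation $a\,f_{u,p}=\tfrac{u+ip}{\sqrt2}\,f_{u,p}$, the adjoint transfer of $(a^*)^n$ onto $\psi$, and the resolution of the identity together give the formula on Schwartz vectors.

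One small improvement worth noting: the commutation of $a^m$ with the $(u,p)$-integral---the step you rightly flag as the only delicate one---can be avoided entirely by pairing the right-hand side with a second Schwartz vector $\phi$. Using $\bigl(\tfrac{u+ip}{\sqrt2}\bigr)^m\langle f_{u,p},\phi\rangle=\langle a^m f_{u,p},\phi\rangle=\langle f_{u,p},(a^*)^m\phi\rangle$ together with your identity $\bigl(\tfrac{u-ip}{\sqrt2}\bigr)^n\langle\psi,f_{u,p}\rangle=\langle(a^*)^n\psi,f_{u,p}\rangle$, the pairing becomes
\[
  \int_{\R^2}\langle(a^*)^n\psi,f_{u,p}\rangle\,\langle f_{u,p},(a^*)^m\phi\rangle\,\rd u\,\rd p
  =\langle(a^*)^n\psi,(a^*)^m\phi\rangle
  =\langle a^m(a^*)^n\psi,\phi\rangle,
\]
the first equality being the sesquilinear form of the resolution of the identity. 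This establishes the statement weakly on pairs of Schwartz functions with no differentiation under the integral sign, so the dominated-convergence justification you sketch is unnecessary (though, as you argue, it does go through).
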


Now we have all tools at hand to end the proof of Theorem \ref{1.8Intro}. Taking Proposition \ref{prop:locS0} into account, it remains to prove the following lemma:
\begin{lem}
\label{Lfin}
Consider an eigenfunction $\Psi_\varepsilon$ of $\Cr$ associated with an eigenvalue $\varkappa(\varepsilon)\leq \rS_{0}+C_0\varepsilon$. Then there exists $C$ only depending on $C_0$ such that
\begin{equation}
\label{eq:fin}
   \varkappa(\varepsilon)\|\Psi_\varepsilon\|^2 = \langle \Cr\Psi_\varepsilon,\Psi_\varepsilon\rangle
   \geq (\rS_{0}-C\varepsilon)\|\Psi_\varepsilon\|^2.
\end{equation}
\end{lem}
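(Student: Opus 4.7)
The plan is to follow the coherent-state/Wick-quantization strategy of \cite[Section 2]{RBH}, taking advantage of the preparatory results just established. By unitary equivalence I work with $\fL_\varepsilon$ defined in \eqref{eq:fL}, and denote by $\Phi_\varepsilon$ the eigenfunction associated with $\varkappa(\varepsilon)$ obtained from $\Psi_\varepsilon$ via the changes of variables \eqref{chgmtechelle}--\eqref{chgmtechelle2}. The starting point is the factorization $\fL_\varepsilon = D_\ft^2+P_\varepsilon^2$ with
\[
   P_\varepsilon = \xi_0+\varepsilon^{1/2}D_{\fs} + (\alpha_0+\varepsilon^{1/2}\fs)^2\ft -\frac{\ft^3}{3}\,.
\]
The crucial observation is that the formal substitution $\fs\mapsto u$, $D_{\fs}\mapsto p$ turns $P_\varepsilon$ into $\xi_0+\varepsilon^{1/2}p+(\alpha_0+\varepsilon^{1/2}u)^2\ft-\ft^3/3$, so that
\[
   D_\ft^2+\bigl(\xi_0+\varepsilon^{1/2}p+(\alpha_0+\varepsilon^{1/2}u)^2\ft-\ft^3/3\bigr)^2=\sX_{\alpha_0+\varepsilon^{1/2}u,\,\xi_0+\varepsilon^{1/2}p}\,,
\]
whose ground-state energy is bounded below by $\rS_0$ thanks to Theorem \ref{thmprincipalsymboleIntro}. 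The aim is to promote this symbolic inequality to an operator inequality up to an $O(\varepsilon)$ error.

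Next I write $\fs$ and $D_{\fs}$ in the creation/annihilation operators via \eqref{eq.sds}, expand $P_\varepsilon^2$ as a (noncommutative) polynomial in $a$ and $a^*$ with coefficients that are differential expressions in $\ft$, and normal-order it. Since each $a$ or $a^*$ appears inside $P_\varepsilon$ with a prefactor $\varepsilon^{1/2}$, every commutator $[a,a^*]=1$ produced during the normal-ordering contributes a factor $\varepsilon$, not $\varepsilon^{1/2}$. Each resulting monomial $(a^*)^n a^m$ is then rewritten by Lemma \ref{lem3} as an integral of $\Pi_{u,p}$ against polynomial weights in $(u,p)$. Defining $g_{u,p}(\ft)=\langle \Phi_\varepsilon(\cdot,\ft),f_{u,p}\rangle_{L^2(\R_{\fs})}$, the resolution of identity and Parseval formula give $\Phi_\varepsilon(\fs,\ft)=\int g_{u,p}(\ft)f_{u,p}(\fs)\,\rd u\rd p$ and $\|\Phi_\varepsilon\|^2=\int\|g_{u,p}\|^2_{L^2(\R_\ft)}\rd u\rd p$.

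Putting these ingredients together yields the decomposition
\[
   \langle \fL_\varepsilon\Phi_\varepsilon,\Phi_\varepsilon\rangle
   =\int_{\R^2}\bigl\langle \sX_{\alpha_0+\varepsilon^{1/2}u,\,\xi_0+\varepsilon^{1/2}p}\,g_{u,p},\,g_{u,p}\bigr\rangle_{L^2(\R_\ft)}\,\rd u\rd p+R_\varepsilon\,,
\]
where $R_\varepsilon$ is a finite sum of terms of the form $\varepsilon^{k/2}$ (with $k\ge 2$) times integrals of mixed moments $\fs^j\ft^\ell\Phi_\varepsilon$ and of $D_\ft$-derivatives thereof. Since $\varkappa(\varepsilon)\le \rS_0+C_0\varepsilon$, Corollary \ref{petitemicro} provides uniform bounds $\|s^jt^\ell\Phi_\varepsilon\|$ and $\|D_\ft^d s^jt^\ell\Phi_\varepsilon\|\le C\|\Phi_\varepsilon\|$; after the change of variables \eqref{chgmtechelle2} this passes to the analogous bounds in $(\fs,\ft)$ with an extra harmless $\varepsilon^{-1/2}$ per $\fs$, which are absorbed since the compensating prefactor is at least $\varepsilon$. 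One concludes $|R_\varepsilon|\le C\varepsilon\|\Phi_\varepsilon\|^2$.

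Finally, Theorem \ref{thmprincipalsymboleIntro} gives $\sX_{\alpha,\xi}\ge\rS_0$ as an operator for every $(\alpha,\xi)\in\R^2$, so each integrand in the main term is at least $\rS_0\|g_{u,p}\|^2$. Integrating and applying Parseval provides $\rS_0\|\Phi_\varepsilon\|^2$, and combining with the bound on $R_\varepsilon$ yields \eqref{eq:fin}. The main obstacle is the bookkeeping step: showing rigorously that, after normal ordering, every nontrivial commutator term that is not absorbed into $\sX_{\alpha_0+\varepsilon^{1/2}u,\,\xi_0+\varepsilon^{1/2}p}$ carries at least one factor of $\varepsilon$ and couples only to moments of $\Phi_\varepsilon$ already controlled by Corollary \ref{petitemicro}; this requires care because the term $\varepsilon\fs^2\ft$ in $P_\varepsilon$ contributes higher-order polynomials in $(a,a^*)$ whose commutators must be tracked explicitly.
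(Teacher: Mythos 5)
Your proposal follows the same coherent-state / anti-Wick quantization strategy as the paper, and the argument is essentially correct. Two small remarks. First, the paper does not recentre at $\alpha_0$: it applies the bare scaling $s=\varepsilon^{1/2}\fs$ to $\cL_\varepsilon$ (producing $\widetilde{\fL}_\varepsilon$, not the $\fL_\varepsilon$ of \eqref{eq:fL}) and uses the \emph{uniform} bound $\varrho_1\ge \rS_0$ so that the symbol $\sX_{u\sqrt\varepsilon,\,p\sqrt\varepsilon}$ suffices; your centred version $\sX_{\alpha_0+u\sqrt\varepsilon,\,\xi_0+p\sqrt\varepsilon}$ works equally well for exactly the same reason, so this is only a cosmetic difference. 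Second, and more substantively, you write that you ``normal-order'' $P_\varepsilon^2$ and then apply Lemma \ref{lem3} to monomials $(a^*)^n a^m$; but Lemma \ref{lem3} is stated for $a^m(a^*)^n$, i.e.\ \emph{anti}-normal order (annihilators on the left, which is what the paper does when it says ``put all the $a^*$ on the right''). This is not just terminology: it is the anti-Wick quantization that is positivity-preserving, which is exactly what lets you conclude from $\varrho_1\ge\rS_0$ that the main term is $\ge\rS_0\|\cdot\|^2$ after Parseval. With that correction, your plan is the paper's proof.
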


\begin{proof}
In view of \eqref{eq:cLeps}, it suffices to prove \eqref{eq:fin} for an eigenpair $(\varkappa(\varepsilon),\psi_\varepsilon)$ of the operator $\cL_{\varepsilon}$.
By the scaling $s=\varepsilon^{\frac{1}{2}}\fs$, $\cL_{\varepsilon}$ becomes the unitarily equivalent operator
\[\widetilde{\fL}_{\varepsilon}=D_{\ft}^2+\left(\varepsilon^{1/2} D_{\fs}
+\varepsilon\fs^2\ft-\frac{\ft^3}{3}\right)^2,\]
which we expand as
\[
  \widetilde{\fL}_{\varepsilon}=\widetilde{\fL}_{0}
  +\varepsilon^{1/2}\widetilde{\fL}_{1}
  +\varepsilon \widetilde{\fL}_{2}+\varepsilon^{3/2}\widetilde{\fL}_3
  +\varepsilon^{2}\widetilde{\fL}_4,
\]
where
\begin{equation*}
  \widetilde{\fL}_{0}=D^2_{\ft}+\frac{\ft^6}{9},\:\:\:
  \widetilde{\fL}_{1}=-\frac{2\ft^3}{3}D_{\fs},\:\:\: 
  \widetilde{\fL}_{2}=-\frac{2}{3}\ft^4\fs^2+D^2_{\fs},\:\:\:
  \widetilde{\fL}_{3}=\ft\left(\fs^2D_{\fs}+D_{\fs}\fs^2\right),\:\:\:
  \widetilde{\fL}_{4}=\fs^4\ft^2.
\end{equation*}
We use \eqref{eq.sds} and we commute $a$ and $a^*$ to put all the $a^*$ on the right. With Lemma \ref{lem3}, we get that there exist $(\gamma,\delta)\in\R^2$,  a homogeneous polynomial of order $1$, $L(X,Y)$, and a non-commutative homogeneous polynomial of order $2$, $P(X,Y)$, such that
\begin{equation}\label{eq:Lepst}
  \widetilde{\fL}_{\varepsilon}=
  \fL_{\varepsilon}^{\mathsf{W}}
  +\varepsilon\fL_{2,{\sf{rem}}}
  +\varepsilon^{3/2}\fL_{3,\sf{rem}}
  +\varepsilon^2\fL_{4,\sf{rem}}\,,
\end{equation} 
where
\begin{equation*}
  \fL_{\varepsilon}^{\mathsf{W}}=
  \int_{\R^2} \sX_{u\sqrt\varepsilon, \:p\sqrt\varepsilon}\,\Pi_{u,p} \,\rd u\rd p\,,
\end{equation*}
and
\begin{equation*}
  \fL_{2,\sf{rem}}=\gamma \ft^4+\delta,\quad 
  \fL_{3,\sf{rem}}=\ft L(a,a^*), \quad 
  \fL_{4,\sf{rem}}=\ft^2 P(a,a^*)\,.
\end{equation*}
Consider an eigenfunction $\widetilde{\psi}_{\varepsilon}$ of $\widetilde{\fL}_{\varepsilon}$ associated with an eigenvalue $\varkappa(\varepsilon)\leq \rS_{0}+C_0\varepsilon$. From \eqref{eq:Lepst} and using the quadratic forms $\widetilde{\mathfrak{Q}}_\varepsilon$ of $\widetilde{\fL}_{\varepsilon}$ and $\sQ_{\alpha,\xi}$ of $\sX_{\alpha,\xi}$, we get
\begin{multline}\label{4.28}
   \widetilde{\mathfrak{Q}}_\varepsilon(\widetilde{\psi}_{\varepsilon})\geq 
   \int_{\R}\int_{\R^2} \sQ_{u\sqrt\varepsilon, \:p\sqrt\varepsilon}
   (\Pi_{u,p}\widetilde{\psi}_{\varepsilon})\,\rd u\rd p\,\rd\fs \\
   - \varepsilon\|(\gamma\ft^4+\delta)\widetilde{\psi}_{\varepsilon}\|\,\|\widetilde{\psi}_{\varepsilon}\|
   -\varepsilon^{3/2}\| L(a,a^*)\widetilde{\psi}_{\varepsilon}\|\, \|\ft\widetilde{\psi}_{\varepsilon}\|
   -\varepsilon^2\|P( a,a^*)\widetilde{\psi}_{\varepsilon}\|\,\|\ft^2\widetilde{\psi}_{\varepsilon}\|.
\end{multline}
Since $\rS_0$ is a minimum for the band function of $\sX_{\alpha,\xi}$, we have, for each $(u,p)\in\R^2$ and $\fs\in\R$, the bound from below
\[
   \sQ_{u\sqrt\varepsilon, \:p\sqrt\varepsilon}
   (\Pi_{u,p}\widetilde{\psi}_{\varepsilon})(\fs,\cdot) \ge 
   \rS_0\|\Pi_{u,p}\widetilde{\psi}_{\varepsilon}(\fs,\cdot)\|^2_{L^2(\R,\rd\ft)}\,.
\]
Thus by Parseval formula
\begin{equation}
\label{4.29}
   \int_{\R}\int_{\R^2} \sQ_{u\sqrt\varepsilon, \:p\sqrt\varepsilon}
   (\Pi_{u,p}\widetilde{\psi}_{\varepsilon})\,\rd u\rd p\,\rd\fs \ge
   \rS_0\|\widetilde{\psi}_{\varepsilon}\|^2_{L^2(\R^2,\rd\fs\rd\ft)}.
\end{equation}
Let us estimate the remainder terms in \eqref{4.28}. The homogeneity of the polynomials $P$ and $L$ yields
\[
   \varepsilon^{3/2} L(a,a^*)\widetilde{\psi}_{\varepsilon} =
   \varepsilon L(\varepsilon^{1/2}a,\varepsilon^{1/2}a^*)\widetilde{\psi}_{\varepsilon}
   \quad\mbox{and}\quad
   \varepsilon^{2} P(a,a^*)\widetilde{\psi}_{\varepsilon} =
   \varepsilon P(\varepsilon^{1/2}a,\varepsilon^{1/2}a^*)\widetilde{\psi}_{\varepsilon}.
\]
Thus, using the scaling $s=\varepsilon^{1/2}\fs$ and Corollary \ref{petitemicro}, we find
\[
   \varepsilon\|(\gamma\ft^4+\delta)\widetilde{\psi}_{\varepsilon}\|\,\|\widetilde{\psi}_{\varepsilon}\|
   +\varepsilon^{3/2}\| L(a,a^*)\widetilde{\psi}_{\varepsilon}\|\, \|\ft\widetilde{\psi}_{\varepsilon}\|
   +\varepsilon^2\|P( a,a^*)\widetilde{\psi}_{\varepsilon}\|\,\|\ft^2\widetilde{\psi}_{\varepsilon}\|
   \le C \varepsilon\|\widetilde{\psi}_{\varepsilon}\|^2.
\]
From this, combined with \eqref{4.28} and \eqref{4.29}, follows that
\[\varkappa(\varepsilon)\|\widetilde{\psi}_{\varepsilon}\|^2=\widetilde{\mathfrak{Q}}_\varepsilon(\widetilde{\psi}_{\varepsilon})\geq (\rS_{0}-C\varepsilon)\|\widetilde{\psi}_{\varepsilon}\|^2.\]
Coming back to the operator $\cL_{\varepsilon}$ and finally to the operator $\Cr$, this ends the proof of Lemma \ref{Lfin}, and thus of Theorem \ref{1.8Intro}.
\end{proof}

\subsubsection{Computation of the ground states of $\Cr$}
In the last section of this paper, we present computations of the first eigenpair of the operator $\Cr$ for a decreasing sequence of values of $\varepsilon$. Let us agree that
\[
   \varepsilon_\ell = 2^{-1-\ell/2},\quad \ell\ge0
\]
so that $\varepsilon_0=1/2$, $\varepsilon_1=\sqrt{2}/4$, $\varepsilon_2=1/4$, etc\ldots
We have computed the first eigenpair of $\Cr$ for $\varepsilon=\varepsilon_\ell$, $\ell\in\{0,1,\ldots,12\}$ with natural boundary conditions on the domain $[-a_\ell,a_\ell]\times[-8,8]$ (with $a_\ell = 4/\varepsilon_\ell = 8\cdot2^{\ell/2}$) by a finite element discretization on a uniform rectangular grid of $48\times6$ elements of partial degree $10$ in each variable.

Theorem \ref{1.8Intro} yields the convergence of $\varkappa_1(\varepsilon)$ to $\rS_0$ at a rate of $\mathcal{O}(\varepsilon)$. Our computations (see Figures \ref{F1} and \ref{F2}) suggest that $\rS_0\simeq0.4941$. In Figure \ref{F3} we plot the difference $\varkappa_1(\varepsilon_\ell)-0.4941$ versus $\varepsilon_\ell$ (here we use a $\log-\log$ scale in base 2).

\begin{figure}[ht]
\begin{center}
    \includegraphics[width=0.7\textwidth]{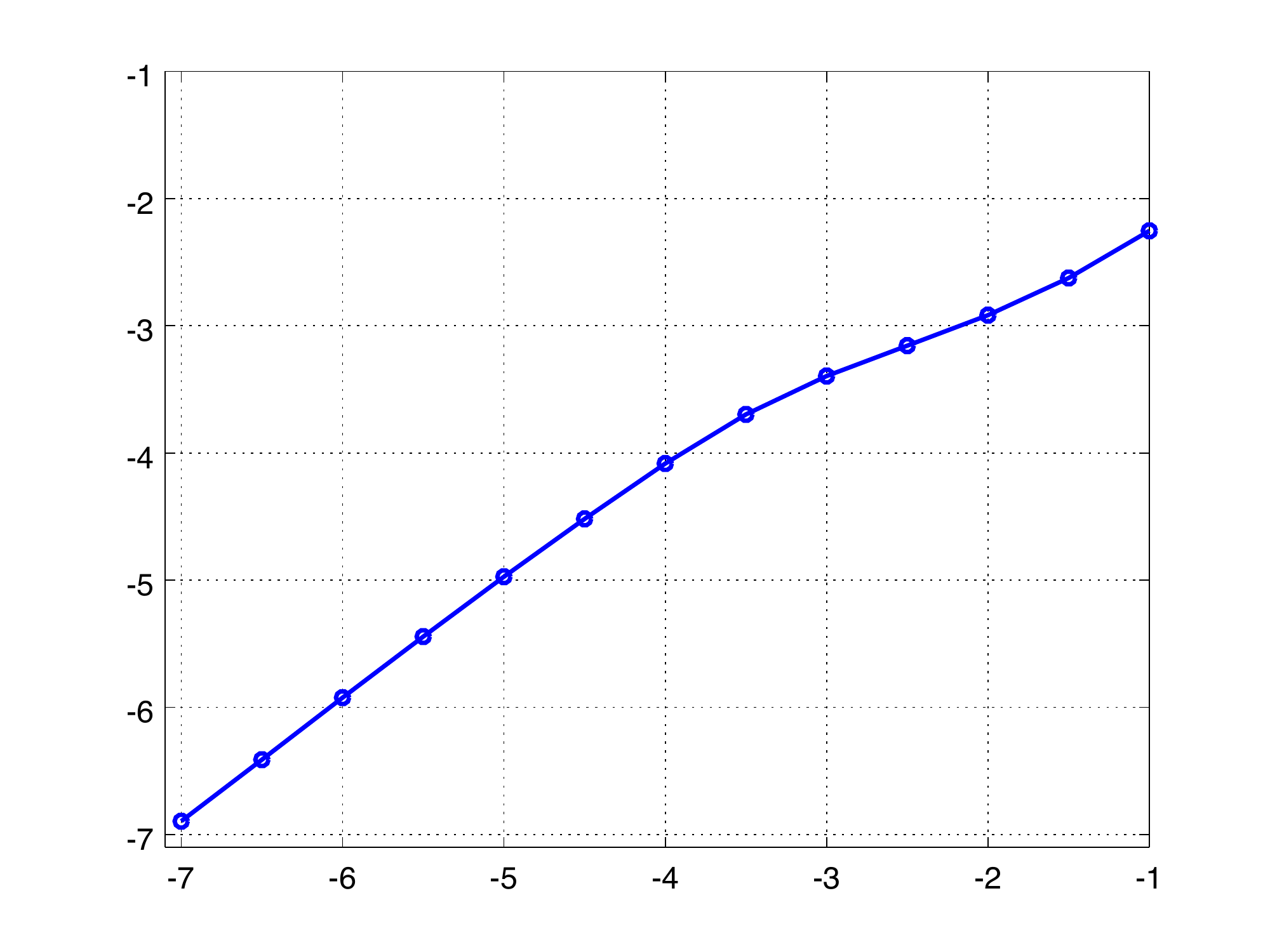}
    \end{center}
    \caption{Plot of $\log_2(\varkappa_1(\varepsilon_\ell)-0.495)$ versus $\log_2(\varepsilon_\ell)$.
    }
    \label{F3}
\end{figure}

Figure \ref{F4} gives the modulus of the first eigenvector of the operator $\Cr$ and a numerical value of the eigenvalue $\varkappa_1(\varepsilon)$, for $\varepsilon=\varepsilon_\ell$ with $\ell=0,\ldots,5$. 
For $\varepsilon=\varepsilon_\ell$, the horizontal scale (variable $\sigma$) is $[-a'_\ell,a'_\ell]$ with $a'_\ell = 5\cdot2^{\ell/2}$ and the vertical scale (variable $\tau$) is always $[-5,5]$. The proportion between the two scales is kept, which makes the plot of the vertical scale shrink as $\ell$ increases.

\begin{figure}[ht]
\captionsetup[subfloat]{labelformat=empty}
\begin{center}
\begin{tabular}{@{\hskip-5ex}c@{\hskip-4ex}c@{\hskip-4ex}c}
\subfloat[$\varkappa_1=0.7039$ \ ($\ell=0$)]{\includegraphics[width=6.4cm]{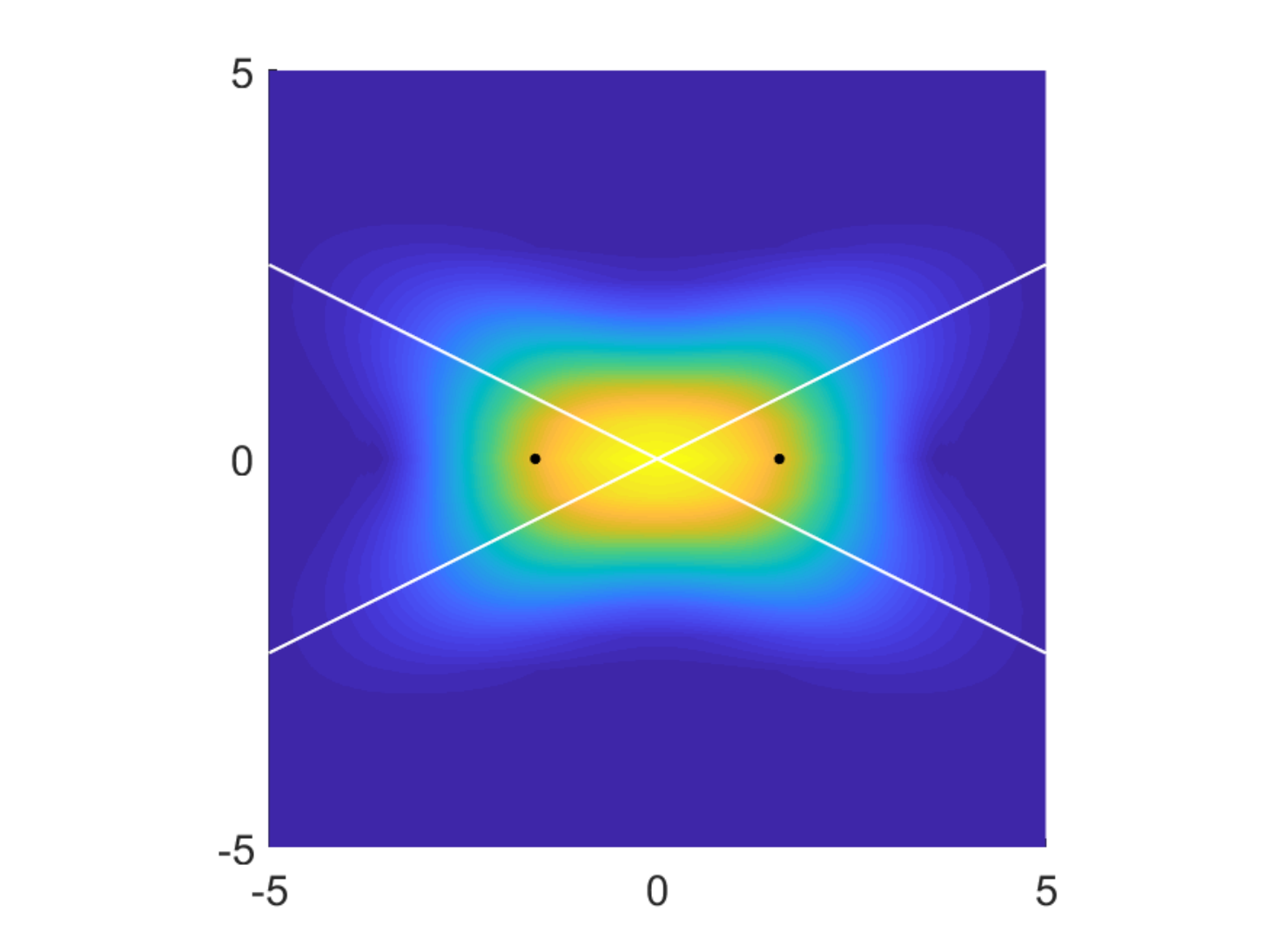}} & 
\subfloat[$\varkappa_1=0.6563$ \ ($\ell=1$)]{\includegraphics[width=6.4cm]{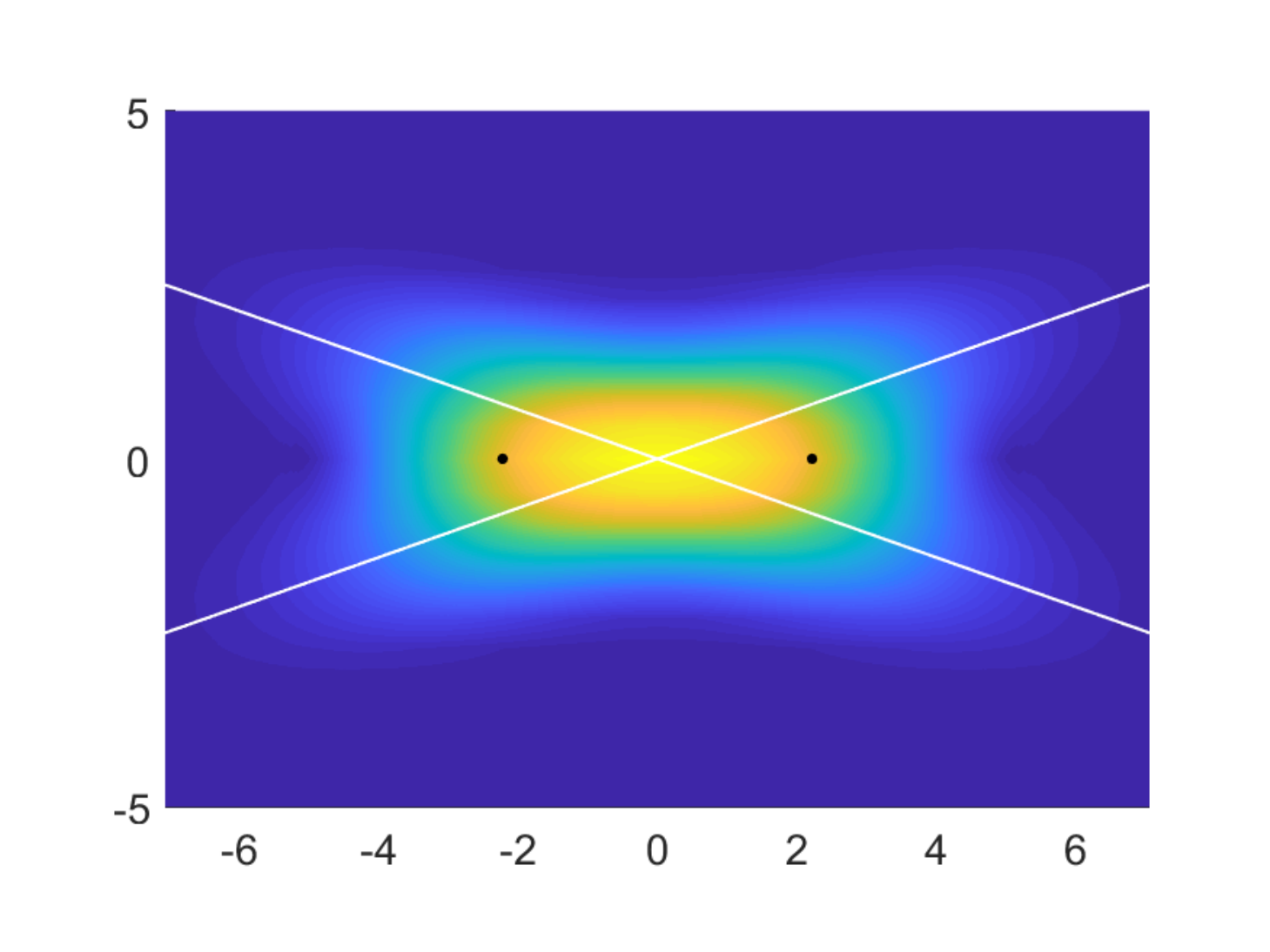}} & 
\subfloat[$\varkappa_1=0.6266$ \ ($\ell=2$)]{\includegraphics[width=6.4cm]{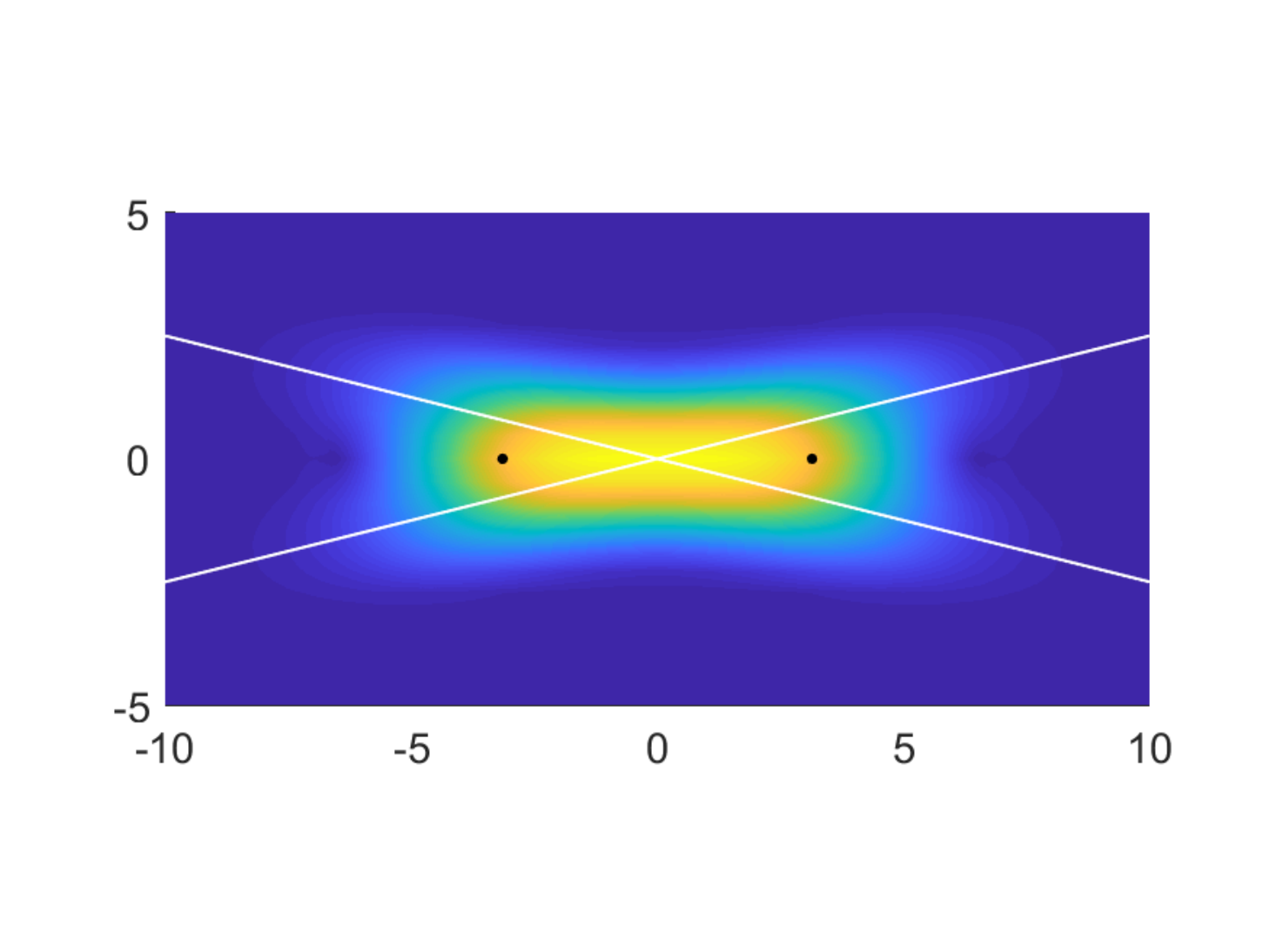}} \\ 
\subfloat[$\varkappa_1=0.6063$ \ ($\ell=3$)]{\includegraphics[width=6.4cm]{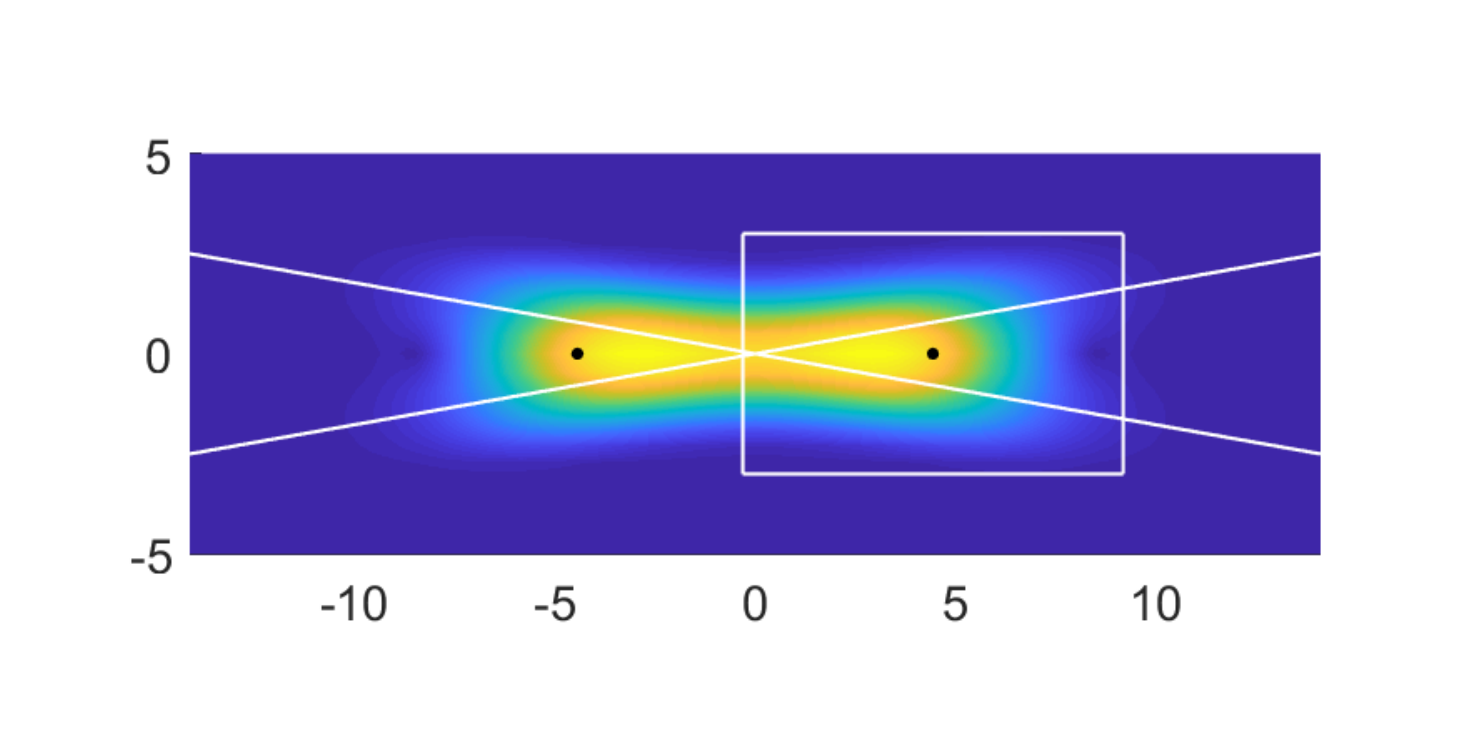}} & 
\subfloat[$\varkappa_1=0.5892$ \ ($\ell=4$)]{\includegraphics[width=6.4cm]{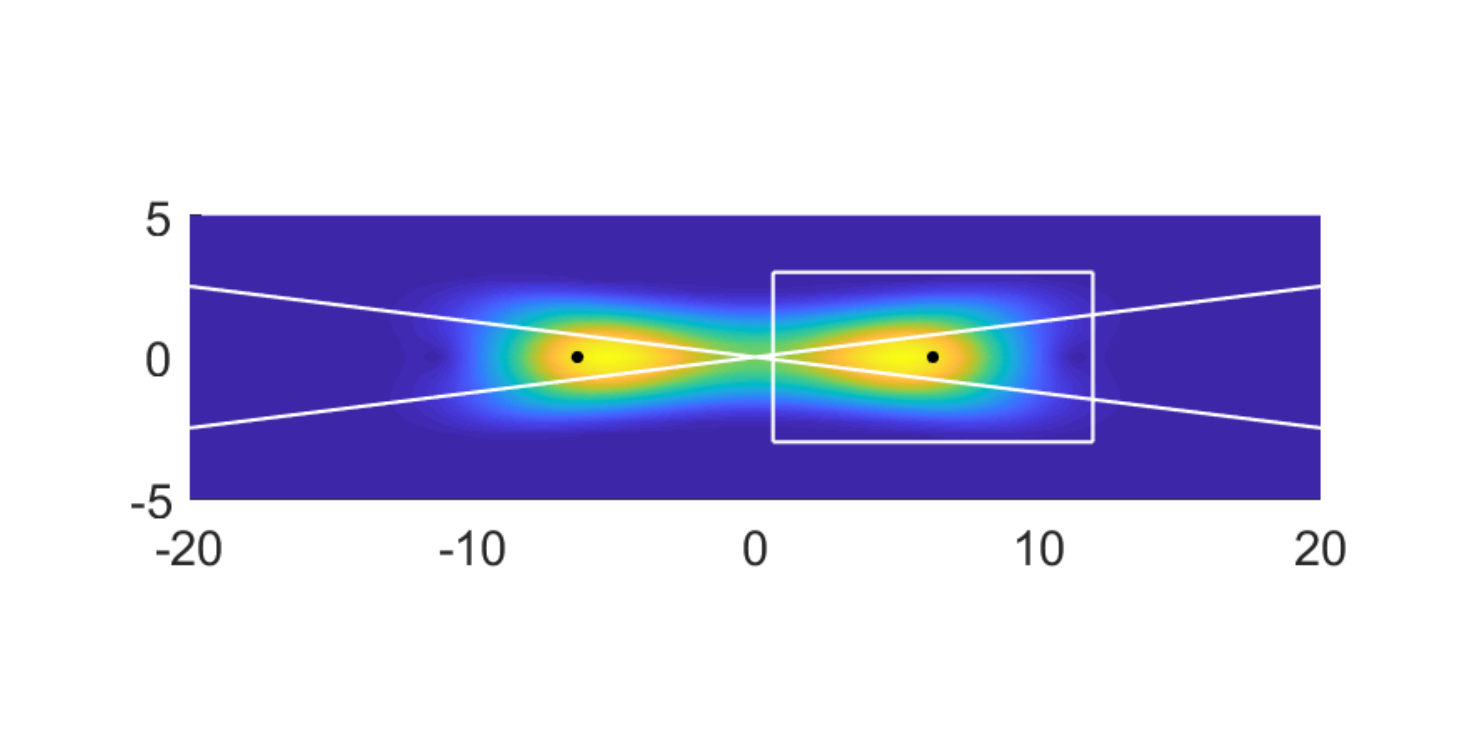}} &
\subfloat[$\varkappa_1=0.5712$ \ ($\ell=5$)]{\includegraphics[width=6.4cm]{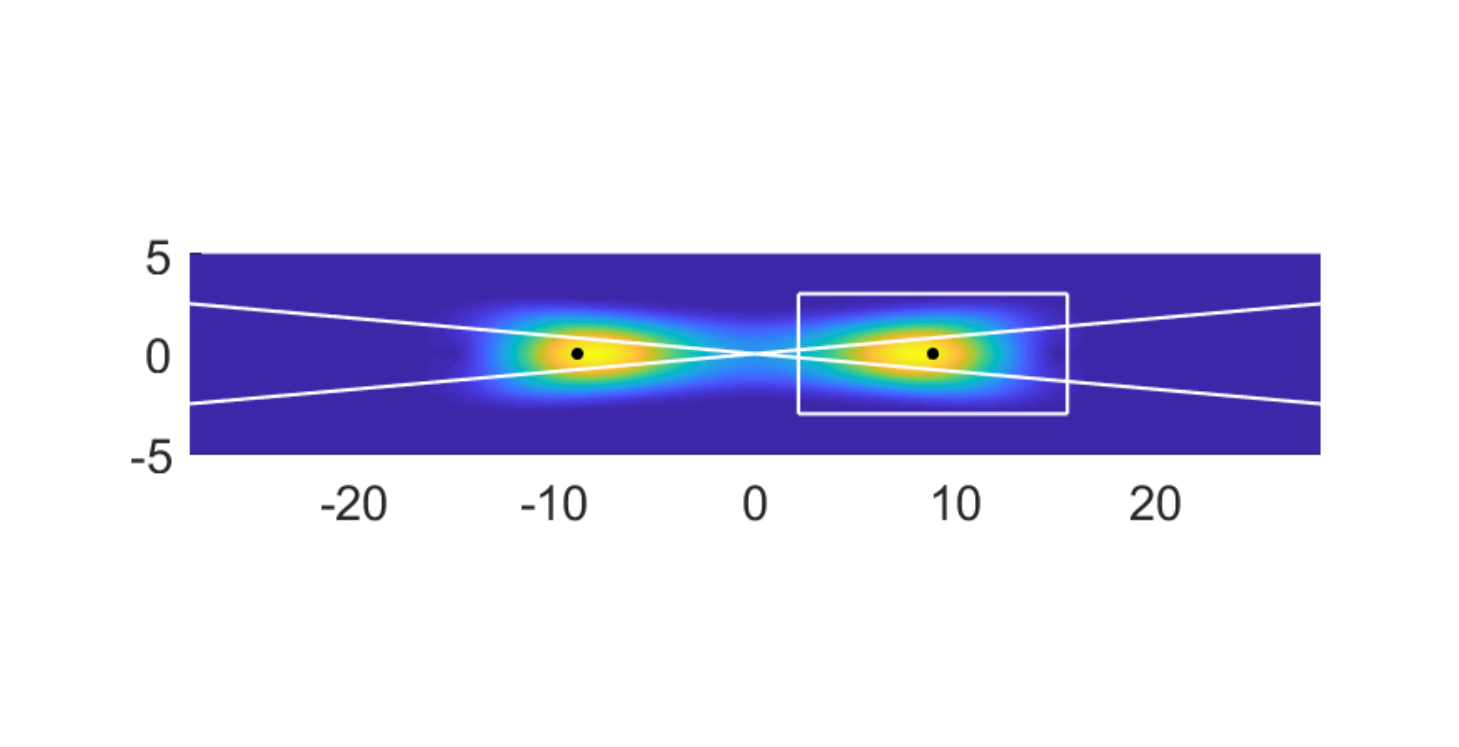}} \\ 
\end{tabular}
\end{center}
\caption{Modulus of the first eigenvector of $\mathcal{X}_{\varepsilon_\ell}$ for $\ell=0,\ldots,5$. The white cross represents the two lines of cancellation of the magnetic field and the black dots mark the two points $(\pm\alpha_0/\varepsilon_\ell,0)$.
The rectangular boxes in the second row indicate the zoom region displayed in Figure \ref{F5}.}
\label{F4}
\end{figure}

In Figure \ref{F5}, we zoom some region around the point $\alpha_0/\varepsilon$ of the plot of the first eigenvector of $\Cr$ when $\varepsilon=\varepsilon_\ell$ with $\ell=3,\ldots,10$. When $\ell=6,\ldots,10$, we have computed the eigenvector on the smaller region $[-\frac{a_\ell}{2},\frac{a_\ell}{2}]\times[-4,4]$ on a uniform rectangular grid of $48\times6$ elements of partial degree $10$ in each variable. We represent the modulus of the eigenvector on the region
\[
   \Big[\frac{\alpha_0}{\varepsilon_\ell} - \frac{2}{\sqrt{\varepsilon_\ell} } \,,\,
    \frac{\alpha_0}{\varepsilon_\ell} + \frac{2}{\sqrt{\varepsilon_\ell} }\Big]\times [-3,3]\,.
\]

\begin{figure}[ht]
\captionsetup[subfloat]{labelformat=empty}
\begin{center}
\begin{tabular}{@{\hskip-5ex}c@{\hskip-4ex}c@{\hskip-4ex}c@{\hskip-4ex}c}
\subfloat[$\varkappa_1=0.6063$ \ ($\ell=3$)]{\includegraphics[width=4.5cm]{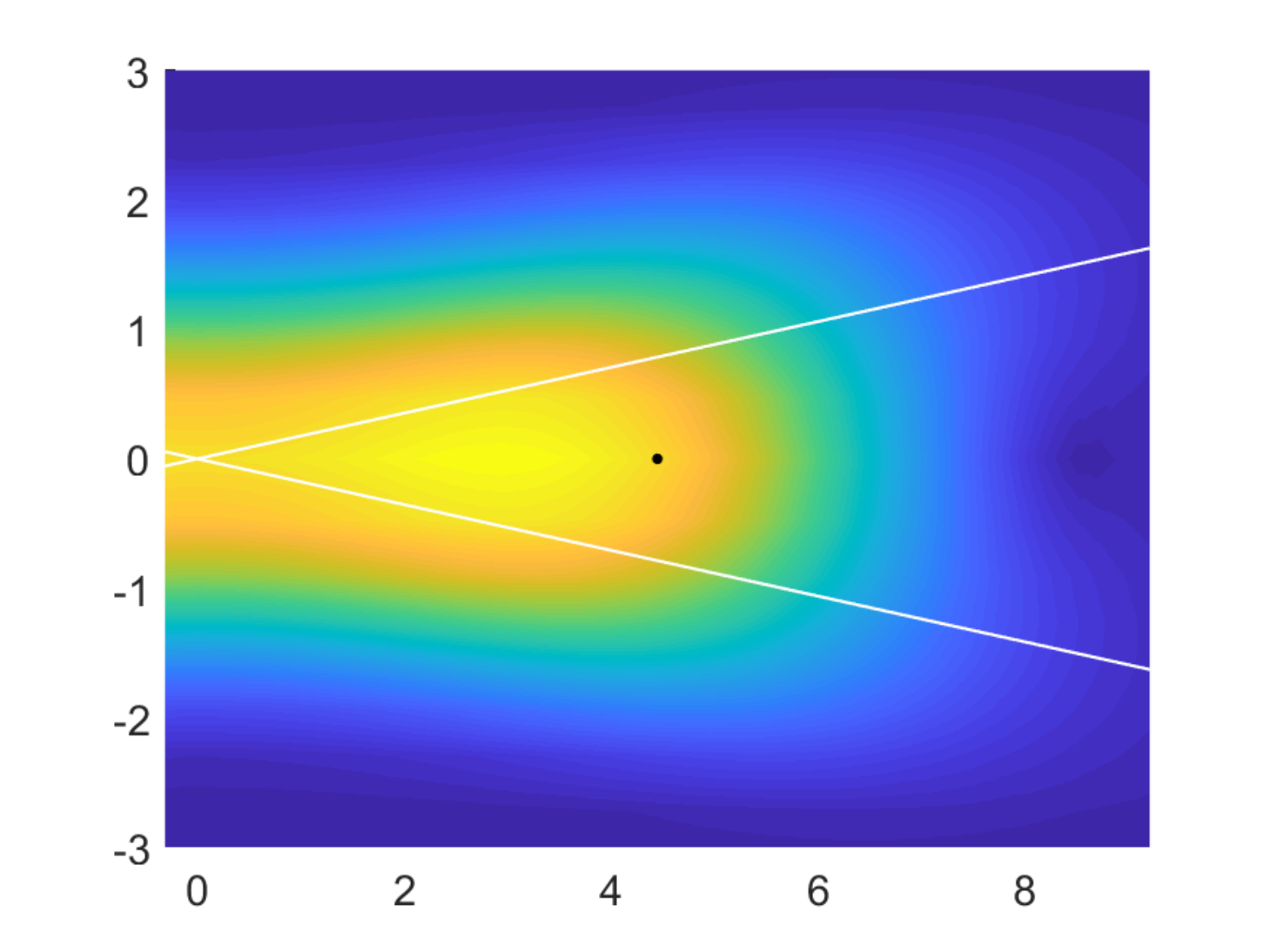}} & 
\subfloat[$\varkappa_1=0.5892$ \ ($\ell=4$)]{\includegraphics[width=4.5cm]{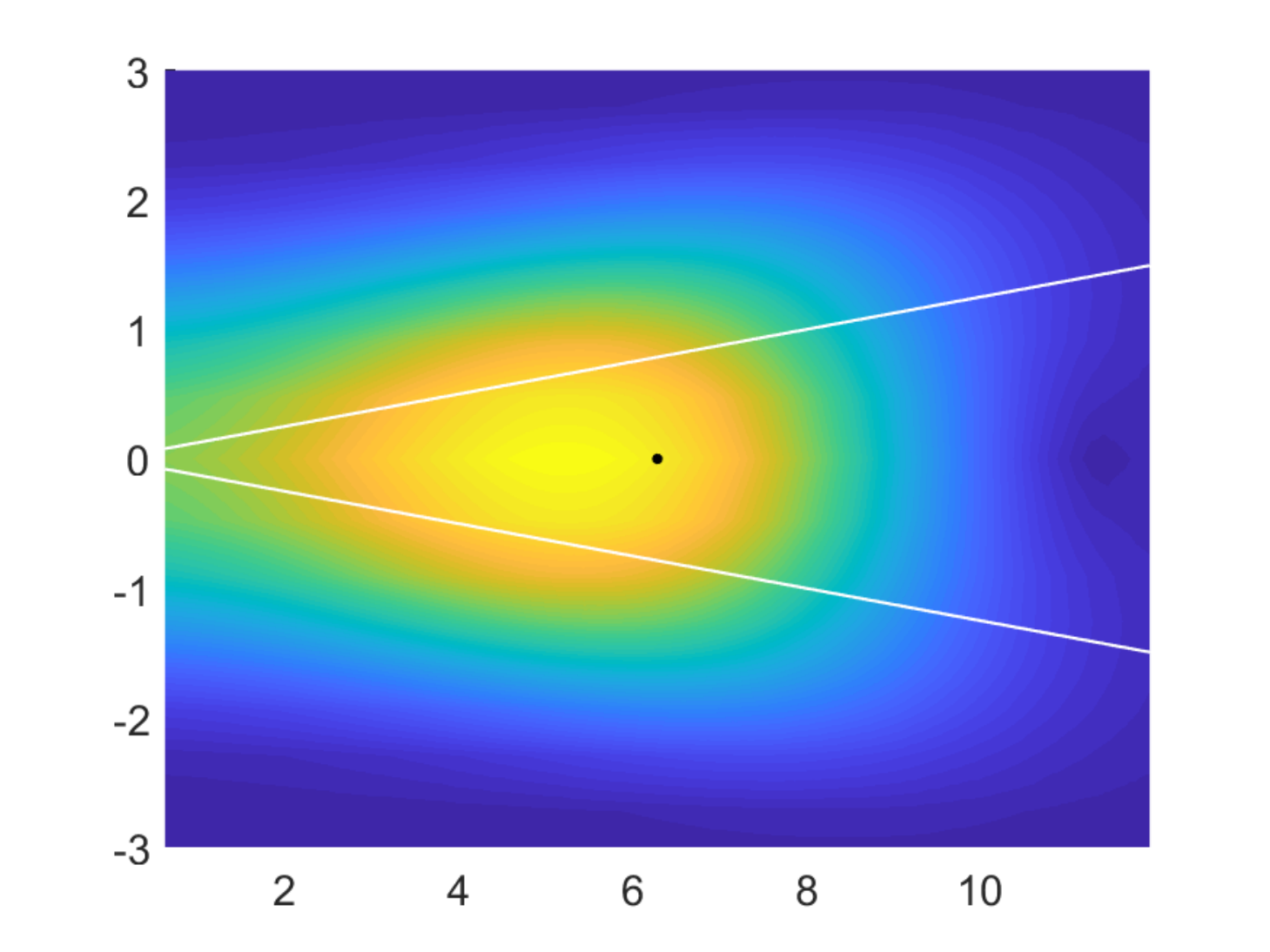}} &
\subfloat[$\varkappa_1=0.5712$ \ ($\ell=5$)]{\includegraphics[width=4.5cm]{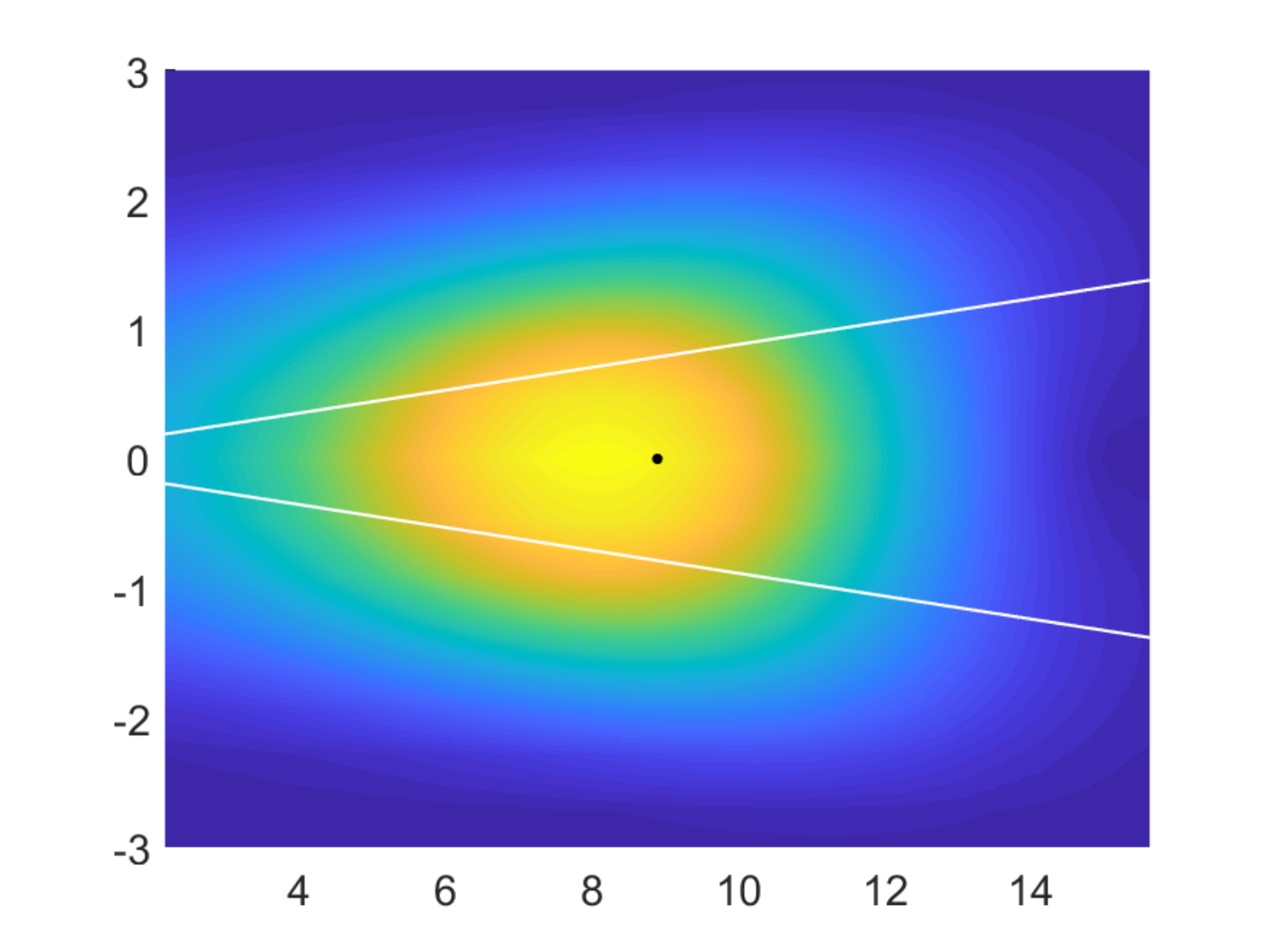}} &
\subfloat[$\varkappa_1=0.5531$ \ ($\ell=6$)]{\includegraphics[width=4.5cm]{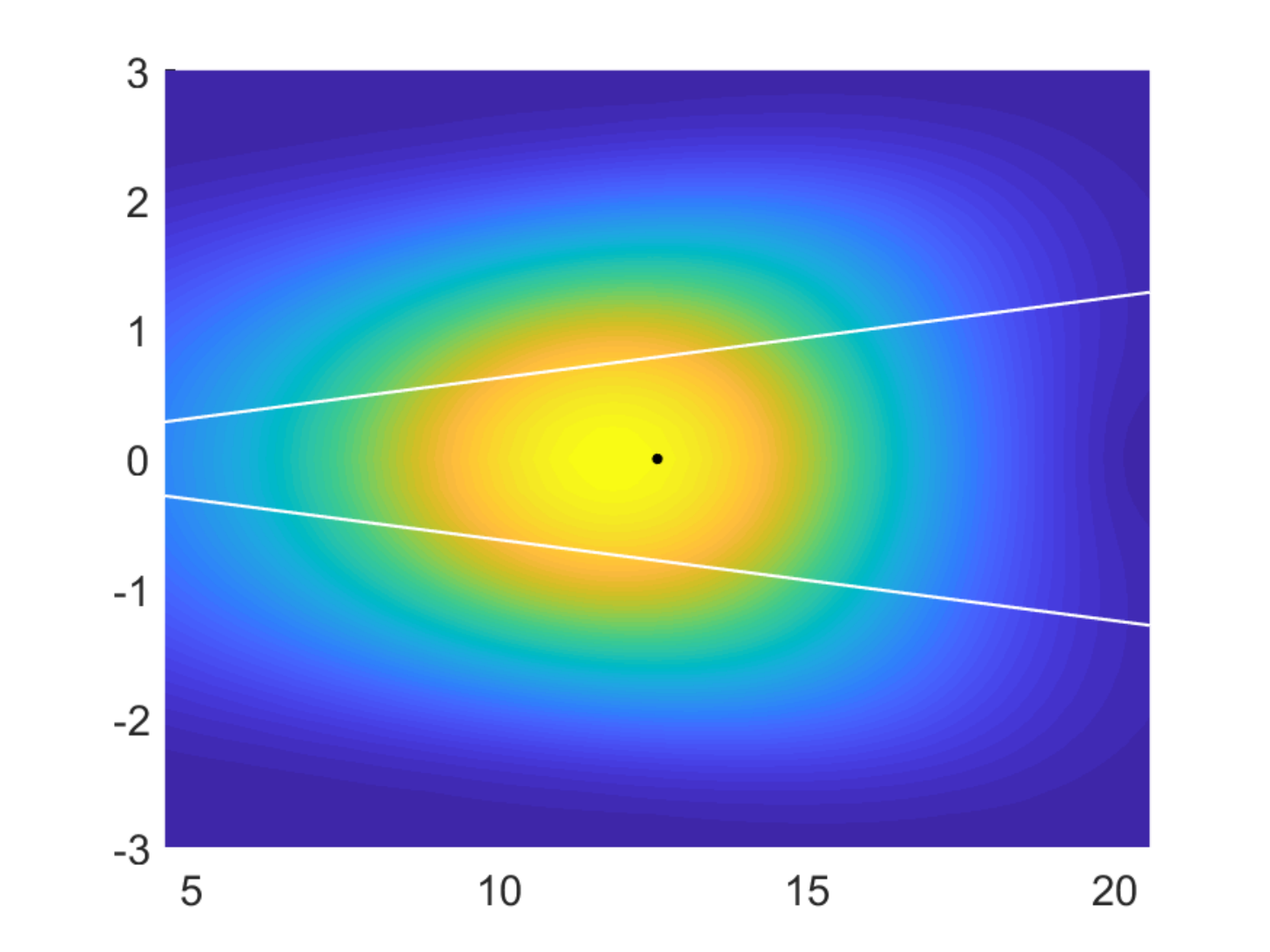}} \\
\subfloat[$\varkappa_1=0.5377$ \ ($\ell=7$)]{\includegraphics[width=4.5cm]{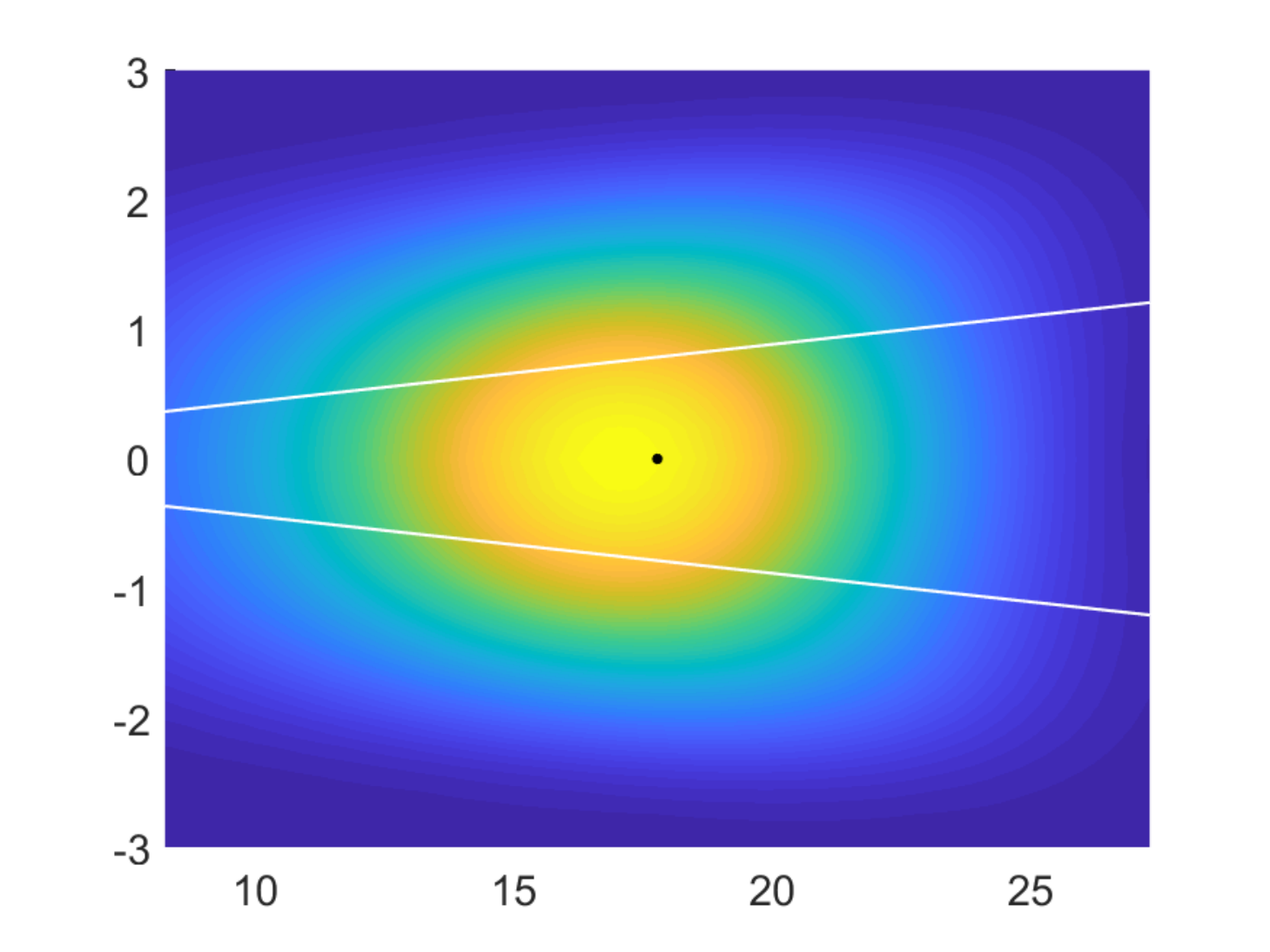}} & 
\subfloat[$\varkappa_1=0.5260$ \ ($\ell=8$)]{\includegraphics[width=4.5cm]{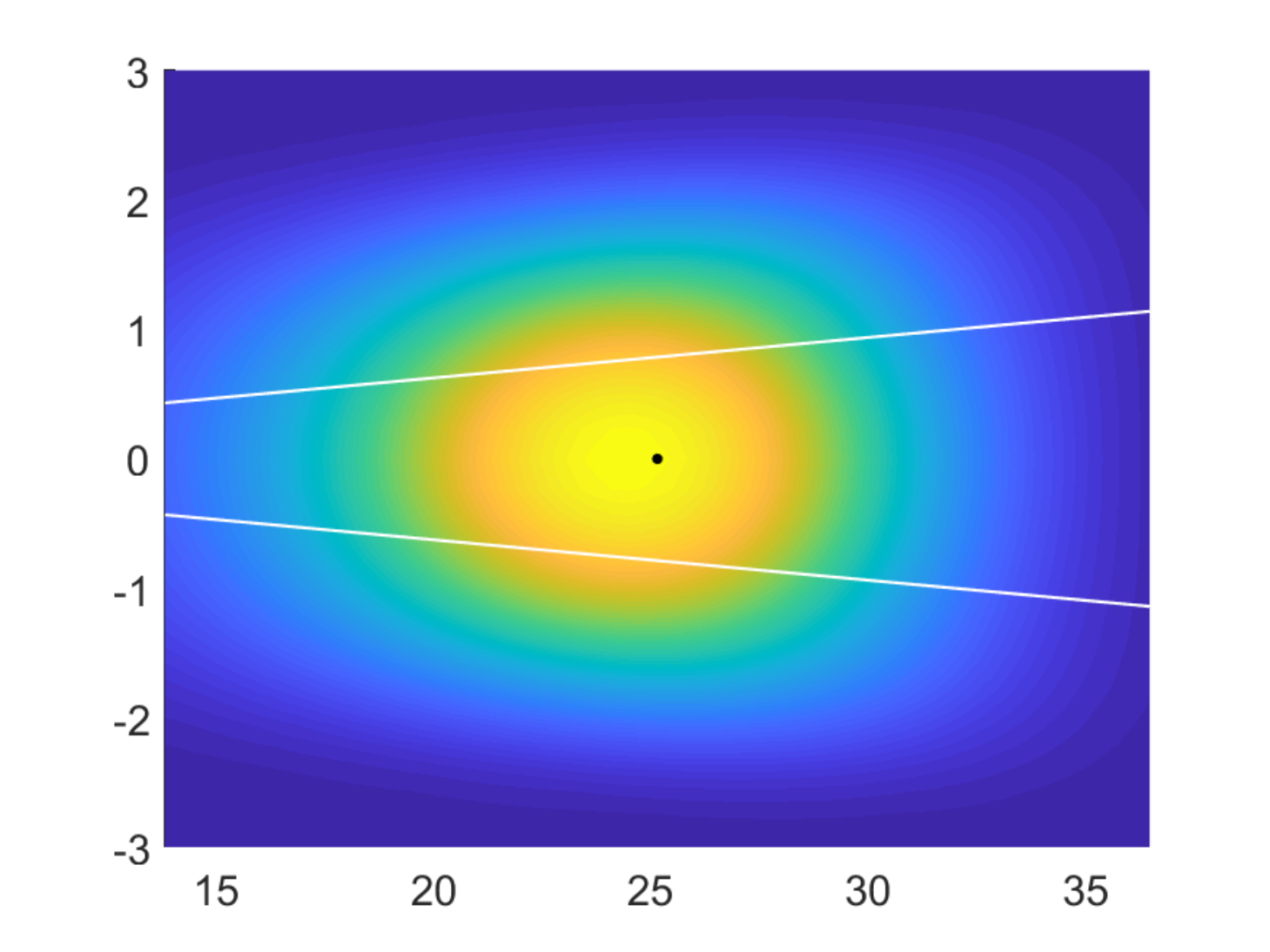}} & 
\subfloat[$\varkappa_1=0.5171$ \ ($\ell=9$)]{\includegraphics[width=4.5cm]{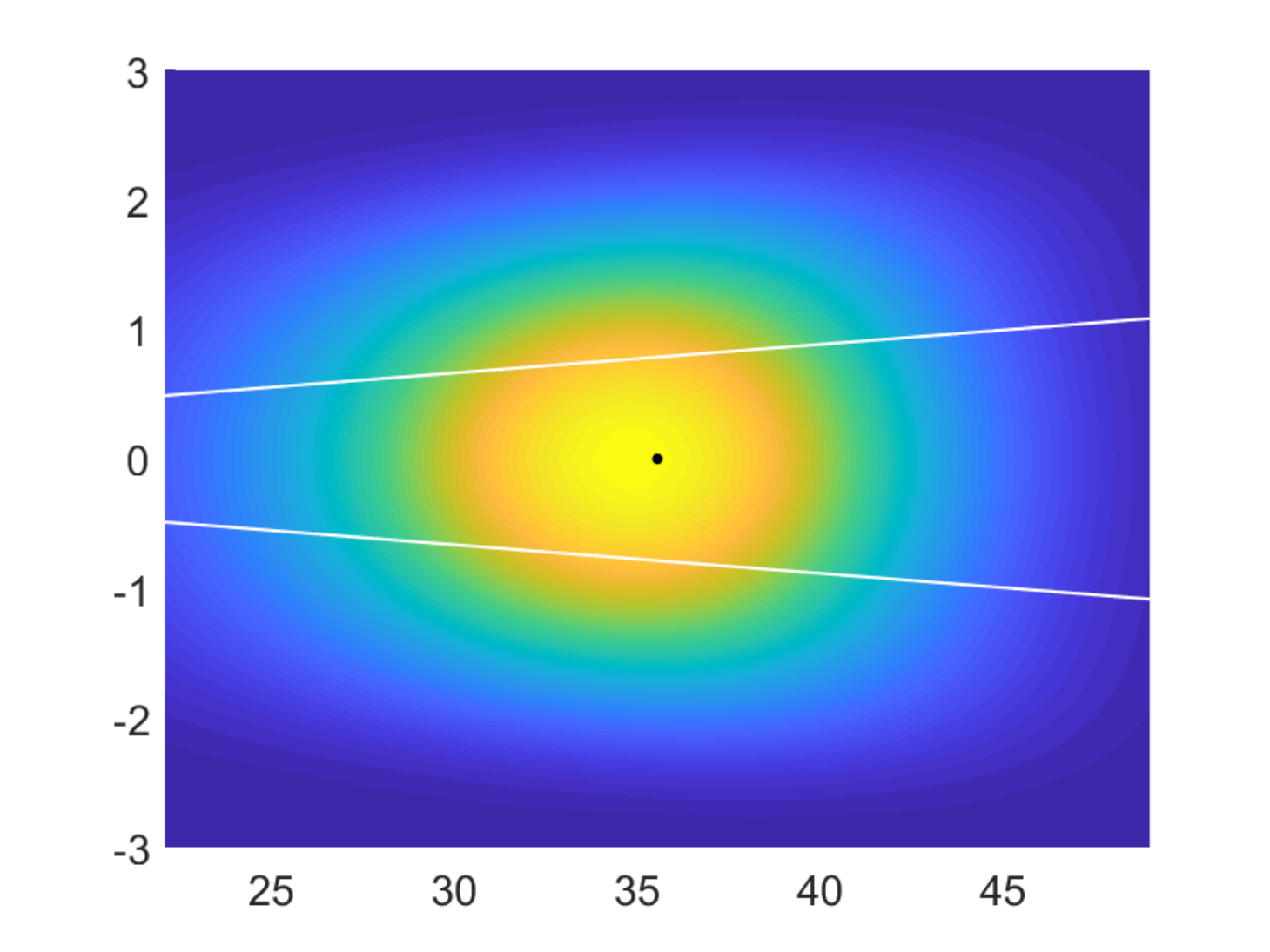}} &
\subfloat[$\varkappa_1=0.5106$ \ ($\ell=10$)]{\includegraphics[width=4.5cm]{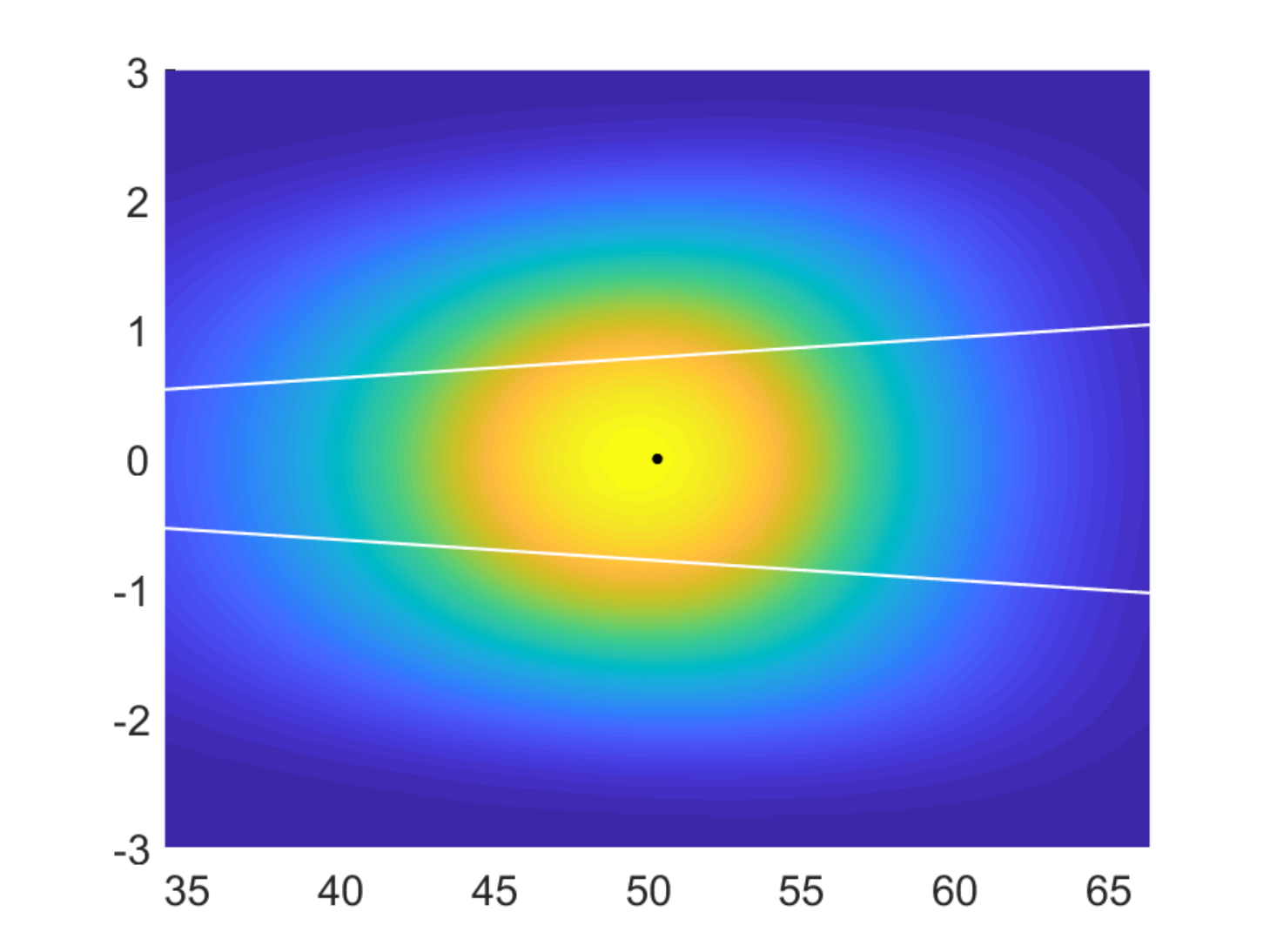}} 
\end{tabular}
\end{center}
\caption{Modulus of the first eigenvector of $\mathcal{X}_{\varepsilon_\ell}$ for $\ell=3,\ldots,10$. The white cross represents the two lines of cancellation of the magnetic field and the black dot marks the point $(\alpha_0/\varepsilon_\ell,0)$.}
\label{F5}
\end{figure}

This choice is driven by the structure of the first term $(\fs,\ft)\mapsto \psi_0(\fs,\ft) = f_0(\fs)u_0(\ft)$ \eqref{DSFchoix2croix} of a possible eigenvector asymptotics for the operator $\fL_\varepsilon$ \eqref{eq:fL}. We note that with $\xi_0=0$, we do not need any gauge transform to go from $\cL_\varepsilon$ to $\fL_\varepsilon$. Thus, using the change of variables \eqref{chgmtechelle2}, we find that
\[
   \psi_0(\fs,\ft) = f_0\Big(\frac{s-\alpha_0}{\sqrt{\varepsilon}}\Big) \,u_0(t).
\]
Going back to the ``physical variables'' $\sigma,\tau$ with \eqref{chgmtechelle}, we find
\[
   \psi_0(\fs,\ft) = f_0\Big(\frac{\varepsilon\sigma-\alpha_0}{\sqrt{\varepsilon}}\Big) \,u_0(\tau).
\]
Taking $\sigma\in [\frac{\alpha_0}{\varepsilon} - \frac{2}{\sqrt{\varepsilon} } \,,\,
    \frac{\alpha_0}{\varepsilon} + \frac{2}{\sqrt{\varepsilon} }]$, we see that $\frac{\varepsilon\sigma-\alpha_0}{\sqrt{\varepsilon}}$ spans the fixed interval $[-2,2]$. Hence we expect that the zoom can yield the image of a convergence as $\varepsilon\to0$. Indeed, this is exactly what we can detect from Figure \ref{F5}.

Thus, in the small angle limit $\varepsilon\rightarrow 0$, there are two centers of localization that are spread at the scale $\varepsilon^{-1/2}$ and go away at the order $\frac{1}{\varepsilon}$.
The ``area of localization'' of the first eigenfunctions goes to infinity. This fact is quite understandable. Considering that the limit $\varepsilon=0$ is singular (the two lines of cancellation become identical and we recover the Montgomery operator $\mathcal{M}^{[2]}$ whose essential spectrum is non-empty); it is rather natural to observe a ``loss of mass at infinity'', characteristic of Weyl sequences.


\bigskip
\end{document}